\pgfplotsset{compat=1.15}
\definecolor{white}{rgb}{1,1,1}
\definecolor{xdxdff}{rgb}{0.49019607843137253,0.49019607843137253,1}
\definecolor{ududff}{rgb}{0.30196078431372547,0.30196078431372547,1}
\definecolor{qqffff}{rgb}{0,1,1}
\definecolor{qqffqq}{rgb}{0,1,0}
\definecolor{ffqqqq}{rgb}{1,0,0}
\definecolor{qqqqff}{rgb}{0,0,1}
\definecolor{qqqqcc}{rgb}{0,0,0.8}
\definecolor{ccqqqq}{rgb}{0.8,0,0}
\definecolor{grey}{rgb}{0.4,0.4,0.4}
\definecolor{zzffzz}{rgb}{0.85,1,0.85}
\definecolor{zzffff}{rgb}{0.85,1,1}
\newtheorem{theorem}{Theorem}[section]
\theoremstyle{definition}
\newtheorem{definition}{Definition}[section]
\theoremstyle{plain}
\newtheorem{prop}[theorem]{Proposition}
\newtheorem{lemma}[theorem]{Lemma}
\newtheorem{corollary}[theorem]{Corollary}
\theoremstyle{remark}
\newcommand{\lno}{\left(}
\newcommand{\rno}{\right)}
\newcommand{\lsq}{\left[}
\newcommand{\rsq}{\right]}
\newcommand{\lcu}{\left\{}
\newcommand{\rcu}{\right\}}
\newcommand{\bbS}{\mathbb{S}}
\newcommand{\lk}{\mathrm{lk}}
\newcommand{\del}{\mathrm{del}}
\newcommand{\medsize}{\fontsize{11pt}{13pt}\selectfont}
\newcommand{\ind}{\operatorname{Ind}}
\newcommand{\caseheading}[1]{%
  \par\medskip\noindent\textbf{#1}\quad\ignorespaces
}
\let\svthefootnote\thefootnote
\newcommand\freefootnote[1]{%
  \let\thefootnote\relax%
  \footnotetext{#1}%
  \let\thefootnote\svthefootnote%
}
\title{\textbf{Independence Complexes of Hexagonal Grid Graphs}}
\author{
    Himanshu Chandrakar\thanks{Indian Institute of Technology Bhilai, \texttt{himanshuc@iitbhilai.ac.in}} 
    \and
    Anurag Singh\thanks{Indian Institute of Technology Bhilai, \texttt{anurags@iitbhilai.ac.in}}
}
\date{}
\begin{document}

\maketitle

\freefootnote{2020 \textit{Mathematics Subject Classification.} 05C69, 55U10, 05E45, 55P15, 57M15}
\freefootnote{\textit{Keywords and phrases.} Independence complex; hexagonal grid graph; homotopy type; fold lemma; link and deletion.
}

\begin{abstract}

The independence complex of a graph is a simplicial complex whose faces correspond to the independent sets of $G$. While independence complexes have been studied extensively for many graph classes, including square grid graphs, relatively little is known about planar hexagonal grid graphs.

In this article, we study the topology of the independence complexes of hexagonal grid graphs $H_{1 \times m \times n}$. For $ m=1, 2, 3$ and $n\geq 1$, we determine their homotopy types. In particular, we show that the independence complex of the hexagonal line tiling $H_{1 \times 1 \times n}$ is homotopy equivalent to a wedge of two $n$-spheres, and $m=2$ and $m=3$ we obtain recursive descriptions that completely determine the spheres appearing in the homotopy type. Our proofs rely on link and deletion operations, the fold lemma, and a detailed analysis of induced subgraphs.



\end{abstract}

\section{Introduction}
In topological combinatorics, independence complexes constitute a central and well-studied class of graph-based simplicial complexes. For a graph $G$, the independence complex $\ind \lno G \rno$ is the simplicial complex with vertex set $V \lno G \rno$, where a subset $\sigma \subseteq V \lno G \rno$ is a face of $\ind \lno G \rno$ if and only if $\sigma$ is an independent set in $G$. Despite this elementary definition, independence complexes often exhibit remarkably rich and subtle topological behaviour. Their significance is further highlighted by their close connection with matching complexes: the matching complex of a graph can be realized as the independence complex of its line graph. Consequently, many developments in the study of matching complexes draw heavily on results and techniques originating from the theory of independence complexes (see, for instance, \cite{BH17,Matsushitamatching}).

Over the years, numerous authors have contributed to the study of independence complexes for a wide variety of graph families. Notable examples include powers of cycles~\cite{ADAMASZEK20121031}, ternary graphs~\cite{jinha_kim_induced_cycles2022,ZhangWu2025Betti}, generalized Mycielskian of complete graphs and categorical products of complete graphs~\cite{Shukla2021Homotopy}, forests and claw-free graphs~\cite{engstrom_clawfree,kawmurahomotopytype}, as well as directed trees and related graph families~\cite{engstrom,KozlovDirectedTrees}. Alongside these results, several powerful methods have been developed to analyze the topology of independence complexes, including the matching tree algorithm, the star cluster method, the fold lemma, and the systematic use of link and deletion operations (see, for instance, \cite{Barmakstarclusters,bousquet2008independence,engstrom,kawmurahomotopytype,Matsushitantimes4,Matsushitantimes6}). Furthermore, in~\cite{jonsson2008simplicial}, Jonsson studies complexes of forests in a matroid-theoretic framework, leading to generalizations of independence complexes of matroids. Additionally, his work serves as a systematic reference for a wide range of simplicial complexes arising from graphs.


Among graph families with an underlying geometric structure, polygonal grid graphs have played a particularly important role. The independence complexes of square grid graphs have been studied extensively, for example by Bousquet-M{\`e}lou et al.~\cite{bousquet2008independence}, Benjamin et al.~\cite{BH17}, and Matsushita et al.~\cite{Matsushitantimes4,Matsushitantimes6}, leading to detailed descriptions of their homotopy types in several cases.

In contrast, comparatively little is known about the independence complexes of planar hexagonal grid graphs. The hexagonal tiling has a fundamentally different local structure from the square grid, with lower vertex degrees and distinct adjacency patterns, which give rise to new combinatorial and topological phenomena. Early work in this direction can be traced back to Thapper~\cite{thapper2008independence}, who studied the independence complexes of square and hexagonal grids in cylindrical settings using the matching tree algorithm. Subsequently, Jonsson~\cite{Jonsson_ind_grid} investigated the occurrence of special homology classes, known as cross cycles, in the independence complexes of periodic grids formed from squares, triangles, and hexagons.

While these works provide valuable insight into independence complexes of periodic and cylindrical grids, the corresponding complexes for \emph{planar} hexagonal grid graphs remain largely unexplored. In this article, we initiate a systematic study of this problem by determining the homotopy type of the independence complex of $H_{1\times m \times n}$ for $m = 1, 2,$ and $3$ (see \Cref{fig: graph of H1n in ind complex}, \Cref{fig: graph of H(1_2_n) in ind complex}, and \Cref{fig: graph of H(1_3_n) in ind complex}, respectively) and $n \geq 1$. The graphs $H_{1 \times m \times n}$ form a natural family of planar hexagonal grids, and the cases $m = 1, 2,$ and $3$ already exhibit distinct homotopy behaviour and much of the complexity present in wider hexagonal grids. As such, they provide a natural starting point for a systematic investigation.

We determine the homotopy types of the independence complexes for each of these cases. In particular, for $m=1$, we obtain a complete and explicit description.

\begin{theorem}[{\Cref{thm: homotopy type of H11n}}]
    For $n \geq 1$, the independence complex of the hexagonal line tiling $H_{1 \times 1 \times n}$ is homotopy equivalent to a wedge of two $n$-dimensional spheres, that is,
    $$\ind \lno H_{1 \times 1 \times n} \rno \simeq \mathbb{S}^n \vee \mathbb{S}^n.$$
\end{theorem}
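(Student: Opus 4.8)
The plan is to introduce explicit coordinates for the line tiling and then run an induction on $n$ whose single inductive step is a suspension. I would model $H_{1 \times 1 \times n}$ as the graph $G_n$ on $4n+2$ vertices consisting of a top path $u_1 u_2 \cdots u_{2n+1}$, a bottom path $v_1 v_2 \cdots v_{2n+1}$, and rungs $u_i v_i$ for every odd $i$; the $k$-th hexagon is then the $6$-cycle $u_{2k-1} u_{2k} u_{2k+1} v_{2k+1} v_{2k} v_{2k-1}$, and consecutive hexagons share exactly one rung. For $n=1$ this graph is precisely $C_6$, and since $\ind\lno C_6 \rno \simeq \mathbb{S}^1 \vee \mathbb{S}^1$ the base case holds. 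The whole theorem then reduces to the claim that $\ind\lno G_n \rno \simeq \Sigma\, \ind\lno G_{n-1} \rno$ for $n \geq 2$, since iterating yields $\ind\lno G_n \rno \simeq \Sigma^{n-1}\lno \mathbb{S}^1 \vee \mathbb{S}^1 \rno = \mathbb{S}^n \vee \mathbb{S}^n$.

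To prove the suspension claim I would apply the link--deletion decomposition at the degree-two vertex $w = u_2$, writing $\ind\lno G_n \rno$ as the mapping cone of the inclusion $\lk(u_2) = \ind\lno G_n - N[u_2] \rno \hookrightarrow \del(u_2) = \ind\lno G_n - u_2 \rno$. The deletion is the easy half: in $G_n - u_2$ the vertex $u_1$ has become a leaf whose unique neighbor is $v_1$, so the leaf-suspension lemma (an immediate consequence of the fold lemma and the link--deletion decomposition) gives $\ind\lno G_n - u_2 \rno \simeq \Sigma\, \ind\lno G_n - \{u_1,u_2,v_1,v_2\} \rno$. Deleting these four free vertices of the first hexagon leaves the induced subgraph isomorphic to $G_{n-1}$ via the index shift $u_i \mapsto u_{i-2}$, $v_i \mapsto v_{i-2}$, so that $\del(u_2) \simeq \Sigma\, \ind\lno G_{n-1} \rno$.

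The crux is to show that the link $\ind\lno G_n - N[u_2] \rno = \ind\lno G_n - \{u_1,u_2,u_3\} \rno$ is contractible for every $n$, for then the cone on a contractible subcomplex collapses and the mapping cone deformation retracts onto $\del(u_2)$, giving the suspension claim. I would establish contractibility by a short chain of leaf reductions on induced subgraphs: after deleting $u_1,u_2,u_3$ the vertex $v_1$ is a leaf on $v_2$, so the leaf-suspension lemma strips $\{v_1,v_2,v_3\}$ and leaves the chain from the rung $u_5 v_5$ onward carrying two pendant leaves, $u_4$ on $u_5$ and $v_4$ on $v_5$; a second leaf reduction at $u_5$ removes $N[u_5] = \{u_4,u_5,u_6,v_5\}$, which isolates $v_4$, and an independence complex with an isolated vertex is a cone, hence contractible, so the suspended complex and therefore the link are contractible. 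The main obstacle is exactly this uniform contractibility of the link: one must verify that the nested fold/leaf reductions terminate in an isolated vertex for all $n \geq 2$, including the degenerate small cases $n = 2, 3$ where the residual subgraph collapses to a short path. This careful tracking of induced subgraphs is the technical heart on which the recursion rests.
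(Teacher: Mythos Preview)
Your proof is correct, and it takes a genuinely different route from the paper's. The paper applies link--deletion at the \emph{corner} vertex $v_{2n+1}^1$ rather than at a degree-two path vertex, and then tracks two auxiliary families $X_n^{(1)} = H_{1\times 1\times n}\setminus\{v_{2n+1}^1\}$ and $X_n^{(2)} = H_{1\times 1\times n}\setminus N[v_{2n+1}^1]$. Via separate fold-lemma arguments it shows $\ind(X_n^{(1)})\simeq\mathbb{S}^n$ and $\ind(X_n^{(2)})\simeq\mathbb{S}^{n-1}$, and then invokes the dimension gap to conclude that the inclusion is null-homotopic, yielding $\mathbb{S}^n\vee\Sigma\mathbb{S}^{n-1}$. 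Your choice of $u_2$ is sharper for this particular family: the link becomes \emph{contractible} outright (your two leaf reductions isolate $v_4$), so no dimension comparison or auxiliary induction is needed, and the deletion collapses in one leaf step to $\Sigma\,\ind(G_{n-1})$, giving the clean suspension recursion $\ind(G_n)\simeq\Sigma\,\ind(G_{n-1})$. The trade-off is that the paper's pattern---splitting at an extremal vertex and naming the resulting induced subgraphs---is exactly the template it reuses for $m=2$ and $m=3$, where no single vertex choice makes the link contractible and one is forced to carry auxiliary families; your argument is tidier for $m=1$ but does not obviously generalise.
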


For the cases $m=2$ and $m=3$, we obtain complete descriptions of the homotopy types in terms of recursive formulas. These formulas determine both the number and the dimensions of the spheres appearing in the homotopy type.

Our computations involve a systematic analysis of several families of intermediate induced subgraphs arising from successive link and deletion operations. A detailed study of these subgraphs leads to a complete classification of the independence complexes of $H_{1 \times 2 \times n}$ and $H_{1 \times 3 \times n}$. We summarize the resulting homotopy types as follows.

\begin{enumerate}
    \item For the case $m=2$, we have the following homotopy equivalence.

    \begin{theorem}[{\Cref{thm: homotopy type of Hn2}}]
        For $n \geq 1$, the independence complex of the hexagonal grid graph $H_{1 \times 2 \times n}$ has the following homotopy type,
        $$\ind \lno H_{1 \times 2 \times n} \rno \simeq \begin{cases}
            \mathbb{S}^2 \vee \mathbb{S}^2, & \text{if } n=1;\\
            \mathbb{S}^4 \vee \mathbb{S}^4 \vee \mathbb{S}^4, & \text{if } n=2;\\
            \mathbb{S}^5 \vee \mathbb{S}^6, & \text{if } n=3;\\
            \Sigma^6 \lno \ind \lno Y_{n-3} \rno \rno \vee \Sigma^4 \lno \ind \lno Y_{n-2} \rno \rno \vee \Sigma^5 \lno \ind \lno H_{n-3}^2 \rno \rno, & \text{if } n \geq 4.
        \end{cases}$$
    \end{theorem}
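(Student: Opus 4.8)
The plan is to establish the $n\geq 4$ case by a single reduction of $\ind\lno H_{1\times 2\times n}\rno$ onto the three families $Y_{n-2}$, $Y_{n-3}$, and $H^2_{n-3}=H_{1\times 2\times (n-3)}$, and to dispose of $n=1,2,3$ by direct computation. For the base cases I would run the fold lemma on the (bounded-size) graphs to strip away dominated vertices, collapsing each complex onto an explicit wedge of spheres; this is a finite check yielding $\bbS^2\vee\bbS^2$, $\bbS^4\vee\bbS^4\vee\bbS^4$, and $\bbS^5\vee\bbS^6$. These small computations also serve to calibrate the local suspension counts that reappear in the general step.

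The engine for $n\geq 4$ is the link--deletion cofiber sequence attached to a vertex $v$: writing $\lk_{\ind\lno G\rno}(v)=\ind\lno G\setminus N[v]\rno$ and $\del_{\ind\lno G\rno}(v)=\ind\lno G\setminus v\rno$, one has $\ind\lno G\rno\simeq \ind\lno G\setminus v\rno\cup_{\lk(v)}\cone\lno \lk(v)\rno$, the mapping cone of the inclusion of the link into the deletion. Whenever this inclusion is null-homotopic the cone splits as
$$\ind\lno G\rno \simeq \ind\lno G\setminus v\rno \vee \Sigma\,\ind\lno G\setminus N[v]\rno .$$
A repeatedly used special case is the leaf reduction: if $v$ is a leaf with neighbour $u$, then deleting $u$ isolates $v$ and makes $\ind\lno G\setminus u\rno$ a cone, whence $\ind\lno G\rno\simeq \Sigma\,\ind\lno G\setminus N[u]\rno$, contributing exactly one suspension per peeled leaf.

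Concretely, I would fix $v$ in the terminal hexagon of the $2\times n$ strip and peel vertices off the last one or two columns in a fixed order, alternating fold-lemma removals of dominated boundary vertices with link--deletion splits. Each split either separates off a contractible piece (advancing the suspension count) or produces a wedge summand whose residual link/deletion subgraph I identify, up to isomorphism, with a member of the $Y$-family or with the shorter grid $H^2_{n-3}$. Accumulating the suspensions along each branch should yield the three summands $\Sigma^6\ind\lno Y_{n-3}\rno$, $\Sigma^4\ind\lno Y_{n-2}\rno$, and $\Sigma^5\ind\lno H^2_{n-3}\rno$, with the recursive or explicit control of $\ind\lno Y_k\rno$ supplied by the auxiliary lemmas proved earlier.

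The main obstacle is bookkeeping rather than a single deep idea. After each link and deletion I must correctly identify the resulting induced subgraph up to isomorphism, exhibit at every stage the dominated vertex that licenses the fold lemma, and verify that each link-into-deletion inclusion is null-homotopic so that every cofiber sequence in the chain genuinely becomes a wedge. The delicate point is that the suspension exponents $4$, $5$, $6$ are pinned down entirely by the number and order of the peeling steps along each branch, so a single miscounted reduction would corrupt the shifts; I would therefore organise the argument as a careful case analysis on the local structure of the terminal two columns, keeping the three branches synchronised with one fixed sequence of vertex choices.
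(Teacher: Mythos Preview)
Your overall strategy matches the paper's: apply link--deletion at a vertex in the terminal column, simplify each branch with the fold lemma, and identify the residual graphs with $Y_{n-2}$, $Y_{n-3}$, and $H^2_{n-3}$. However, two points in your sketch underestimate what is actually required.

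First, a \emph{single} link--deletion split does not produce all three summands. The paper splits at $v_{2n}^2$: the link folds down to $\Sigma^4\ind\lno H^2_{n-3}\rno$, but the deletion $T_n^{(1)}=H_n^2\setminus\{v_{2n}^2\}$ does not fold directly to anything in your target list. A \emph{second} link--deletion inside $T_n^{(1)}$ (at $v_{2n-1}^3$) is needed, and only then do the two remaining branches fold to $\Sigma^3\ind\lno Y_{n-2}\rno$ and $\Sigma^6\ind\lno Y_{n-3}\rno$. So the argument is a nested two-level cofiber decomposition, not a single split plus peeling.

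Second, and more seriously, you say the null-homotopy of each link-into-deletion inclusion is ``bookkeeping,'' but it is not. At both levels, neither the link nor the deletion is contractible, and you do not yet know their homotopy types inductively (the recursion for $Y_k$ and $H_k^2$ is what you are trying to establish), so a dimension comparison is unavailable. The paper's device is to exhibit, for each inclusion $\ind(\text{link})\hookrightarrow\ind(\text{del})$, an \emph{intermediate} induced subgraph $G'$ with $\text{link graph}\subset G'\subset\text{del graph}$ such that after folding $G'$ acquires an isolated vertex, forcing $\ind\lno G'\rno$ to be contractible; the inclusion then factors through a point (\Cref{prop: null-homotopy through factor}). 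Choosing these intermediates ($T_n^{(4)}$ and $T_n^{(5)}$ in the paper) is the genuine content, and your proposal does not supply it.
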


    Here, $Y_n$ (see \Cref{fig: graph of Yn in ind complex}) is an induced subgraph of $H_{1 \times 2 \times n}$, and the topology of its independence complex is computed in \Cref{thm: homotopy type of Yn in Hn2}.

    \item For the case $m = 3$, we obtain the following result.

    \begin{theorem}[{\Cref{thm: Homotopy type of ind complex of Hn3}}]
        For $n\geq 1$, the independence complex of the hexagonal grid graph $H_{1 \times 3 \times n}$ has the following homotopy type,
        $$\ind \lno H_{1 \times 3 \times n} \rno \simeq \begin{cases}
            \mathbb{S}^3 \vee \mathbb{S}^3, & \text{if } n=1;\\
            \mathbb{S}^4 \vee \mathbb{S}^4 \vee \mathbb{S}^4, & \text{if } n=2;\\
            \mathbb{S}^7, & \text{if } n=3;\\
            \mathbb{S}^6 \vee \mathbb{S}^{10} \vee \mathbb{S}^{10} \vee \mathbb{S}^{10} \vee \mathbb{S}^{10}, & \text{if } n=4;\\
            \Sigma^9 \lno \ind \lno Z_{n-3}^{(3)} \rno \rno \vee \Sigma^4 \lno \ind \lno Z_{n-2}^{(2)} \rno \rno \vee \Sigma^5 \lno \ind \lno Z_{n-2}^{(1)} \rno \rno, & \text{if } n \geq 5.
        \end{cases}$$
    \end{theorem}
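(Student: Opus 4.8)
The plan is to run the standard link--deletion recursion for independence complexes, built around the cofiber sequence
$$\ind \lno G \setminus N[v] \rno \hookrightarrow \ind \lno G \setminus v \rno \longrightarrow \ind \lno G \rno,$$
which exhibits $\ind(G)$ as the homotopy cofiber (mapping cone) of the inclusion of the link $\lk(v)=\ind(G\setminus N[v])$ into the deletion $\del(v)=\ind(G\setminus v)$, together with the fold lemma to discard dominated vertices. First I would fix the planar coordinate labelling of the vertices of $H_{1\times 3\times n}$ from \Cref{fig: graph of H(1_3_n) in ind complex}, so that the three rows and $n$ columns of hexagons are indexed consistently, and isolate the low-degree boundary vertices at the free (say rightmost) end; these are the vertices on which the recursion will act. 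Since the target formula reduces $H_{1\times 3\times n}$ directly to the $Z$-families rather than to $H_{1\times 3\times(n-1)}$, the work for this theorem is a single one-column reduction followed by invocation of the recursive descriptions already established for $\ind(Z^{(1)}_{\bullet})$, $\ind(Z^{(2)}_{\bullet})$, and $\ind(Z^{(3)}_{\bullet})$.

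For the base cases $n=1,2,3,4$ I would compute directly: repeated application of the fold lemma collapses each small graph to a controlled core, and the link--deletion splitting then reads off the stated wedges; the fact that $n=3$ yields a single sphere $\mathbb{S}^{7}$ (and $n=4$ the irregular wedge $\mathbb{S}^{6}\vee\mathbb{S}^{10}\vee\mathbb{S}^{10}\vee\mathbb{S}^{10}\vee\mathbb{S}^{10}$) reflects degenerate collapsing that does not persist once the grid is long enough. For $n\ge 5$ the goal is the reduction
$$\ind \lno H_{1\times 3\times n}\rno \simeq \Sigma^{9} \ind \lno Z^{(3)}_{n-3}\rno \vee \Sigma^{4} \ind \lno Z^{(2)}_{n-2}\rno \vee \Sigma^{5} \ind \lno Z^{(1)}_{n-2}\rno.$$
I would obtain this by peeling the rightmost hexagonal column off one vertex at a time: at each step I choose a vertex $v$ whose deletion complex is made contractible by a fold (often a degree-$2$ vertex adjacent to one whose neighbourhood dominates it), so that the cofiber sequence degenerates to $\ind(G)\simeq\Sigma\,\ind(G\setminus N[v])$, raising the suspension degree by one and excising $N[v]$ from the graph. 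This is continued until a distinguished branching vertex is reached at which both $\del(v)$ and $\lk(v)$ are essential; there the inclusion $\lk(v)\hookrightarrow\del(v)$ is null-homotopic, so the sequence splits as a genuine wedge $\ind(G\setminus v)\vee\Sigma\,\ind(G\setminus N[v])$. Iterating this peeling along the three resulting strands produces exactly the three surviving summands, which I would identify by inspection of the residual induced subgraphs with $Z^{(3)}_{n-3}$, $Z^{(2)}_{n-2}$, and $Z^{(1)}_{n-2}$, accumulating the suspension exponents $9$, $4$, and $5$ as the bookkeeping of removed neighbourhoods dictates.

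The main obstacle is the justification of the wedge splitting at every stage. For each chosen vertex I must verify either that a fold genuinely applies after the previous removals (so that the deletion complex is contractible and a clean suspension splits off) or that the link inclusion is null-homotopic; only then does the homotopy cofiber degenerate to a suspension or a wedge rather than leaving a nontrivial attaching map that would spoil the sphere decomposition. Because hexagonal grids mix degree-$2$ and degree-$3$ vertices, the closed neighbourhoods $N[v]$ removed at successive steps are irregular, and the delicate point is confirming that each residual induced subgraph is \emph{precisely} a member of the $Z$-families with the claimed index shift ($n-3$ for the first branch, $n-2$ for the other two) and not some exceptional small graph. This is exactly the step for which the detailed analysis of induced subgraphs is required, and it is also where the hypothesis $n\ge 5$ enters: it is what guarantees the generic, non-degenerate behaviour of the peeling that distinguishes the recursive formula from the separately computed cases $n\le 4$. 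Matching the exponents $9,4,5$ to the number and type of vertices eliminated along each strand, and checking that the three strands are mutually consistent with the known $Z$-family recursions, completes the argument.
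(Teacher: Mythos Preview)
Your overall framework---link/deletion plus folds, with wedge splittings coming from null-homotopic inclusions---matches the paper's, but two concrete mechanisms are misidentified, and these are exactly where the work lies.

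First, the suspension exponents $9$, $4$, $5$ do not arise by iterating ``deletion contractible $\Rightarrow$ $\ind(G)\simeq\Sigma\,\lk(v)$'' as you describe. Following the pattern of the $m=2$ case (\Cref{lemma: link of b(2t) in Hn2}, \Cref{lemma: deletion of v(2n)2 in Hn2}), the paper chooses one or two well-placed vertices, then applies a sequence of folds to the resulting link and deletion graphs until each becomes a disjoint union of the form $(\text{smaller graph})\sqcup P_2\sqcup\cdots\sqcup P_2$; the $k$ copies of $P_2$ contribute $\Sigma^k$ via $\ind(G\sqcup P_2)\simeq\Sigma\,\ind(G)$. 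The exponents are thus bookkeeping of disjoint $P_2$ components left by folds, not of iterated neighbourhood removals. Your peeling picture would need, at each of nine successive steps for the first branch, a vertex whose deletion is contractible; this does not happen in the hexagonal grid, where deletions of degree-$2$ boundary vertices generally remain complicated.

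Second, and more seriously, you assert that at the branching vertex the inclusion $\lk(v)\hookrightarrow\del(v)$ is null-homotopic without giving any mechanism. The paper's device (\Cref{prop: null-homotopy through factor}) is specific: one exhibits an intermediate induced subgraph $G'$ with $G\setminus N[v]\subseteq G'\subseteq G\setminus\{v\}$ such that $\ind(G')$ is contractible, typically because a fold sequence on $G'$ leaves an isolated vertex. The inclusion then factors through a point. Finding the correct $G'$ and verifying that an isolated vertex appears after folding is the genuine combinatorial content; for $m=3$ this has to be done at nested levels (the deletion itself requires a further link/deletion step to produce the three-term wedge, as in the $m=2$ analysis), and your proposal does not address it. Without this, a nontrivial attaching map cannot be excluded and the wedge decomposition does not follow.
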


    Here, $Z_{n}^{(1)}$ (see \Cref{fig: graph of Zn1 in ind complex}), $Z_{n}^{(2)}$ (see \Cref{fig: graph of Zn2 in ind complex}), and $Z_{n}^{(3)}$ (see \Cref{fig: graph of Zn3 in ind complex}) are induced subgraphs of $H_{1\times 3 \times n}$. Furthermore, the homotopy types of their independence complexes are given in \Cref{thm: Homotopy type of ind complex of Zn1}, \Cref{thm: Homotopy type of ind complex of Zn2}, and \Cref{thm: Homotopy type of ind complex of Zn3}, respectively.

    \end{enumerate}

\caseheading{Organization of the article}

In the following section, we review basic notions from graph theory and simplicial complexes, as well as the results required throughout the article. We also formally define the hexagonal grid graphs. The remainder of the article is organized into three main sections, each devoted to computing the homotopy type of the independence complex of $H_{1\times m \times n}$ for $m = 1, 2,$ and $3$, respectively.

\section{Preliminaries}\label{section: 2 (Preliminaries)}

In this section, we introduce the key concepts and results necessary for discussing the article.

\subsection{Basics of graph theory}

A \textit{\textbf{graph}} $G$ is an ordered pair $\lno V(G), E(G) \rno$, where $V(G)$ is the set of vertices and $$E(G) \subseteq \lcu \{u,v\} \mid u,v \in V(G),\ u \neq v \rcu$$
is the set of edges. We define the \textbf{\textit{(open) neighborhood}} of the vertex $v$ in $G$, denoted by $N_G(v)$ as, $$N_G \lno v \rno = \lcu u \in V(G)\ \middle|\ u\neq v\ \text{and}\ \left\{ u,v \right\} \in E \lno G \rno \rcu.$$ Similarly, the \textbf{\textit{closed neighborhood}} of the vertex $v$ in $G$, denoted by $N_G[v]$ is defined as, $$N_G[v] = N_G(v) \sqcup \lcu v \rcu.$$ When the underlying graph is clear from the context, we omit the subscript and write $N(v)$ and $N[v]$ to denote the open and closed neighborhoods of $v$, respectively.

Let $S$ be a subset of $V(G)$. The \textit{\textbf{induced subgraph}} of $G$ on $S$, denoted by $G\lsq S \rsq$, is the graph whose vertex set is $S$ and whose edge set consists of all edges of $G$ that have both vertices in $S$. We denote the induced
subgraph $G\lsq V\lno G \rno \setminus H \rsq$ by $G \setminus H$, where $H \subset V\lno G \rno$. We now proceed to formally define the hexagonal grid graphs. 

\subsection{Labelling of vertices in the general hexagonal grid graph.}

To understand the topology of the independence complex of hexagonal grid graphs, our computations will largely depend on how their vertices are labeled. For integers $l,m,n \geq 1$, the hexagonal grid graph $H_{l \times m \times n}$ is the finite subgraph of the hexagonal tiling corresponding to a hexagon-shaped region with three pairs of opposite sides of lengths $l$, $m$, and $n$, counted in numbers of hexagons.

For this article, we consider $l=1$ and $m \in \lcu 1, 2, 3 \rcu$ and $n \geq 1$. The $\lno 1 \times m \times n\rno$-hexagonal grid graph, denoted by $H_{1\times m \times n}$, is defined as the graph with vertex set $V\lno H_{1\times m \times n} \rno$ and $E\lno H_{1\times m \times n} \rno$, where
\begin{align*}
    V\lno H_{1\times m \times n} \rno &\simeq \bigsqcup_{i=1}^{m+1} V_i^n;\\
    E\lno H_{1\times m \times n} \rno &\simeq \lno \bigsqcup_{j=1}^{m+1} A_j \rno \sqcup \lno \bigsqcup_{k=1}^{m} B_k \rno .
\end{align*}

Here, for $i = 1, 2, \dots, m+1$, $$V_i^n = \begin{cases}
    \lcu v_1^i, v_2^i, \dots, v_{2n+1}^i\rcu, & i = 1 \text{ or } m+1;\\
    \lcu v_1^i, v_2^i, \dots, v_{2n+1}^i, v_{2n+2}^i\rcu, & \text{otherwise}.
\end{cases}$$

For $j=1,2,\dots, m+1$, 
\begin{align*}
    A_{j} &= \begin{cases}
        \lcu \lcu v_r^j, v_{r+1}^j \rcu\ \middle|\ r \in [2n] \rcu, & j = 1 \text{ or } m+1; \\
        \lcu \lcu v_s^j, v_{s+1}^j \rcu\ \middle|\ s \in [2n+1] \rcu, & \text{otherwise}.
    \end{cases}
\end{align*}

Furthermore, for $m \geq 1$, and $k = 1,2,\dots,m$,
\begin{align*}
    B_{k} &= \begin{cases}
            \lcu \lcu v_{2t-1}^{k}, v_{2t}^{k+1} \rcu\ \middle|\ t \in \lsq n+1 \rsq \rcu, & \text{if } k=1, 2, \dots, m-1;\\
            \lcu \lcu v_{2t-1}^{m}, v_{2t-1}^{m+1} \rcu\ \middle|\ t \in \lsq n+1 \rsq \rcu, & \text{if } k = m.
    \end{cases}
\end{align*}

For example, the graph $H_{1\times 4 \times 5}$ is shown in (see \Cref{fig: graph of H(1_4_6) in ind complex}).

\begin{figure}[h]
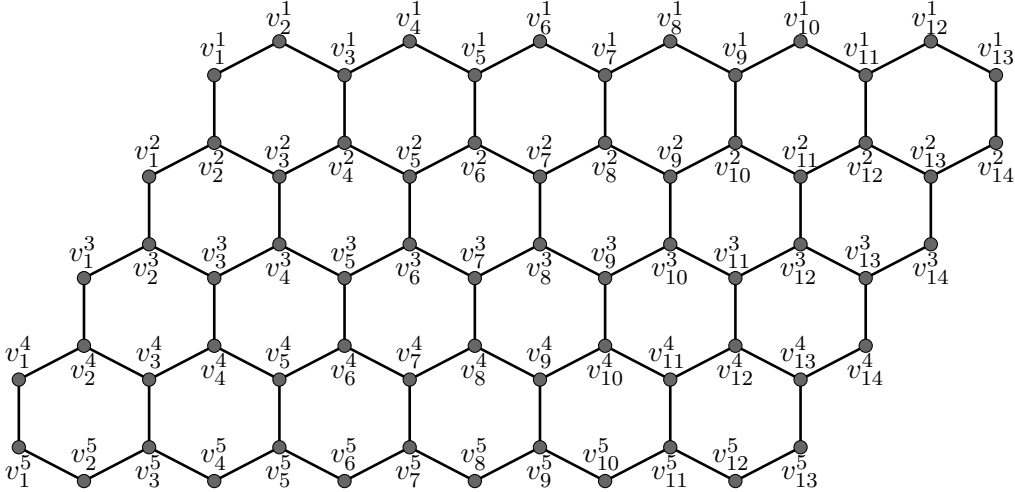

\centering

    \caption{The graph $H_{1 \times 4 \times 6}$.}
    \label{fig: graph of H(1_4_6) in ind complex}
\end{figure}

\subsection{Topological preliminaries and tools for independence complexes}

We follow Hatcher’s \textit{Algebraic Topology}~\cite{hatcher2005algebraic} and Kozlov’s \textit{Combinatorial Algebraic Topology}~\cite{kozlov2008combinatorial} as our primary references for standard definitions and terminology.

A \textbf{\textit{simplicial complex}} $\mathcal{K}$ is a non-empty family of subsets of a finite set $V$, referred to as the vertex set, with the property that it is closed under taking subsets, that is, for every $\tau \in \mathcal{K}$ and $\sigma \subset \tau$, we have $\sigma \in \mathcal{K}$. We call an element $\sigma \in K$ a \textit{face} of $K$. A face consisting of $k+1$ vertices is called a $k$-dimensional face, and the maximum dimension among all faces is called the dimension of the simplicial complex. The maximal faces of a simplicial complex, that is, those faces not properly contained in any other face, are called \emph{facets}. A subset $\mathcal{L} \subseteq \mathcal{K}$ is called a \textit{subcomplex} of $\mathcal{K}$ if $\mathcal{L}$ itself is a simplicial complex and every face of $\mathcal{L}$ is also a face of $\mathcal{K}$. We now discuss some important subcomplexes associated with a given simplicial complex, which will be used extensively throughout this article. The link and deletion of a face in a simplicial complex are defined as follows.

\begin{definition}[{\cite[Definitions~2.12~and~2.13]{kozlov2008combinatorial}}]\label{defn: link and deletion}
    Let $\mathcal{K}$ be a simplicial complex, and $\sigma$ be a face of $\mathcal{K}$. Then,
    \begin{enumerate}[label = \arabic*.]
        \item The \textit{\textbf{link}} of $\sigma$ in $\mathcal{K}$, denoted by $\lk_{\mathcal{K}}\lno \sigma \rno$,
        $$\lk_{\mathcal{K}}\lno \sigma \rno = \lcu \tau \in \mathcal{K}\ \middle|\ \sigma \cap \tau = \emptyset,\ \sigma \cup \tau \in \mathcal{K} \rcu.$$
        
        \item The \textit{\textbf{deletion}} of $\sigma$ in $\mathcal{K}$, denoted by $\del_{\mathcal{K}}\lno \sigma \rno$, is
        $$\del_{\mathcal{K}}\lno \sigma \rno = \lcu \tau \in \mathcal{K}\ \middle|\ \sigma \nsubseteq \tau \rcu.$$
    \end{enumerate}
\end{definition}

Observe that when $K$ is the independence complex of a graph $G$, that is, $K=\ind(G)$, the link and deletion of a vertex $v\in V(G)$ admit particularly simple descriptions. Specifically,
\begin{align*}
    \lk_{\ind(G)}(v) &= \ind\bigl(G\setminus N_G[v]\bigr);\\
    \del_{\ind(G)}(v) &= \ind\bigl(G\setminus\{v\}\bigr).
\end{align*}
These identities follow directly from the definitions of the link and deletion of a vertex in a simplicial complex, as well as those of the independence complex. Henceforth, $\ind\lno G \setminus N_G\lsq v\rsq \rno$ and $\ind\lno G \setminus \lcu v\rcu \rno$ denote the link and deletion of $v$ in $\ind(G)$, respectively. With this notation in place, we state the following lemma, which will be used extensively in what follows.

\begin{lemma}[{\cite[Proposition~3.1]{ADAMASZEK20121031}}]\label{lemma: finding homotopy using link and deletion ind complex}
     Let $v$ be a vertex of $G$. If the inclusion map $$\ind \lno G \setminus N_G \lsq v \rsq \rno \hookrightarrow \ind \lno G \setminus \lcu v \rcu \rno$$ is null-homotopic, then $$\ind\lno G \rno \simeq \ind \lno G \setminus \lcu v \rcu \rno \vee \Sigma \lno \ind \lno G \setminus N_G \lsq v \rsq \rno \rno.$$
\end{lemma}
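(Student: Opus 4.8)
The plan is to realize $\ind(G)$ as the mapping cone of the inclusion of the link into the deletion, and then invoke the standard fact that the mapping cone of a null-homotopic map splits as a wedge with a suspension. First I would set $\mathcal{K} = \ind(G)$ and record the two subcomplexes attached to $v$: the deletion $\del_{\mathcal{K}}(v) = \ind(G \setminus \{v\})$, consisting of the faces of $\mathcal{K}$ not containing $v$, and the link $\lk_{\mathcal{K}}(v) = \ind(G \setminus N_G[v])$. Alongside these I would introduce the closed star $\st_{\mathcal{K}}(v) = \{v\} * \lk_{\mathcal{K}}(v)$, the cone on the link with apex $v$. Since every face of $\mathcal{K}$ either avoids $v$ or contains $v$, one obtains the decomposition $\mathcal{K} = \del_{\mathcal{K}}(v) \cup \st_{\mathcal{K}}(v)$, and a direct check from the definitions gives $\del_{\mathcal{K}}(v) \cap \st_{\mathcal{K}}(v) = \lk_{\mathcal{K}}(v)$.

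The key structural observation is that $\st_{\mathcal{K}}(v)$ is a cone, hence contractible, and that gluing this cone onto $\del_{\mathcal{K}}(v)$ along its base $\lk_{\mathcal{K}}(v)$ is exactly the mapping-cone construction applied to the inclusion
$$ i \colon \ind\lno G \setminus N_G[v] \rno \hookrightarrow \ind\lno G \setminus \{v\} \rno. $$
That is, writing $A = \del_{\mathcal{K}}(v)$ and $C = \lk_{\mathcal{K}}(v)$, we have $\mathcal{K} = A \cup_C \cone(C)$, which is precisely the mapping cone $C_i$ of $i$. Thus $\ind(G)$ is homotopy equivalent to $C_i$.

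Finally I would use that the inclusion of a subcomplex is a cofibration, so the homotopy type of $C_i$ depends only on the homotopy class of $i$. Under the hypothesis that $i$ is null-homotopic, I may replace $i$ by a constant map, and the mapping cone of a constant map $C \to A$ is $A \vee \Sigma C$: collapsing the base $C$ of the cone to the chosen basepoint of $A$ turns $\cone(C)$ into the (unreduced) suspension $\Sigma C$, attached to $A$ at a single point. This yields
$$ \ind\lno G \rno \simeq \ind\lno G \setminus \{v\} \rno \vee \Sigma\lno \ind\lno G \setminus N_G[v] \rno \rno, $$
as claimed.

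The only delicate point, and the main thing to get right, is the passage from the decomposition to the mapping cone and the homotopy-invariance step: both rest on standard cofibration arguments together with care about basepoints and the unreduced-versus-reduced suspension. For simplicial complexes these are all unproblematic, since subcomplex inclusions are cofibrations and the spaces involved are well-pointed, so I expect this step to be routine rather than a genuine obstacle.
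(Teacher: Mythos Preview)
Your argument is correct and is the standard proof of this well-known splitting lemma: decompose $\ind(G)$ as $\del(v) \cup \st(v)$ with intersection $\lk(v)$, recognize this as the mapping cone of the inclusion $\lk(v) \hookrightarrow \del(v)$, and use that the mapping cone of a null-homotopic cofibration splits as a wedge with a suspension. Note, however, that the paper does not supply its own proof of this lemma; it is quoted directly from \cite[Proposition~3.1]{ADAMASZEK20121031} as a tool, so there is no in-paper argument to compare against. Your proof is essentially the one found in the cited reference.
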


The key aspect of \Cref{lemma: finding homotopy using link and deletion ind complex} lies in the requirement that the inclusion map be null-homotopic for the stated homotopy equivalence to hold. In this article, we employ two methods to establish the null-homotopy of the relevant inclusion maps.

The first method is based on comparing the dimensions of the link and the deletion of a chosen vertex. More precisely, if the spheres appearing in the homotopy type of the deletion have strictly larger dimensions than those appearing in the homotopy type of the link, then the inclusion map is null-homotopic.

The second method relies on a basic result from homotopy theory, which says that any continuous map $f\colon Y \to X$ that factors through a contractible space is homotopic to a constant map, and hence is null-homotopic. A general statement of this fact appears as a problem in Hatcher’s book (see \cite[Problem~0.10]{hatcher2005algebraic}). Precisely, we state the following lemma, which we will use to establish null-homotopy in such situations.

\begin{prop}\label{prop: null-homotopy through factor}
    Let $j\colon Y\hookrightarrow Z$ and $i\colon Z\hookrightarrow X$ be inclusion maps. If $Z$ is contractible, then the composite inclusion $i\circ j\colon Y\hookrightarrow X$ is null-homotopic.
\end{prop}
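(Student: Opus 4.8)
The plan is to invoke the definition of contractibility in its most directly usable form and then pre- and post-compose a single homotopy. Recall that $Z$ is contractible precisely when the identity map $\mathrm{id}_Z$ is null-homotopic, i.e.\ homotopic to a constant map. So the first step is to fix a basepoint $z_0 \in Z$ and use the hypothesis to produce a continuous homotopy $H\colon Z \times [0,1] \to Z$ satisfying $H(\,\cdot\,,0) = \mathrm{id}_Z$ and $H(\,\cdot\,,1) = c_{z_0}$, where $c_{z_0}$ denotes the constant map at $z_0$.

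The second step is to transport this homotopy to a homotopy on $Y$ by composing on both sides. I would set
$$G \;=\; i \circ H \circ \lno j \times \mathrm{id}_{[0,1]} \rno \colon\ Y \times [0,1] \longrightarrow X,$$
which is continuous as a composite of continuous maps. Evaluating at the two endpoints gives $G(\,\cdot\,,0) = i \circ \mathrm{id}_Z \circ j = i \circ j$ and $G(\,\cdot\,,1) = i \circ c_{z_0} \circ j = c_{i(z_0)}$, the constant map at $i(z_0) \in X$. Thus $G$ is an explicit homotopy from $i \circ j$ to a constant map, so $i \circ j$ is null-homotopic, as required.

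The only point requiring any care is the bookkeeping of which side each map sits on: one must note that composing a null-homotopy of $\mathrm{id}_Z$ with the fixed maps $j$ and $i$ preserves the property of being a homotopy (continuity is automatic, and the endpoint conditions follow by direct substitution). There is no genuine topological obstacle here; the statement is essentially a restatement of the fact that any map factoring through a contractible space is null-homotopic, and the proof is a one-line application of the homotopy extension of the identity-to-constant deformation. For completeness I would also remark that the conclusion does not use that $j$ and $i$ are inclusions—only that they are continuous—so the same argument shows that \emph{any} continuous map factoring through a contractible space is null-homotopic, which is the form in which the result is applied later in the article.
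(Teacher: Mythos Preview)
Your proof is correct and is the standard argument. The paper itself does not supply a proof of this proposition: it merely states the result and attributes it to \cite[Problem~0.10]{hatcher2005algebraic}, noting that any continuous map factoring through a contractible space is null-homotopic. Your write-up fills in exactly the details one would expect for that exercise, and your closing remark---that the argument uses only continuity of $i$ and $j$, not that they are inclusions---matches the generality in which the paper invokes the result.
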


We say that a vertex $w$ is \textit{folded} using a vertex $v$ in a graph $G$ if $N(v) \subseteq N(w)$. The following lemma, commonly known as the \textit{fold lemma} for independence complexes, asserts that under this condition, the removal of a vertex from the graph preserves the homotopy type of the associated independence complex.

\begin{lemma}(\cite[{Lemma~2.4}]{engstrom_clawfree})\label{foldlemma}
    For two distinct vertices $v$ and $w$ of a graph $G$ with $N(v) \subseteq N(w)$, $\ind(G)$ collapses onto $\ind\left(G \setminus \left\{w\right\}\right)$, that is, $$\ind(G) \simeq \ind(G \setminus w).$$
\end{lemma}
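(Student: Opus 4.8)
The plan is to apply the link--deletion machinery already set up in the excerpt, namely \Cref{lemma: finding homotopy using link and deletion ind complex} and \Cref{prop: null-homotopy through factor}, to the vertex $w$. First I would record that $v$ and $w$ are non-adjacent: if $w \in N(v)$, then $N(v)\subseteq N(w)$ would give $w \in N(w)$, which is impossible. The heart of the argument is the observation that $v$ becomes isolated in the induced subgraph $G \setminus N_G[w]$. Indeed $v \notin N_G[w]$, so $v$ is a vertex of this subgraph, and its neighbours there are $N_G(v)\setminus N_G[w]$, which is empty since $N_G(v) \subseteq N_G(w) \subseteq N_G[w]$. Consequently $v$ may be adjoined to every independent set of $G\setminus N_G[w]$, so the link $\lk_{\ind(G)}(w) = \ind\lno G\setminus N_G[w]\rno$ is a cone with apex $v$, and is therefore contractible.

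Given this, I would argue that the inclusion $\ind\lno G\setminus N_G[w]\rno \hookrightarrow \ind\lno G\setminus w\rno$ is null-homotopic: it factors through the contractible space $\ind\lno G\setminus N_G[w]\rno$ itself, so \Cref{prop: null-homotopy through factor} applies (equivalently, any map out of a contractible space is null-homotopic). \Cref{lemma: finding homotopy using link and deletion ind complex} then yields $\ind(G) \simeq \ind\lno G\setminus w\rno \vee \Sigma\lno\ind\lno G\setminus N_G[w]\rno\rno$. Because the link is contractible, its suspension is contractible, and this second wedge summand collapses, leaving $\ind(G)\simeq \ind\lno G\setminus w\rno$, as required.

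The only place where the hypothesis $N(v)\subseteq N(w)$ is genuinely used is the isolation of $v$ in $G\setminus N_G[w]$; everything after that is formal. I expect this to be the one step needing care, since it is exactly where the fold condition does its work. To obtain the literal ``collapses onto'' in the statement rather than a bare homotopy equivalence, I would instead run a discrete-Morse argument: match the faces of $\ind(G)$ containing $w$ by toggling $v$, sending a face $\rho\ni w$ with $v\notin\rho$ to $\rho\cup\lcu v\rcu$ and back. The isolation of $v$ guarantees that $\rho\cup\lcu v\rcu$ is again independent, so this is a well-defined perfect matching on the $w$-containing faces; single-vertex toggle matchings are automatically acyclic, and the resulting elementary collapses remove precisely the faces containing $w$, leaving the subcomplex $\ind\lno G\setminus w\rno$. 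The subtle point in either route is this closure-under-toggle, equivalently that $v$ is non-adjacent both to $w$ and to every vertex of the link, which is exactly the content of the fold hypothesis.
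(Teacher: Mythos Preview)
Your argument is correct, but note that the paper does not supply its own proof of this lemma: it is simply quoted from Engstr\"om with a citation, so there is nothing in the paper to compare against. Both routes you outline are standard and valid; the link--deletion route cleanly yields the homotopy equivalence using only tools already assembled in the excerpt, while the toggle-$v$ matching on the $w$-faces is exactly the classical proof of the collapse statement (and is essentially how Engstr\"om argues). The one point worth stating slightly more carefully in the second route is that an acyclic matching whose critical faces form a subcomplex genuinely gives a \emph{simplicial} collapse onto that subcomplex, not merely a homotopy equivalence to some CW complex with the same cell count; this is standard, but it is the step that upgrades ``$\simeq$'' to ``$\searrow$''.
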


The following lemma describes the independence complex of a graph that can be expressed as the disjoint union of two graphs.

\begin{lemma}[{\cite[Remark~2.2]{Barmakstarclusters}}]
    If a graph $G$ is the disjoint union of two graphs $H_1$ and $H_2$, then the independence complex of $G$ is homotopic to the join of the independence complexes of $H_1$ and $H_2$, that is, $$\ind(G) \simeq \ind(H_1) * \ind(H_2).$$ In particular, if $H_1$ consists of a single vertex, then $\ind(G)$ is a cone over $\ind(H_2)$, and hence contractible. Similarly, if $H_1$ is the path graph on two vertices, that is, $P_2$, then $\ind(G) \simeq \Sigma \ind(H_2)$.
\end{lemma}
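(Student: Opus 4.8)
The plan is to prove the stronger statement that $\ind(G)$ and $\ind(H_1) * \ind(H_2)$ coincide as abstract simplicial complexes, from which the claimed homotopy equivalence follows immediately. Write $V(G) = V(H_1) \sqcup V(H_2)$. Since $G$ is the disjoint union of $H_1$ and $H_2$, there are no edges joining a vertex of $H_1$ to a vertex of $H_2$; hence every edge of $G$ lies entirely inside $V(H_1)$ or entirely inside $V(H_2)$.

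First I would verify that the two face sets agree. A subset $\sigma \subseteq V(G)$ is a face of $\ind(G)$ if and only if $\sigma$ is independent in $G$. Writing $\sigma_i = \sigma \cap V(H_i)$, the absence of cross-edges makes independence \emph{coordinatewise}: $\sigma$ is independent in $G$ if and only if $\sigma_1 \in \ind(H_1)$ and $\sigma_2 \in \ind(H_2)$. By the definition of the simplicial join, the faces of $\ind(H_1) * \ind(H_2)$ are exactly the disjoint unions $\tau_1 \sqcup \tau_2$ with $\tau_i \in \ind(H_i)$. Since $\sigma = \sigma_1 \sqcup \sigma_2$ is such a decomposition, and this decomposition is unique because $V(H_1) \cap V(H_2) = \emptyset$, the two complexes have identical face sets on the common vertex set $V(G)$. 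Therefore $\ind(G) = \ind(H_1) * \ind(H_2)$, and in particular $\ind(G) \simeq \ind(H_1) * \ind(H_2)$.

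For the special cases I would simply specialize this identity together with two standard facts about joins. If $H_1$ is a single vertex $v$, then $\ind(H_1) = \{\emptyset, \{v\}\}$ is a point, and the join of a point with any complex $L$ is the cone $\cone(L)$; hence $\ind(G) = \cone\lno \ind(H_2) \rno$, which is contractible. If $H_1 = P_2$ is an edge on vertices $\{a,b\}$, then $\{a,b\}$ is not independent, so $\ind(P_2) = \{\emptyset, \{a\}, \{b\}\} \cong \mathbb{S}^0$; since $\mathbb{S}^0 * L = \Sigma L$, we obtain $\ind(G) \simeq \Sigma \ind(H_2)$.

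There is no genuine obstacle here: the result is a direct unwinding of the definitions of an independent set and of the simplicial join, combined with the standard identifications of the join with a point as a cone and of the join with $\mathbb{S}^0$ as a suspension. The only point that requires care is to keep the decomposition $\sigma = \sigma_1 \sqcup \sigma_2$ honest, that is, to use that the disjointness of the vertex sets makes this decomposition unique and that the lack of cross-edges is precisely what renders independence coordinatewise.
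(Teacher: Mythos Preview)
Your proof is correct. In fact, you prove the stronger statement that $\ind(G)$ and $\ind(H_1)*\ind(H_2)$ are \emph{equal} as abstract simplicial complexes, not merely homotopy equivalent; the key observation that independence in a disjoint union is coordinatewise is exactly what is needed, and the special cases follow from the standard identifications $\mathrm{pt}*L=\cone(L)$ and $\mathbb{S}^0*L\simeq\Sigma L$.

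Note that the paper does not supply its own proof of this lemma: it is quoted as a known fact with a citation to Barmak's work, so there is no argument in the paper to compare against. Your direct argument from the definitions is the standard one and is entirely adequate.
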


Along with these results, we also need the homotopy type of the independence complexes of paths and cycles, which will be used extensively throughout this article.

\begin{theorem}[{\cite[Proposition~4.5(3)]{KozlovDirectedTrees}}]\label{thm: homotopy type of ind complex of path graph}
    The independence complex of path graphs, $P_n$, has the following homotopy type,

    $$\ind \lno P_n \rno = \begin{cases}
        \mathbb{S}^{k-1},& n=3k;\\
        *,& n=3k+1;\\
        \mathbb{S}^{k},& n=3k+2.
    \end{cases}$$
\end{theorem}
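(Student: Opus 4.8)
The plan is to reduce everything to a single three-step recursion and then induct. Writing $P_n$ with vertex set $\lcu v_1, v_2, \dots, v_n \rcu$ and edges $\lcu v_i, v_{i+1} \rcu$, I would apply \Cref{lemma: finding homotopy using link and deletion ind complex} to the second vertex $v_2$, whose closed neighborhood is $N[v_2] = \lcu v_1, v_2, v_3 \rcu$. The deletion $P_n \setminus \lcu v_2 \rcu$ is the induced subgraph on $\lcu v_1 \rcu \sqcup \lcu v_3, \dots, v_n \rcu$, in which $v_1$ becomes an isolated vertex; hence $\ind \lno P_n \setminus \lcu v_2 \rcu \rno$ is a cone over $\ind \lno P_{n-2} \rno$ and is therefore contractible. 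The link is $\ind \lno P_n \setminus N[v_2] \rno = \ind \lno P_{n-3} \rno$, the path on $\lcu v_4, \dots, v_n \rcu$.

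Because the deletion is contractible, the inclusion $\ind \lno P_n \setminus N[v_2] \rno \hookrightarrow \ind \lno P_n \setminus \lcu v_2 \rcu \rno$ is a map into a contractible space, and so is null-homotopic; the hypothesis of the lemma is thus automatic. The lemma then yields $\ind \lno P_n \rno \simeq \ind \lno P_n \setminus \lcu v_2 \rcu \rno \vee \Sigma \ind \lno P_{n-3} \rno$, and since the first wedge summand is contractible this collapses to the clean recursion $\ind \lno P_n \rno \simeq \Sigma \ind \lno P_{n-3} \rno$, valid for all $n \geq 3$.

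It then remains to run the induction, with base cases $n = 1, 2, 3$ verified by hand: $\ind \lno P_1 \rno$ is a point ($\simeq *$), $\ind \lno P_2 \rno$ is two vertices with no edge ($\simeq \mathbb{S}^0$), and $\ind \lno P_3 \rno$ is the edge $\lcu v_1, v_3 \rcu$ together with the isolated vertex $v_2$, hence $\simeq \mathbb{S}^0$. These match the claimed values for $n = 3\cdot 0 + 1$, $3\cdot 0 + 2$, and $3\cdot 1$. For $n \geq 4$ I would split into the three residue classes modulo $3$ and apply the identities $\Sigma \mathbb{S}^{d} = \mathbb{S}^{d+1}$ and $\Sigma(*) \simeq *$ to the inductive hypothesis on $n-3$; the exponents shift exactly as required (for instance $n = 3k$ gives $\Sigma \ind \lno P_{3(k-1)} \rno \simeq \Sigma \mathbb{S}^{k-2} = \mathbb{S}^{k-1}$, and similarly for the other two classes).

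The only genuinely delicate point is the bookkeeping: keeping the three residue classes and their sphere dimensions aligned through the suspension, and pinning down the small base cases correctly (in particular that $\ind \lno P_3 \rno \simeq \mathbb{S}^0$ rather than a point). The topological content — one application of the link–deletion lemma plus the observation that an isolated vertex forces the deletion to be contractible — is immediate, so I expect no real obstacle beyond this routine indexing.
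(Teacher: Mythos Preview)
Your argument is correct. The paper does not actually prove this statement; it is quoted as \cite[Proposition~4.5(3)]{KozlovDirectedTrees} and used as a black box, so there is no in-paper proof to compare against. Your approach via \Cref{lemma: finding homotopy using link and deletion ind complex} applied at $v_2$ is clean and entirely self-contained within the paper's toolkit: the deletion has $v_1$ isolated and is therefore a cone, the inclusion into it is automatically null-homotopic, and the wedge collapses to $\Sigma\,\ind(P_{n-3})$. The base cases and the residue-class induction are handled correctly.

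One remark: an equally short route, and one more in the spirit of the paper's later computations, is to use \Cref{foldlemma} directly. In $P_n$ with $n\ge 3$ one has $N(v_1)=\{v_2\}\subseteq N(v_3)$, so $v_3$ can be folded away; the resulting graph is $P_2\sqcup P_{n-3}$, giving $\ind(P_n)\simeq \Sigma\,\ind(P_{n-3})$ in one step without invoking the link--deletion lemma or checking null-homotopy. Both arguments yield the same recursion and the same conclusion.
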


\begin{theorem}[{\cite[Proposition~5.2]{KozlovDirectedTrees}}]\label{thm: homotopy type of ind complex of cycle graph}
    The independence complex of cycle graphs, $C_n$, has the following homotopy type,

    $$\ind \lno C_n \rno = \begin{cases}
        \mathbb{S}^{k-1} \vee \mathbb{S}^{k-1},& n=3k;\\
        \mathbb{S}^{k-1},& n=3k+1;\\
        \mathbb{S}^{k},& n=3k+2.
    \end{cases}$$
\end{theorem}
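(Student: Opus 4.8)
The plan is to compute $\ind\lno C_n \rno$ by a single application of the link--deletion lemma (\Cref{lemma: finding homotopy using link and deletion ind complex}) at an arbitrary vertex, thereby reducing everything to the already-known homotopy types of independence complexes of paths (\Cref{thm: homotopy type of ind complex of path graph}). Label the vertices of $C_n$ cyclically by $1, 2, \dots, n$ and fix $v = 1$. Deleting $v$ breaks the cycle into a path on the remaining $n-1$ vertices, so $\ind\lno C_n \setminus \lcu v \rcu \rno = \ind\lno P_{n-1} \rno$. Deleting the closed neighborhood $N_{C_n}\lsq v \rsq = \lcu n, 1, 2 \rcu$ removes three consecutive vertices and leaves the path on $3, 4, \dots, n-1$, so $\ind\lno C_n \setminus N_{C_n}\lsq v \rsq \rno = \ind\lno P_{n-3} \rno$.

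Granting for the moment that the inclusion $\ind\lno P_{n-3} \rno \hookrightarrow \ind\lno P_{n-1} \rno$ of the link into the deletion is null-homotopic, \Cref{lemma: finding homotopy using link and deletion ind complex} yields the wedge decomposition
\[
    \ind\lno C_n \rno \simeq \ind\lno P_{n-1} \rno \vee \Sigma\lno \ind\lno P_{n-3} \rno \rno .
\]
The remainder is bookkeeping: substitute the three cases of \Cref{thm: homotopy type of ind complex of path graph} according to the residue of $n$ modulo $3$, using $\Sigma \mathbb{S}^{m} = \mathbb{S}^{m+1}$ and that the suspension of a contractible space is contractible. For $n = 3k$ one gets $\ind\lno P_{3k-1} \rno = \mathbb{S}^{k-1}$ and $\Sigma\ind\lno P_{3k-3} \rno = \Sigma \mathbb{S}^{k-2} = \mathbb{S}^{k-1}$, hence $\mathbb{S}^{k-1} \vee \mathbb{S}^{k-1}$; for $n = 3k+1$ the link term is contractible and only $\ind\lno P_{3k}\rno = \mathbb{S}^{k-1}$ survives; for $n = 3k+2$ the deletion term is contractible and only $\Sigma \ind\lno P_{3k-1}\rno = \Sigma \mathbb{S}^{k-1} = \mathbb{S}^{k}$ survives. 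These three lines reproduce the claimed formula.

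The one genuine step — and the point I would treat most carefully — is the null-homotopy of the inclusion, since the wedge formula holds only under that hypothesis. Conveniently, in each residue class it follows from one of the two mechanisms recorded in the excerpt. When $n = 3k$, the link $\ind\lno P_{3k-3}\rno \simeq \mathbb{S}^{k-2}$ has spheres of strictly smaller dimension than the deletion $\ind\lno P_{3k-1}\rno \simeq \mathbb{S}^{k-1}$, so the dimension-comparison argument applies: a map out of a space homotopy equivalent to a wedge of $(k-2)$-spheres into the $(k-2)$-connected wedge of $(k-1)$-spheres is null-homotopic. When $n = 3k+2$ the deletion $\ind\lno P_{3k+1}\rno$ is contractible, so the inclusion factors through a contractible space and is null-homotopic by \Cref{prop: null-homotopy through factor}; when $n = 3k+1$ the link $\ind\lno P_{3k-2}\rno$ is itself contractible, so the inclusion is null-homotopic because its domain is contractible.

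Finally I would dispose of the small cases where $P_{n-3}$ degenerates — for instance $n = 3$ gives $P_0$, the empty graph, whose independence complex is the $(-1)$-sphere with $\Sigma \mathbb{S}^{-1} = \mathbb{S}^0$ — either by adopting this convention so that the formula holds uniformly for $n \geq 3$, or by checking $\ind\lno C_3 \rno$, $\ind\lno C_4 \rno$, $\ind\lno C_5 \rno$ directly. Thus the main obstacle is not the computation but verifying the null-homotopy hypothesis in every case; the observation that the three residue classes are each covered by one of the available null-homotopy mechanisms is precisely what makes a single link--deletion step suffice.
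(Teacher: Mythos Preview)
Your argument is correct. The paper itself does not supply a proof of this statement: it is quoted verbatim from Kozlov and used only as input for later computations. Your derivation via a single link--deletion step at one vertex of $C_n$, reducing to the known homotopy types of $\ind\lno P_{n-1}\rno$ and $\ind\lno P_{n-3}\rno$, is a clean self-contained proof using exactly the tools assembled in the preliminaries. The case split for the null-homotopy hypothesis is handled properly: in the $3k$ case the dimension gap between $\mathbb{S}^{k-2}$ and $\mathbb{S}^{k-1}$ forces null-homotopy, and in the other two residue classes one of the two spaces is contractible. The boundary case $n=3$ (where $P_{n-3}$ is empty and its independence complex is $\mathbb{S}^{-1}$) is correctly absorbed by the convention $\Sigma\mathbb{S}^{-1}=\mathbb{S}^0$.
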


\section[Independence Complex of $H_{1\times 1 \times n}$]{Independence Complex of $\texorpdfstring{\bm{H_{1\times 1 \times n}}}{H_{1\times 1 \times n}}$}

In this section, we determine the homotopy type of the independence complex of the $\lno {1\times 1 \times n} \rno$-hexagonal grid graph (see \Cref{fig: graph of H1n in ind complex}), $H_{1\times 1 \times n}$ (or, the \textit{hexagonal line tiling}), that is, $\operatorname{Ind}\lno H_{1 \times 1 \times n}\rno$, for $n \geq 1$.
\begin{figure}[H]
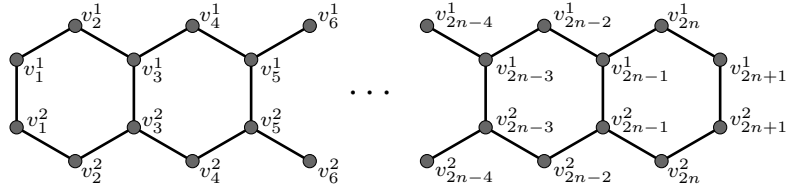

\centering

    \caption{The graph $H_{1 \times 1 \times n}$.}
    \label{fig: graph of H1n in ind complex}
\end{figure}

At first, we state the main result of this section.

\begin{theorem}\label{thm: homotopy type of H11n}
    For $n \geq 1$, the independence complex of the hexagonal line tiling, $H_{1\times 1 \times n}$ is homotopy equivalent to a wedge of two $n$-dimensional spheres, that is,
    $$\ind \lno H_{1\times 1 \times n} \rno \simeq \mathbb{S}^n \vee \mathbb{S}^n.$$
\end{theorem}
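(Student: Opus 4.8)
The plan is to argue by induction on $n$, establishing the single clean recursion
$\ind\lno H_{1\times 1\times n}\rno \simeq \Sigma\,\ind\lno H_{1\times 1\times\lno n-1\rno}\rno$ for $n\geq 2$; iterating it down to the base case then gives $\Sigma^{n-1}\lno\mathbb{S}^1\vee\mathbb{S}^1\rno=\mathbb{S}^n\vee\mathbb{S}^n$. For the base case $n=1$, observe that $H_{1\times 1\times 1}$ is a single hexagon, namely the $6$-cycle $v_1^1 v_2^1 v_3^1 v_3^2 v_2^2 v_1^2$, so \Cref{thm: homotopy type of ind complex of cycle graph} yields $\ind\lno C_6\rno\simeq\mathbb{S}^1\vee\mathbb{S}^1$. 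In general $H_{1\times 1\times n}$ is a linear chain of $n$ hexagons: two paths on $v_1^i,\dots,v_{2n+1}^i$ ($i=1,2$) joined by rungs $v_{2t-1}^1 v_{2t-1}^2$ at the odd positions.

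To produce the recursion I would apply \Cref{lemma: finding homotopy using link and deletion ind complex} to the vertex $v=v_{2n}^1$, the degree-two top vertex of the rightmost hexagon, with $N\lsq v_{2n}^1\rsq=\lcu v_{2n}^1,\,v_{2n-1}^1,\,v_{2n+1}^1\rcu$. In the \emph{deletion} $H_{1\times 1\times n}\setminus\lcu v_{2n}^1\rcu$ the tip $v_{2n+1}^1$ loses its only non-rung neighbour and becomes a leaf supported at $v_{2n+1}^2$. Applying \Cref{lemma: finding homotopy using link and deletion ind complex} again at $v_{2n+1}^2$: deleting $v_{2n+1}^2$ isolates $v_{2n+1}^1$, so that complex is a cone (contractible) and the relevant inclusion is null-homotopic, while the link strips off $\lcu v_{2n+1}^2,\,v_{2n}^2,\,v_{2n+1}^1\rcu$. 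What remains is exactly $H_{1\times 1\times\lno n-1\rno}$ (positions $1,\dots,2n-1$ in both rows, with rungs at all odd positions up to $2n-1$). Hence the deletion branch contributes $\Sigma\,\ind\lno H_{1\times 1\times\lno n-1\rno}\rno$.

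The crux, and the step I expect to be the main obstacle, is showing the \emph{link} $L:=H_{1\times 1\times n}\setminus N\lsq v_{2n}^1\rsq$ is contractible; this is precisely what pins the number of spheres at two instead of letting it grow. After removing $\lcu v_{2n}^1, v_{2n-1}^1, v_{2n+1}^1\rcu$, two leaves appear: $v_{2n+1}^2$ (supported at $v_{2n}^2$) and $v_{2n-2}^1$ (supported at $v_{2n-3}^1$). Reducing the first via \Cref{lemma: finding homotopy using link and deletion ind complex} at $v_{2n}^2$ peels off $\lcu v_{2n}^2,v_{2n-1}^2,v_{2n+1}^2\rcu$ and leaves a copy of $H_{1\times 1\times\lno n-2\rno}$ carrying two pendant leaves $v_{2n-2}^1$ and $v_{2n-2}^2$ at its rightmost rung $v_{2n-3}^1 v_{2n-3}^2$. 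Now the fold lemma (\Cref{foldlemma}) applies: since $N\lno v_{2n-2}^1\rno=\lcu v_{2n-3}^1\rcu\subseteq N\lno v_{2n-3}^2\rno$ (the rung makes $v_{2n-3}^2\sim v_{2n-3}^1$), one may delete $v_{2n-3}^2$; but this strands $v_{2n-2}^2$ as an isolated vertex, so the resulting independence complex is a cone and hence contractible. Tracing the suspensions back shows $\ind\lno L\rno$ is contractible.

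Finally, because $\ind\lno L\rno$ is contractible, the inclusion $\ind\lno L\rno\hookrightarrow\ind\lno H_{1\times 1\times n}\setminus\lcu v_{2n}^1\rcu\rno$ is automatically null-homotopic, so \Cref{lemma: finding homotopy using link and deletion ind complex} gives $\ind\lno H_{1\times 1\times n}\rno\simeq\Sigma\,\ind\lno H_{1\times 1\times\lno n-1\rno}\rno\vee\Sigma\lno\ind\lno L\rno\rno\simeq\Sigma\,\ind\lno H_{1\times 1\times\lno n-1\rno}\rno$, closing the induction. I would verify $n=2,3$ directly, since there $H_{1\times 1\times\lno n-2\rno}$ degenerates to a single edge or to $C_6$, to confirm that the leaf-and-fold bookkeeping in the link branch is not upset by boundary effects.
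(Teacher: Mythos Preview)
Your argument is correct and yields a genuinely different proof from the paper's. The paper applies \Cref{lemma: finding homotopy using link and deletion ind complex} at the corner vertex $v_{2n+1}^{1}$, introduces the auxiliary graphs $X_n^{(1)}=H_{1\times 1\times n}\setminus\{v_{2n+1}^1\}$ and $X_n^{(2)}=H_{1\times 1\times n}\setminus N[v_{2n+1}^1]$, and runs a separate induction to show $\ind(X_n^{(2)})\simeq\mathbb{S}^{n-1}$ and $\ind(X_n^{(1)})\simeq\mathbb{S}^{n}$; the required null-homotopy then follows from a dimension comparison between the two spheres. You instead pivot at $v_{2n}^{1}$ and obtain the single clean recursion $\ind(H_{1\times 1\times n})\simeq\Sigma\,\ind(H_{1\times 1\times (n-1)})$, with the null-homotopy coming for free because your link is contractible. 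Your route is more economical (no auxiliary family, no secondary induction, no connectivity estimate), while the paper's route has the incidental payoff that the graphs $X_n^{(2)}$ reappear later: in the proof of \Cref{thm: homotopy type of Yn in Hn2} the base case $Y_1$ is identified with $X_2^{(2)}$. Your boundary check at $n=2$ is warranted and works exactly as you describe, with ``$H_{1\times 1\times 0}$'' degenerating to the single rung $v_1^1v_1^2$.
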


We define the following induced subgraphs of $H_{1\times 1 \times n}$ that will be useful in computing the homotopy type on $\ind \lno H_{1\times 1 \times n} \rno$ ,

\begin{enumerate}
    \item $X_n^{(1)} = H_{1\times 1 \times n} \setminus \lcu v_{2n+1}^{1} \rcu$ (see \Cref{fig: graph of Xn1 - ind complex}).
    \item $X_n^{(2)} = H_{1\times 1 \times n} \setminus N \lsq v_{2n+1}^{1} \rsq$ (see \Cref{fig: graph of Xn2 - ind complex}).
\end{enumerate}
\begin{figure}[H]
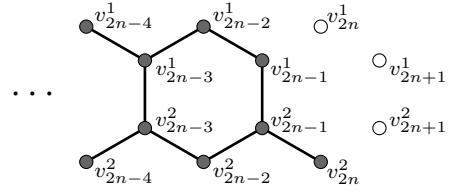

\centering
    \begin{minipage}{0.45\textwidth}
        \centering

        \caption{The graph $X_{n}^{(2)}$.}
        \label{fig: graph of Xn2 - ind complex}
    \end{minipage}
\end{figure}

The idea here is to look at the link and deletion of the vertex $v_{2n+1}^1$ in $\ind \lno H_{1 \times 1 \times n} \rno$. These are precisely the $\ind \lno H_{1 \times 1 \times n} \setminus N \lsq v_{2n+1}^1 \rsq\rno$ and $\ind \lno H_{1 \times 1 \times n} \setminus \lcu v_{2n+1}^1 \rcu \rno$.

\begin{lemma}\label{lemma: homotopy of Xn-1}
    For $n \geq 1$, the independence complex of $X_n^{(1)}$, $\ind\lno X_n^{(1)} \rno$ has the following homotopy type, $$\ind\lno X_n^{(1)} \rno \simeq \Sigma \lno \ind \lno X_{n-1}^{(2)} \rno \rno.$$
\end{lemma}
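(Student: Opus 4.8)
The plan is to apply the link--deletion lemma (\Cref{lemma: finding homotopy using link and deletion ind complex}) to the vertex $v_{2n-1}^{1}$ of $X_n^{(1)}$, chosen precisely so that the deletion collapses to a point and the single suspension of the link produces the asserted recursion. The first step is to record the local configuration at the right-hand end of $X_n^{(1)}$: because $v_{2n+1}^{1}$ has been deleted, the top row now terminates at $v_{2n}^{1}$, whose only neighbour is $v_{2n-1}^{1}$. Hence $v_{2n}^{1}$ is a pendant vertex, and this is the structural feature that the argument exploits.

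The second step is to check the hypothesis of \Cref{lemma: finding homotopy using link and deletion ind complex} for $v = v_{2n-1}^{1}$. In the deletion $X_n^{(1)} \setminus \lcu v_{2n-1}^{1} \rcu$ the pendant $v_{2n}^{1}$ loses its unique neighbour and becomes isolated; since the independence complex of a graph with an isolated vertex is a cone, $\ind \lno X_n^{(1)} \setminus \lcu v_{2n-1}^{1} \rcu \rno$ is contractible. The inclusion of the link into the deletion therefore factors through a contractible complex, so it is null-homotopic by \Cref{prop: null-homotopy through factor}. Applying \Cref{lemma: finding homotopy using link and deletion ind complex} and discarding the (contractible) deletion summand gives
$$\ind \lno X_n^{(1)} \rno \simeq \Sigma \lno \ind \lno X_n^{(1)} \setminus N \lsq v_{2n-1}^{1} \rsq \rno \rno.$$

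The third step is to identify the link. I would compute $N \lsq v_{2n-1}^{1} \rsq = \lcu v_{2n-2}^{1}, v_{2n-1}^{1}, v_{2n}^{1}, v_{2n-1}^{2} \rcu$ and read off the surviving induced subgraph from the labelling: the top row is truncated to $v_1^1, \dots, v_{2n-3}^1$, the bottom row to $v_1^2, \dots, v_{2n-2}^2$, and the surviving vertical edges are exactly those at the odd positions up to $2n-3$. This is the defining description of $X_{n-1}^{(2)}$ once the small residual configuration near the deleted corner is cleared using the fold lemma (\Cref{foldlemma}). Matching the reduced graph against the definition of $X_{n-1}^{(2)}$ gives $\ind \lno X_n^{(1)} \setminus N \lsq v_{2n-1}^{1} \rsq \rno \simeq \ind \lno X_{n-1}^{(2)} \rno$, which combined with the displayed equivalence proves the lemma.

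I expect the main obstacle to be exactly this last identification. One must track with care which vertices and which vertical edges survive the removal of $N \lsq v_{2n-1}^{1} \rsq$, and in particular verify that the leftover edge near the deleted corner is genuinely absorbed by the fold lemma and does not persist as a separate component, since such a component would raise the suspension count and break the single-suspension form of the statement. A secondary, more routine difficulty is the treatment of the smallest cases $n = 1, 2$, where the residual subgraph degenerates and the meaning of $X_{n-1}^{(2)}$ must be fixed by convention before the recursion can be applied.
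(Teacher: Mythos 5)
Your route is genuinely different from the paper's: the paper never invokes \Cref{lemma: finding homotopy using link and deletion ind complex} for this lemma, but instead performs two folds inside $X_n^{(1)}$ itself, folding $v_{2n-2}^{1}$ and $v_{2n-1}^{2}$ using the pendant vertex $v_{2n}^{1}$, which leaves $X_{n-1}^{(2)} \sqcup P_2 \sqcup P_2$ and hence $\ind\lno X_n^{(1)}\rno \simeq \Sigma^{2}\lno \ind\lno X_{n-1}^{(2)}\rno\rno$. Your first two steps are sound: $v_{2n}^{1}$ is indeed pendant, the deletion $X_n^{(1)} \setminus \lcu v_{2n-1}^{1}\rcu$ contains $v_{2n}^{1}$ as an isolated vertex and is therefore contractible, so \Cref{lemma: finding homotopy using link and deletion ind complex} (together with \Cref{prop: null-homotopy through factor}) correctly reduces the problem to $\ind\lno X_n^{(1)}\rno \simeq \Sigma\lno\ind\lno X_n^{(1)} \setminus N\lsq v_{2n-1}^{1}\rsq\rno\rno$.

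The gap is in your third step, and it is exactly the point you flagged as the main obstacle. Removing $N\lsq v_{2n-1}^{1}\rsq = \lcu v_{2n-2}^{1},\, v_{2n-1}^{1},\, v_{2n}^{1},\, v_{2n-1}^{2}\rcu$ deletes only $v_{2n-1}^{2}$ from the bottom row: the vertices $v_{2n}^{2}$ and $v_{2n+1}^{2}$ survive, and since $v_{2n+1}^{1}$ is absent from $X_n^{(1)}$, they form a disjoint $P_2$ component of the link. So the link is $X_{n-1}^{(2)} \sqcup P_2$, not $X_{n-1}^{(2)}$, and this $P_2$ cannot be cleared by \Cref{foldlemma}: a fold deletes a vertex $w$ only when some other vertex $v$ satisfies $N(v)\subseteq N(w)$, and no vertex of $X_{n-1}^{(2)}$ has its neighbourhood contained in $\lcu v_{2n}^{2}\rcu$ or $\lcu v_{2n+1}^{2}\rcu$; more decisively, folds preserve homotopy type, whereas deleting a disjoint $P_2$ desuspends the independence complex, since $\ind\lno G \sqcup P_2\rno \simeq \Sigma\lno\ind\lno G\rno\rno$. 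Run correctly, your argument therefore gives
$$\ind\lno X_n^{(1)}\rno \simeq \Sigma\lno \ind\lno X_{n-1}^{(2)} \sqcup P_2\rno\rno \simeq \Sigma^{2}\lno\ind\lno X_{n-1}^{(2)}\rno\rno,$$
a double suspension. Note that this is precisely what the paper's own proof derives, and it is what the rest of the paper uses: \Cref{coro: homotopy type of Xn-2} ($\ind\lno X_n^{(1)}\rno\simeq\bbS^{n}$) follows from \Cref{lemma: homotopy of Xn2} ($\ind\lno X_{n-1}^{(2)}\rno\simeq\bbS^{n-2}$) only with two suspensions. The single $\Sigma$ in the displayed statement of \Cref{lemma: homotopy of Xn-1} is an internal inconsistency (a typo) in the paper, and your attempt to land exactly on that form is what forced the incorrect link identification; the statement your method actually proves, once the stray $P_2$ is accounted for, is $\ind\lno X_n^{(1)}\rno \simeq \Sigma^{2}\lno\ind\lno X_{n-1}^{(2)}\rno\rno$.
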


\begin{proof}
    Observe that in $X_n^{(1)}$ (see \Cref{fig: graph of Xn1 - ind complex}), we fold $v_{2n-2}^1$ and $v_{2n-1}^2$ using $v_{2n}^1$ (see \Cref{fig: foldings of Xn1}).

    \begin{figure}[H]
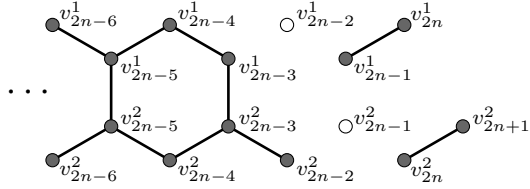

        \centering

        \caption{Foldings in the graph $X_n^{(1)}$}
        \label{fig: foldings of Xn1}
    \end{figure}
    
    By applying the fold Lemma (see \Cref{foldlemma}), we get, $$\ind \lno X_n^{(1)} \rno \simeq \ind \lno X_{n-1}^{(2)} \sqcup P_2 \sqcup P_2\rno
    \simeq \Sigma^2 \lno \ind \lno X_{n-1}^{(2)} \rno \rno.$$

    This completes the proof \Cref{lemma: homotopy of Xn-1}.
\end{proof}

\begin{lemma}\label{lemma: homotopy of Xn2}
    For $n \geq 1$, the independence complex of $X_n^{(2)}$, $\ind\lno X_n^{(2)} \rno$ has the following homotopy type,
    $$\ind\lno X_n^{(2)} \rno \simeq \mathbb{S}^{n-1}.$$
\end{lemma}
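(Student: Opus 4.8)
The plan is to prove the statement by induction on $n$, establishing the single-suspension recursion $\ind\lno X_n^{(2)}\rno\simeq\Sigma\lno\ind\lno X_{n-1}^{(2)}\rno\rno$ and reading off the base case from the path formula. For the base case $n=1$, the graph $X_1^{(2)}$ consists of the vertices $v_1^1,v_1^2,v_2^2$ with the rung $v_1^1v_1^2$ and the bottom edge $v_1^2v_2^2$, so it is the path $P_3$; hence \Cref{thm: homotopy type of ind complex of path graph} gives $\ind\lno X_1^{(2)}\rno\simeq\mathbb{S}^0=\mathbb{S}^{1-1}$.

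For the inductive step, fix $n\geq 2$ and focus on the right end of $X_n^{(2)}$, where $v_{2n}^2$ is a leaf whose only neighbor is $v_{2n-1}^2$. First I would fold the corner vertices onto this leaf: since $N\lno v_{2n}^2\rno=\lcu v_{2n-1}^2\rcu$ is contained in both $N\lno v_{2n-1}^1\rno=\lcu v_{2n-2}^1,v_{2n-1}^2\rcu$ (the top-path neighbor together with the rung at the odd position $2n-1$) and in $N\lno v_{2n-2}^2\rno=\lcu v_{2n-3}^2,v_{2n-1}^2\rcu$ (the even position $2n-2$ carries no rung), the fold lemma (\Cref{foldlemma}) lets me delete $v_{2n-1}^1$ and then $v_{2n-2}^2$ without changing the homotopy type. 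A small check shows the second fold remains valid after the first, since removing $v_{2n-1}^1$ touches neither $N\lno v_{2n-2}^2\rno$ nor $N\lno v_{2n}^2\rno$.

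After these two folds, the vertex $v_{2n-1}^2$ retains only the neighbor $v_{2n}^2$, so $\lcu v_{2n-1}^2,v_{2n}^2\rcu$ splits off as an isolated $P_2$, while the remaining connected graph $R$ has top row $v_1^1,\dots,v_{2n-2}^1$ (with $v_{2n-2}^1$ now a leaf) and bottom row $v_1^2,\dots,v_{2n-3}^2$, with rungs at the odd positions $1,3,\dots,2n-3$. The disjoint-union lemma (a $P_2$ summand contributing one suspension) then yields $\ind\lno X_n^{(2)}\rno\simeq\Sigma\lno\ind\lno R\rno\rno$. The key remaining point is to recognize $R$: applying the top--bottom reflection $v_i^1\leftrightarrow v_i^2$ of the hexagonal strip carries $R$ exactly onto $X_{n-1}^{(2)}$ (the extra leaf moves from the top row to the bottom row, and the rung pattern is preserved), so $\ind\lno R\rno\simeq\ind\lno X_{n-1}^{(2)}\rno$ and the recursion $\ind\lno X_n^{(2)}\rno\simeq\Sigma\lno\ind\lno X_{n-1}^{(2)}\rno\rno$ follows. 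Iterating from the base case gives $\ind\lno X_n^{(2)}\rno\simeq\Sigma^{n-1}\mathbb{S}^0=\mathbb{S}^{n-1}$.

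The step I expect to require the most care is the identification of $R$ with $X_{n-1}^{(2)}$: one must track the neighborhoods precisely through both folds to confirm which row ends up carrying the extra leaf and that the surviving rung pattern is exactly the one defining $X_{n-1}^{(2)}$ (up to the reflection symmetry). The folds themselves and the splitting off of the $P_2$ component are routine once the labels are set up correctly.
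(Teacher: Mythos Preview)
Your proof is correct. You establish the single-step recursion $\ind\lno X_n^{(2)}\rno\simeq\Sigma\lno\ind\lno X_{n-1}^{(2)}\rno\rno$ via two folds and the top--bottom reflection, whereas the paper uses a double-step recursion $\ind\lno X_{t+1}^{(2)}\rno\simeq\Sigma^2\lno\ind\lno X_{t-1}^{(2)}\rno\rno$ obtained by performing four folds that split off two copies of $P_2$ and land back on $X_{t-1}^{(2)}$ without any reflection. In effect the paper carries out your step twice in one go, so that the intermediate reflection cancels and the identification with a smaller $X^{(2)}$ graph is immediate from the labels. Your route is arguably tidier: it needs only the single base case $n=1$ and half as many folds per inductive step, at the cost of having to verify the reflection isomorphism $R\cong X_{n-1}^{(2)}$ (which you do correctly---the rungs sit at odd positions, so swapping rows preserves them). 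The paper's route avoids that symmetry argument entirely but, strictly speaking, its two-step recursion also requires checking $n=2$ as a second base case.
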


\begin{proof}
    We prove this by induction on $n$. The result is immediate for $n=1$, since $X_1^{(2)}$ is precisely the path graph $P_3$, and we know that $\ind \lno P_3 \rno \simeq \bbS^0$ (using \Cref{thm: homotopy type of ind complex of path graph}). Thus, we have $$\ind \lno X_1^2 \rno \simeq \bbS^0.$$
    
    Assuming that the given result is true for all $n \leq t$, we aim to prove it for $n=t+1$. Observe that in $X_{n+1}^2$, we fold $v_{2t+1}^1$ and $v_{2t}^2$ using $v_{2t+2}^2$. Following the folding of $v_{2t+1}^1$, we fold $v_{2t-2}^1$ and $v_{2t-1}^2$ using $v_{2t}^1$ (see \Cref{fig: foldings of Xn2}).

    \begin{figure}[H]
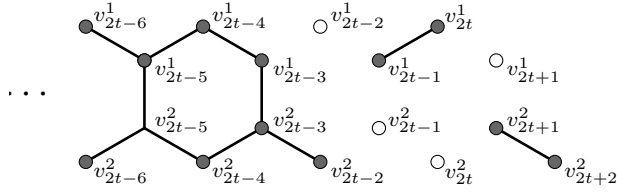

    \centering

        \caption{Foldings in the graph $X_{t+1}^2$.}
        \label{fig: foldings of Xn2}
    \end{figure}
    
    Therefore, using the fold Lemma (see \Cref{foldlemma}), we have $$\ind \lno X_{t+1}^2 \rno \simeq \ind \lno X_{t-1}^2 \sqcup P_2 \sqcup P_2 \rno \simeq \Sigma^2 \lno \ind\lno X_{t-1}^2 \rno \rno.$$

    Thus, using the induction hypothesis, the result holds for $n=t+1$, that is, $$\ind \lno X_{t+1}^2 \rno \simeq \bbS^t .$$

    This completes the proof \Cref{lemma: homotopy of Xn2}.    
\end{proof}

From the two results above, we obtain the following corollary.

\begin{corollary}\label{coro: homotopy type of Xn-2}
    For $n\geq 1$, $\ind \lno X_n^1 \rno \simeq \bbS^n$.
\end{corollary}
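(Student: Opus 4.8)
The plan is to read off the corollary by composing the two preceding lemmas, after first recording what their proofs actually establish about the suspension degree. The key point is that \Cref{lemma: homotopy of Xn-1}, through the folding of $v_{2n-2}^1$ and $v_{2n-1}^2$ onto $v_{2n}^1$, reduces $X_n^{(1)}$ to the disjoint union $X_{n-1}^{(2)} \sqcup P_2 \sqcup P_2$. Since each copy of $P_2$ contributes one suspension (because $\ind\lno P_2 \rno \simeq \bbS^0$ and joining with $\bbS^0$ is suspension), this gives $\ind\lno X_n^{(1)} \rno \simeq \Sigma^2\lno \ind\lno X_{n-1}^{(2)} \rno \rno$. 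I would therefore carry this $\Sigma^2$ (rather than a single $\Sigma$) into the computation, as it is exactly what makes the dimensions come out right.

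For $n \geq 2$ the index $n-1$ satisfies $n-1 \geq 1$, so \Cref{lemma: homotopy of Xn2} applies and yields $\ind\lno X_{n-1}^{(2)} \rno \simeq \bbS^{n-2}$. Substituting and using $\Sigma^k \bbS^m \simeq \bbS^{m+k}$,
$$\ind\lno X_n^{(1)} \rno \simeq \Sigma^2\lno \bbS^{n-2} \rno \simeq \bbS^n,$$
which is the assertion for $n \geq 2$.

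It remains to treat the base case $n=1$ by hand, since \Cref{lemma: homotopy of Xn-1} would require $X_0^{(2)}$, which lies outside the range $n \geq 1$ in which the graphs $X_{\bullet}^{(2)}$ were introduced. For $n=1$ I would identify the graph directly: deleting $v_3^1$ from $H_{1 \times 1 \times 1}$ leaves the five vertices $v_2^1, v_1^1, v_1^2, v_2^2, v_3^2$ strung together as the single path $v_2^1 - v_1^1 - v_1^2 - v_2^2 - v_3^2$, so $X_1^{(1)} \cong P_5$. \Cref{thm: homotopy type of ind complex of path graph} then gives $\ind\lno X_1^{(1)} \rno \simeq \bbS^1$, completing the case $n=1$. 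The only genuine obstacle here is bookkeeping rather than substance: one must propagate the correct suspension degree (the $\Sigma^2$ coming from the two detached copies of $P_2$) and dispatch the boundary value $n=1$ separately, so as not to invoke the recursion outside its valid range. An alternative to a separate base case is to adopt the convention $\ind\lno X_0^{(2)} \rno \simeq \bbS^{-1}$ (the empty complex, consistent with the formula $\bbS^{n-2}$ evaluated at $n=1$), under which $\Sigma^2\lno \bbS^{-1} \rno \simeq \bbS^1$ reproduces the same answer uniformly across all $n \geq 1$.
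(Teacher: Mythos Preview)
Your argument is correct and follows the same route as the paper, which simply records the corollary as an immediate consequence of \Cref{lemma: homotopy of Xn-1} and \Cref{lemma: homotopy of Xn2}. In fact you are more careful than the paper: you correctly extract the $\Sigma^2$ (rather than the single $\Sigma$ stated in \Cref{lemma: homotopy of Xn-1}, which is a typo given its proof), and you handle the boundary case $n=1$ explicitly via $X_1^{(1)}\cong P_5$, whereas the paper leaves this implicit.
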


With these results in hand, we now proceed to prove \Cref{thm: homotopy type of H11n}.

\begin{proof}[Proof of \Cref{thm: homotopy type of H11n}]
    Observe that it suffices to prove that the inclusion map $$\ind \lno X_n^{(2)} \rno \hookrightarrow \ind \lno X_n^{(1)} \rno$$ is null-homotopic, since the result then follows from \Cref{lemma: finding homotopy using link and deletion ind complex}, \Cref{lemma: homotopy of Xn-1}, and \Cref{lemma: homotopy of Xn2}.

    Using \Cref{lemma: homotopy of Xn2} and \Cref{coro: homotopy type of Xn-2}, it is clear that $\ind \lno X_n^{(2)} \rno$ is homotopy equivalent to a sphere with dimension one less than that of $\ind \lno X_n^{(1)} \rno$. Thus, the above inclusion map is null-homotopic. Therefore, using \Cref{lemma: finding homotopy using link and deletion ind complex}, we have
    \begin{align*}
        \ind \lno H_{1\times 1 \times n} \rno &\simeq \ind \lno X_n^{(1)} \rno \vee \Sigma \lno \ind \lno X_n^{(2)} \rno \rno\\
        &\simeq \bbS^n \vee \Sigma\lno \bbS^{n-1} \rno \\
        &\simeq \bbS^n \vee \bbS^n.
    \end{align*}

    This completes the proof \Cref{thm: homotopy type of H11n}.
\end{proof}

\section[Independence Complex of $H_{1\times 2 \times n}$]{Independence Complex of $\texorpdfstring{\bm{H_{1\times 2 \times n}}}{H_{1\times 2 \times n}}$}

In this section, we determine the homotopy type of the independence complex of the $\lno {1\times 2 \times n} \rno$-hexagonal grid graph (see \Cref{fig: graph of H(1_2_n) in ind complex}), $H_{1\times 2 \times n}$ (or, the \textit{double hexagonal line tiling}), that is, $\operatorname{Ind}\lno H_{1 \times 2 \times n}\rno$, for $n \geq 1$. For simplicity, we write $H_n^2 = H_{1 \times 2 \times n}$.

\begin{figure}[H]
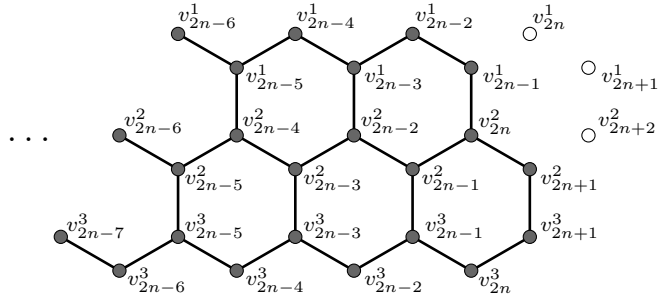

\centering

        \caption{The graph $Y_n$.}
        \label{fig: graph of Yn in ind complex}
    \end{figure}

\subsection{Independence complex of $Y_n$}

Here, we will determine the homotopy type of the independence complex of $Y_n$, denoted $\ind \lno Y_n \rno$. For this, we first discuss the homotopy type of the link and the deletion of the vertex $v_{2n-1}^2$ in $\ind \lno Y_n \rno$, that is, $\ind \lno Y_n \setminus N \lsq v_{2n-1}^2 \rsq \rno$ and $\ind \lno Y_n \setminus \lcu v_{2n-1}^2 \rcu \rno$, respectively.

\begin{lemma}\label{lemma: link of v(2t-1)2 in Yn}
    For $n \geq 3$, $\ind \lno Y_n \setminus N_{Y_n} \lsq v_{2n-1}^2 \rsq \rno \simeq \Sigma^4 \lno \ind \lno Y_{n-3} \rno \rno$.
\end{lemma}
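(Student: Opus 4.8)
The plan is to compute the graph $Y_n \setminus N_{Y_n}\lsq v_{2n-1}^2 \rsq$ explicitly, peel off the components that detach once the closed neighbourhood is removed, and then use the fold lemma (\Cref{foldlemma}) to collapse what remains onto a copy of $Y_{n-3}$. First I would determine the closed neighbourhood. Since $v_{2n-1}^2$ lies in the middle row with an \emph{odd} lower index, it is not an endpoint of any $B_1$-edge (those join $v^1_{2t-1}$ to $v^2_{2t}$), so its neighbours are only its two horizontal $A_2$-neighbours $v_{2n-2}^2, v_{2n}^2$ together with its $B_2$-neighbour $v_{2n-1}^3$ directly below it. Hence
$$N_{Y_n}\lsq v_{2n-1}^2 \rsq = \lcu v_{2n-2}^2,\ v_{2n-1}^2,\ v_{2n}^2,\ v_{2n-1}^3 \rcu.$$
Deleting these four vertices, I would observe that the three far-right vertices $v_{2n+1}^2, v_{2n}^3, v_{2n+1}^3$ are severed from the rest: $v_{2n+1}^2$ reached the graph only through $v_{2n}^2$, and $v_{2n}^3$ only through $v_{2n-1}^3$, while among themselves they form the path $v_{2n+1}^2 - v_{2n+1}^3 - v_{2n}^3$. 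Thus $Y_n \setminus N_{Y_n}\lsq v_{2n-1}^2 \rsq = G' \sqcup P_3$, where $G'$ is the induced subgraph on the surviving vertices.

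Next I would simplify $G'$ by a sequence of folds. Removing $v_{2n}^2$ strips the top-row end vertex $v_{2n-1}^1$ of its $B_1$-edge, leaving it a pendant on $v_{2n-2}^1$; likewise removing $v_{2n-1}^3$ leaves $v_{2n-2}^3$ a pendant on $v_{2n-3}^3$. Each such pendant has a single neighbour that is contained in the neighbourhood of an adjacent vertex, so the fold lemma applies, and folding away those adjacent vertices detaches the end of each row as an isolated $P_2$: concretely, $\lcu v_{2n-2}^1, v_{2n-1}^1 \rcu$ and $\lcu v_{2n-3}^3, v_{2n-2}^3 \rcu$ split off. After these first folds the top row again terminates in a pendant ($v_{2n-4}^1$), and iterating the same device once more detaches a third $P_2$, namely $\lcu v_{2n-5}^1, v_{2n-4}^1 \rcu$.

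The key verification is that the vertices consumed by these folds and detached $P_2$'s are exactly those whose removal turns the remaining induced subgraph into $Y_{n-3}$. Indeed, $Y_{n-3}$ is obtained from $H^2_{n-3}$ by deleting $v^1_{2(n-3)} = v_{2n-6}^1$, $v^1_{2(n-3)+1} = v_{2n-5}^1$, and $v^2_{2(n-3)+2} = v_{2n-4}^2$; I would check that $v_{2n-6}^1$ and $v_{2n-4}^2$ are precisely the vertices folded away on the top and middle rows, while $v_{2n-5}^1$ is absorbed into the third $P_2$. Since the edge rules $A_i, B_k$ depend only on the indices, the residual induced subgraph on the initial segments of the three rows coincides with $Y_{n-3}$ on the nose. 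Assembling the pieces via the disjoint-union lemma (\cite[Remark~2.2]{Barmakstarclusters}), each of the one $P_3$ and three $P_2$'s contributes a suspension (as $\ind(P_2) \simeq \ind(P_3) \simeq \mathbb{S}^0$), giving
$$\ind \lno Y_n \setminus N_{Y_n}\lsq v_{2n-1}^2 \rsq \rno \simeq \Sigma^4 \lno \ind \lno Y_{n-3} \rno \rno.$$

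The main obstacle I anticipate is the bookkeeping in the fold step: one must track how neighbourhoods change as vertices are successively removed and confirm, at each fold, that the hypothesis $N(v) \subseteq N(w)$ holds in the \emph{current} graph rather than in the original $Y_n$, and then check that the leftover induced subgraph equals $Y_{n-3}$ exactly, not merely up to homotopy. I would also treat the boundary value $n = 3$ separately, where $Y_{n-3} = Y_0$ degenerates and the top row shrinks away, verifying the claim there by direct computation.
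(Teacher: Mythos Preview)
Your proposal is correct and follows essentially the same route as the paper. The paper also removes $N_{Y_n}[v_{2n-1}^2]$ and then performs the identical fold sequence on the main body (fold $v_{2n-3}^1$ using $v_{2n-1}^1$; then $v_{2n-6}^1$ and $v_{2n-4}^2$ using $v_{2n-4}^1$; and $v_{2n-3}^2$, $v_{2n-4}^3$ using $v_{2n-2}^3$), arriving at $Y_{n-3}$ together with four copies of $P_2$. The only cosmetic difference is that the paper folds $v_{2n}^3$ using $v_{2n+1}^2$ to turn your detached $P_3$ into a $P_2$, whereas you keep it as a $P_3$; since $\ind(P_3)\simeq\ind(P_2)\simeq\mathbb{S}^0$, both yield $\Sigma^4(\ind(Y_{n-3}))$. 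One small point of bookkeeping: to detach your bottom $P_2=\{v_{2n-3}^3,v_{2n-2}^3\}$ you must fold \emph{two} vertices, $v_{2n-4}^3$ and $v_{2n-3}^2$, not just one, so your tally of folded middle-row vertices should include $v_{2n-3}^2$ as well as $v_{2n-4}^2$; with that correction the residual graph is exactly $Y_{n-3}$.
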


\begin{figure}[H]
\centering

    \caption{The graph $Y_n \setminus N_{Y_n} \lsq v_{2n-1}^2 \rsq$.}
    \label{fig: graph of Yn - N[v(2n-1)2] in ind complex}
\end{figure}

\begin{proof}
    Observe that in $Y_n \setminus N_{Y_n} \lsq v_{2n-1}^2 \rsq$ (see \Cref{fig: graph of Yn - N[v(2n-1)2] in ind complex}), we fold $v_{2n}^3$ using $v_{2n+1}^2$. Similarly, we fold $v_{2n-3}^1$ using $v_{2n-1}^1$. Once $v_{2n-3}^1$ is folded, we fold $v_{2n-6}^1$ and $v_{2n-4}^2$ using $v_{2n-4}^1$. On the other hand, following the folding of $v_{2n}^3$, we fold $v_{2n-3}^2$ and $v_{2n-4}^3$ using $v_{2n-2}^3$ (see \Cref{fig: foldings in Yn - N[v(2n-1)2]}).

    \begin{figure}[H]
    \centering

        \caption{Folding in the graph $Y_n \setminus N_{Y_n} \lsq v_{2n-1}^2 \rsq$.}
        \label{fig: foldings in Yn - N[v(2n-1)2]}
    \end{figure}
    
    Therefore, using the fold Lemma (\Cref{foldlemma}), we have 
    \begin{align*}
        \ind \lno Y_n \setminus N_{Y_n} \lsq v_{2n-1}^2 \rsq \rno &\simeq \ind \lno Y_{n-3} \sqcup P_2 \sqcup P_2\sqcup P_2\sqcup P_2\rno\\
        &\simeq \Sigma^4 \lno \ind \lno Y_{n-3} \rno \rno.
    \end{align*}

    This completes the proof of \Cref{lemma: link of v(2t-1)2 in Yn}.
\end{proof}

\begin{lemma}\label{lemma: deletion of v(2t-1)2 in Yn}
    For $n \geq 5$, $$\ind \lno Y_n \setminus \lcu v_{2n-1}^2 \rcu \rno \simeq \Sigma^4 \lno \ind \lno H_{n-3}^2 \rno \rno \vee \Sigma^6 \lno \ind \lno H_{n-4}^2 \rno \rno.$$
\end{lemma}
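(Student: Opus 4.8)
The plan is to apply the link–deletion decomposition of \Cref{lemma: finding homotopy using link and deletion ind complex} to the graph $G := Y_n \setminus \lcu v_{2n-1}^2 \rcu$ at a suitable vertex of its right-hand tail, and to simplify each of the two resulting branches using the fold lemma (\Cref{foldlemma}). Deleting $v_{2n-1}^2$ removes the only rung joining the middle and bottom rows near the right boundary, so the vertices $v_{2n}^2, v_{2n+1}^2, v_{2n+1}^3, v_{2n}^3$ now form a short induced path dangling off the bulk of the graph. I would take $u = v_{2n+1}^3$, whose closed neighbourhood in $G$ is $N_G\lsq v_{2n+1}^3 \rsq = \lcu v_{2n+1}^3, v_{2n}^3, v_{2n+1}^2 \rcu$, so that
$$\ind \lno G \rno \simeq \ind \lno G \setminus \lcu v_{2n+1}^3 \rcu \rno \vee \Sigma \lno \ind \lno G \setminus N_G \lsq v_{2n+1}^3 \rsq \rno \rno,$$
provided the inclusion of the link into the deletion is null-homotopic.

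Next I would compute each branch by folding. In the deletion branch $G \setminus \lcu v_{2n+1}^3 \rcu$, removing $v_{2n+1}^3$ leaves $v_{2n}^3$ and $v_{2n+1}^2$ as pendant vertices; folding these (and the further vertices they expose) via \Cref{foldlemma} should peel off four copies of $P_2$ and reduce the remaining core to an induced copy of $H_{n-3}^2$, yielding $\ind \lno G \setminus \lcu v_{2n+1}^3 \rcu \rno \simeq \Sigma^4 \lno \ind \lno H_{n-3}^2 \rno \rno$. In the link branch $G \setminus N_G\lsq v_{2n+1}^3 \rsq$, removing the three vertices $v_{2n+1}^3, v_{2n}^3, v_{2n+1}^2$ exposes $v_{2n}^2$ and $v_{2n-1}^3$ as pendants; the analogous sequence of folds should split off five copies of $P_2$ and leave the shorter core $H_{n-4}^2$, so that $\ind \lno G \setminus N_G\lsq v_{2n+1}^3 \rsq \rno \simeq \Sigma^5 \lno \ind \lno H_{n-4}^2 \rno \rno$. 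Substituting these into the decomposition above gives exactly $\Sigma^4 \lno \ind \lno H_{n-3}^2 \rno \rno \vee \Sigma^6 \lno \ind \lno H_{n-4}^2 \rno \rno$.

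It remains to justify the null-homotopy of the inclusion $\ind \lno G \setminus N_G\lsq v_{2n+1}^3 \rsq \rno \hookrightarrow \ind \lno G \setminus \lcu v_{2n+1}^3 \rcu \rno$, and this is the step I expect to be the main obstacle, for two reasons. First, the whole argument rests on the fold bookkeeping: I must check that each claimed fold is valid (every folded-away vertex $w$ has a partner $v$ with $N(v) \subseteq N(w)$), that exactly four, respectively five, copies of $P_2$ are isolated, and that the surviving cores are genuinely isomorphic to $H_{n-3}^2$ and $H_{n-4}^2$ as labelled induced subgraphs. Second, for the null-homotopy itself I would use the dimension-comparison method: from the homotopy types of $\ind \lno H_{n-3}^2 \rno$ and $\ind \lno H_{n-4}^2 \rno$ (available inductively, cf. \Cref{thm: homotopy type of Hn2}), the spheres of $\Sigma^4 \lno \ind \lno H_{n-3}^2 \rno \rno$ should have strictly larger dimension than those of $\Sigma^5 \lno \ind \lno H_{n-4}^2 \rno \rno$, forcing the inclusion to be null-homotopic. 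Verifying this strict inequality requires the precise growth of the top dimension of $\ind \lno H_m^2 \rno$ in $m$; should strictness fail in a borderline small case, I would instead route the inclusion through a contractible subcomplex using \Cref{prop: null-homotopy through factor}. Once the null-homotopy is established, \Cref{lemma: finding homotopy using link and deletion ind complex} delivers the claimed wedge.
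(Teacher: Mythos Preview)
Your overall plan is sound, but the specific vertex you pick for the link--deletion decomposition does not produce the reductions you claim. The path $v_{2n}^2 - v_{2n+1}^2 - v_{2n+1}^3 - v_{2n}^3$ is \emph{not} a tail dangling off one point: $v_{2n}^2$ is still adjacent to $v_{2n-1}^1$ and $v_{2n}^3$ is still adjacent to $v_{2n-1}^3$, so this path is a bridge attached at two places. Consequently, in the deletion $G\setminus\lcu v_{2n+1}^3\rcu$ the natural fold sequence (fold $v_{2n-1}^1$ using $v_{2n+1}^2$, fold $v_{2n-2}^3$ using $v_{2n}^3$, then fold $v_{2n-4}^1$ and $v_{2n-2}^2$ using $v_{2n-2}^1$) peels off only \emph{three} copies of $P_2$, and the surviving core is exactly $Y_{n-2}$, not $H_{n-3}^2$. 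A vertex count confirms this: $G\setminus\lcu v_{2n+1}^3\rcu$ has $6n-1$ vertices, and $H_{n-3}^2\sqcup 4P_2$ has $6n-6$, so four folds cannot possibly yield that target. The link branch at $v_{2n+1}^3$ is similarly off; after the obvious folds a hexagon remains attached to the core rather than a clean $H_{n-4}^2$.

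The paper instead splits at $v_{2n-1}^3$, whose closed neighbourhood also removes $v_{2n-1}^2$'s former neighbour in the bottom row and so disconnects things in the right way: the deletion folds to $H_{n-3}^2\sqcup 4P_2$ and the link to $H_{n-4}^2\sqcup 5P_2$. For the null-homotopy the paper does \emph{not} use dimension comparison (which would be circular here, since the sphere dimensions in $\ind\lno H_m^2\rno$ are only established in \Cref{thm: homotopy type of Hn2}, which in turn relies on this lemma); rather it factors the inclusion through $\ind\lno Y_n^{(4)}\rno$ with $Y_n^{(4)}=Y_n^{(1)}\setminus\lcu v_{2n-1}^3,v_{2n}^3\rcu$, shows by folds that $Y_n^{(4)}$ acquires an isolated vertex and hence $\ind\lno Y_n^{(4)}\rno$ is contractible, and then invokes \Cref{prop: null-homotopy through factor}. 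If you retain your overall strategy, switching the pivot to $v_{2n-1}^3$ and using a contractible intermediate for the null-homotopy will make the argument go through.
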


\begin{proof}
    Let $Y_n^{(1)} = Y_n \setminus \lcu v_{2n-1}^2 \rcu$ (see \Cref{fig: graph of Yn - v(2n-1)2 in ind complex}).

    \begin{figure}[H]
    \centering

        \caption{The graph $Y_n^1 = Y_n \setminus \lcu v_{2n-1}^2 \rcu$.}
        \label{fig: graph of Yn - v(2n-1)2 in ind complex}
    \end{figure}
    
    We first discuss the homotopy type of

    \begin{enumerate}
        \item $\ind \lno Y_n^{(1)} \setminus  N_{Y_n^{(1)}} \lsq v_{2n-1}^3 \rsq \rno$ and,
        \item $\ind \lno Y_n^{(1)} \setminus \lcu v_{2n-1}^3 \rcu \rno$.
    \end{enumerate}
    
    Later, we show that the inclusion map $$\ind \lno Y_n^{(1)} \setminus  N_{Y_n^{(1)}} \lsq v_{2n-1}^3 \rsq \rno \hookrightarrow \ind \lno Y_n^{(1)} \setminus \lcu v_{2n-1}^3 \rcu \rno,$$ is null-homotopic.

    \caseheading{Homotopy type of $\bm{{\ind} \lno Y_{n}^{(1)} \setminus  N_{Y_{n}^{(1)}} \lsq v_{2n-1}^3 \rsq \rno}$:}

    \begin{figure}[H]
    \centering

        \caption{The graph $Y_{n}^{(2)} =  Y_n^{(1)} \setminus  N_{Y_n^{(1)}} \lsq v_{2n-1}^3 \rsq$.}
        \label{fig: graph of Yn1 - v(2n-1)3 in ind complex}
    \end{figure}
    
    Let $Y_{n}^{(2)} =  Y_n^{(1)} \setminus  N_{Y_n^{(1)}} \lsq v_{2n-1}^3 \rsq$ (see \Cref{fig: graph of Yn1 - v(2n-1)3 in ind complex}). Observe that in $Y_{n}^{(2)}$, we fold $v_{2n}^2$ using $v_{2n+1}^3$. Once $v_{2n}^2$ is folded, we fold $v_{2n-3}^1$ using $v_{2n-1}^1$. After the folding of $v_{2n-3}^1$, we fold $v_{2n-6}^1$ and $v_{2n-4}^2$ using $v_{2n-4}^1$, and $v_{2n-3}^3$ using $v_{2n-2}^2$. Finally, following the folding of $v_{2n-3}^3$, we fold $v_{2n-5}^2$ and $v_{2n-6}^3$ using $v_{2n-4}^3$ (see \Cref{fig: foldings in Yn2}).

    \begin{figure}[H]
    \centering

        \caption{Foldings in the graph $Y_{n}^{(2)}$.}
        \label{fig: foldings in Yn2}
    \end{figure}

    Therefore, using the fold Lemma (\Cref{foldlemma}), we have 
    \begin{align*}
        \ind \lno Y_n^{(2)} \rno &\simeq \ind \lno H_{n-4}^2 \sqcup P_2 \sqcup P_2 \sqcup P_2\sqcup P_2\sqcup P_2\rno\\
        &\simeq \Sigma^5 \lno \ind \lno H_{n-4}^2 \rno \rno.
    \end{align*}

    Hence, 
    \begin{align}\label{eqn: link of c(2t-1) in Yn(1)}
        \ind \lno Y_n^{(1)} \setminus  N_{Y_n^{(1)}} \lsq v_{2n-1}^3 \rsq \rno \simeq \Sigma^5 \lno \ind \lno H_{n-4}^2 \rno \rno.
    \end{align}

    \caseheading{Homotopy type of $\bm{{\ind} \lno Y_{n}^{(1)} \setminus \lcu v_{2n-1}^3 \rcu \rno}$:}

    \begin{figure}[H]
    \centering

        \caption{Foldings in the graph $Y_{n}^{(3)}$.}
        \label{fig: foldings in Yn3}
    \end{figure}

    Therefore, using the fold Lemma (\Cref{foldlemma}), we have
    \begin{align*}
        \ind \lno Y_n^{(3)} \rno &\simeq \ind \lno H_{n-3}^2 \sqcup P_2 \sqcup P_2 \sqcup P_2\sqcup P_2\rno\\
        &\simeq \Sigma^4 \lno H_{n-3}^2 \rno.
    \end{align*}

     Hence,
     \begin{align}\label{eqn: deletion of c(2t-1) in Yn(1)}
         \ind \lno Y_n^{(1)} \setminus \lcu v_{2n-1}^3 \rcu \rno \simeq \Sigma^4 \lno H_{n-3}^2 \rno.
     \end{align}

    \caseheading{The inclusion map $\bm{{\ind} \lno Y_{n}^{(1)} \setminus  N_{Y_{n}^{(1)}} \lsq v_{2n-1}^3 \rsq \rno} \hookrightarrow \bm{{\ind} \lno Y_{n}^{(1)} \setminus \lcu v_{2n-1}^3 \rcu \rno}$  is null-homotopic:}

    To prove this, we consider the induced subgraph $Y_n^{(4)} = Y_n^{(1)} \setminus \lcu v_{2n-1}^3,\ v_{2n}^3 \rcu$, that is, $Y_n^{(4)}$ is precisely $Y_n^{(2)} \cup \lcu v_{2n-2}^3 \rcu$ (see \Cref{fig: graph of Yn4 in ind complex}).

    \begin{figure}[H]
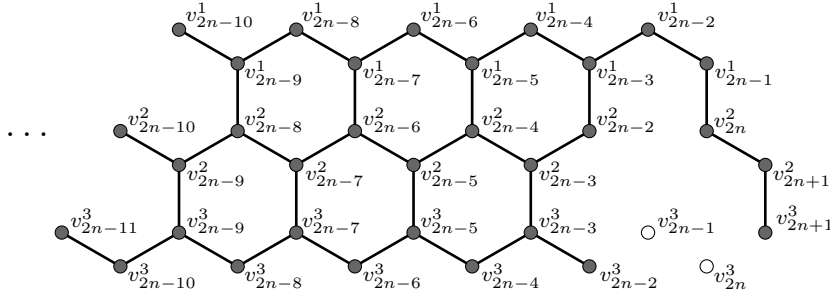

    \centering

        \caption{The graph $Y_n^{(4)} = Y_n^{(1)} \setminus \lcu v_{2n-1}^3,\ v_{2n}^3 \rcu$.}
        \label{fig: graph of Yn4 in ind complex}
    \end{figure}
    
    We then have the following inclusion map, 
    $$\ind \lno Y_n^{(1)} \setminus  N_{Y_n^{(1)}} \lsq v_{2n-1}^3 \rsq \rno \hookrightarrow \ind \lno Y_n^{(4)} \rno \hookrightarrow \ind \lno Y_n^{(1)} \setminus \lcu v_{2n-1}^3 \rcu \rno.$$

    Observe that in $Y_n^{(4)}$, we fold $v_{2n-3}^2$ and $v_{2n-4}^3$ using $v_{2n-2}^3$. Once $v_{2n-3}^2$ is folded, we fold $v_{2n-4}^1$ and $v_{2n-2}^1$ using $v_{2n-2}^2$. At last, following the folding of $v_{2n-2}^1$, we fold $v_{2n+1}^2$ using $v_{2n-1}^1$. This whole procedure leaves $v_{2n+1}^3$ isolated (see \Cref{fig: foldings in Yn4}). 
    
    \begin{figure}[H]
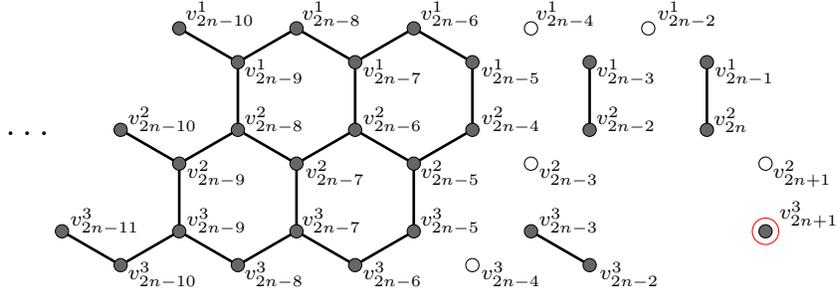

    \centering

        \caption{Foldings in the graph $Y_{n}^{(4)}$.}
        \label{fig: foldings in Yn4}
    \end{figure}
    
    Now, applying the fold Lemma (\Cref{foldlemma}), we get
    \begin{align*}
        \ind \lno Y_n^{(4)} \rno &\simeq \ind \lno H_{n-3}^2 \sqcup P_2 \sqcup P_2 \sqcup P_2\sqcup \lcu v_{2n+1}^3 \rcu \rno\\
        &\simeq \lno \Sigma^3 \lno \ind \lno H_{n-3}^2 \rno \rno \rno * \lcu v_{2n+1}^3 \rcu\\
        &\simeq pt.
    \end{align*}

    In other words, $\ind \lno Y_n^{(4)} \rno$ is contractible. Together with \Cref{prop: null-homotopy through factor}, this implies that the inclusion map,
    $$\ind \lno Y_n^{(1)} \setminus  N_{Y_n^{(1)}} \lsq v_{2n-1}^3 \rsq \rno \hookrightarrow \ind \lno Y_n^{(4)} \rno,$$
    is null-homotopic. Which in turn implies that, $$\ind \lno Y_n^{(1)} \setminus  N_{Y_n^{(1)}} \lsq v_{2n-1}^3 \rsq \rno \hookrightarrow \ind \lno Y_n^{(1)} \setminus \lcu v_{2n-1}^3\rcu \rno,$$
    is null-homotopic. Consequently, using \Cref{lemma: finding homotopy using link and deletion ind complex}, \Cref{eqn: link of c(2t-1) in Yn(1)} and \Cref{eqn: deletion of c(2t-1) in Yn(1)}, we have
    \begin{align*}
        \ind\lno Y_n^{(1)} \rno &\simeq \ind \lno Y_n^{{(1)}} \setminus \lcu v_{2n-1}^3 \rcu \rno \vee \Sigma \lno \ind \lno Y_n^{(1)} \setminus  N_{Y_n^{(1)}}\lsq v_{2n-1}^3 \rsq \rno \rno\\
        &\simeq \Sigma^4 \lno \ind \lno H_{n-3}^2 \rno \rno \vee \Sigma\lno \Sigma^5\lno \ind \lno H_{n-4}^2 \rno \rno\rno\\
        &\simeq \Sigma^4 \lno \ind \lno H_{n-3}^2 \rno \rno \vee \Sigma^6 \lno \ind \lno H_{n-4}^2 \rno \rno
    \end{align*}

    Hence, $$\ind \lno Y_n \setminus \lcu v_{2n-1}^2 \rcu \rno \simeq \Sigma^4 \lno \ind \lno H_{n-3}^2 \rno \rno \vee \Sigma^6 \lno \ind \lno H_{n-4}^2 \rno \rno.$$

    This completes the proof of \Cref{{lemma: deletion of v(2t-1)2 in Yn}}.
\end{proof}

With these two results in hand, we now state and prove the following theorem, which determines the homotopy type of $\ind \lno Y_n \rno$. Recall, $$Y_n = H_n^2 \setminus \lno v_{2n}^1, v_{2n+1}^1, v_{2n+2}^2 \rno.$$

\begin{theorem}\label{thm: homotopy type of Yn in Hn2}
    For $n\geq 1$, 

        $$\ind \lno Y_n \rno \simeq \begin{cases}
            \mathbb{S}^1, & \text{if } n=1;\\
            \mathbb{S}^3 \vee \mathbb{S}^3 \vee \mathbb{S}^3, & \text{if } n=2;\\
            \mathbb{S}^5 \vee \mathbb{S}^5, & \text{if } n=3;\\
            \mathbb{S}^6 \vee \mathbb{S}^6 \vee \mathbb{S}^6, & \text{if } n=4;\\
            \Sigma^4 \lno \ind \lno H_{n-3}^2 \rno \rno \vee \Sigma^6 \lno \ind \lno H_{n-4}^2 \rno \rno \vee \Sigma^5 \lno \ind \lno Y_{n-3} \rno \rno, & \text{if } n \geq 5.
        \end{cases}$$
\end{theorem}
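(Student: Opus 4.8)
The plan is to apply \Cref{lemma: finding homotopy using link and deletion ind complex} to the vertex $v_{2n-1}^2$ of $Y_n$, exactly as in the two preceding lemmas. The link $\ind\lno Y_n \setminus N_{Y_n}\lsq v_{2n-1}^2 \rsq \rno \simeq \Sigma^4\lno \ind\lno Y_{n-3} \rno \rno$ is supplied by \Cref{lemma: link of v(2t-1)2 in Yn}, and the deletion $\ind\lno Y_n \setminus \lcu v_{2n-1}^2 \rcu \rno \simeq \Sigma^4\lno \ind\lno H_{n-3}^2 \rno \rno \vee \Sigma^6\lno \ind\lno H_{n-4}^2 \rno \rno$ by \Cref{lemma: deletion of v(2t-1)2 in Yn}. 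Granting that the inclusion of the link into the deletion is null-homotopic, \Cref{lemma: finding homotopy using link and deletion ind complex} gives, for $n \geq 5$,
\begin{align*}
    \ind\lno Y_n \rno &\simeq \ind\lno Y_n \setminus \lcu v_{2n-1}^2 \rcu \rno \vee \Sigma\lno \ind\lno Y_n \setminus N_{Y_n}\lsq v_{2n-1}^2 \rsq \rno \rno\\
    &\simeq \Sigma^4\lno \ind\lno H_{n-3}^2 \rno \rno \vee \Sigma^6\lno \ind\lno H_{n-4}^2 \rno \rno \vee \Sigma^5\lno \ind\lno Y_{n-3} \rno \rno,
\end{align*}
which is the asserted formula. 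Since the deletion computation requires $n \geq 5$, the four smaller values $n = 1, 2, 3, 4$ must be treated as separate base cases.

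To establish the null-homotopy for $n \geq 5$, I would use the factoring method of \Cref{prop: null-homotopy through factor}, mirroring the argument used inside \Cref{lemma: deletion of v(2t-1)2 in Yn}. The neighbors of $v_{2n-1}^2$ in $Y_n$ are $v_{2n-2}^2$, $v_{2n}^2$, and $v_{2n-1}^3$, so the link graph $Y_n \setminus N_{Y_n}\lsq v_{2n-1}^2 \rsq$ and the deletion graph $Y_n \setminus \lcu v_{2n-1}^2 \rcu$ differ in exactly these three vertices. The plan is to interpose an induced subgraph $G'$ with $Y_n \setminus N_{Y_n}\lsq v_{2n-1}^2 \rsq \subseteq G' \subseteq Y_n \setminus \lcu v_{2n-1}^2 \rcu$, obtained by restoring one of these neighbors, and to show that $\ind\lno G' \rno$ is contractible. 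This contractibility would follow by repeatedly applying the fold Lemma (\Cref{foldlemma}) until the restored vertex (or a descendant of it) is left isolated, so that $\ind\lno G'\rno$ is a cone and hence contractible. Since the inclusion of the link into the deletion factors through $\ind\lno G'\rno$, \Cref{prop: null-homotopy through factor} then yields the desired null-homotopy.

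For the base cases $n = 1, 2, 3, 4$, I would compute $\ind\lno Y_n\rno$ directly. Each $Y_n$ is a fixed finite graph, so repeatedly deleting dominated vertices via the fold Lemma (\Cref{foldlemma}) and splitting off copies of $P_2$ (each contributing a single suspension) reduces the problem to the independence complex of a path or a cycle, whose homotopy type is read off from \Cref{thm: homotopy type of ind complex of path graph} and \Cref{thm: homotopy type of ind complex of cycle graph}. These computations yield $\mathbb{S}^1$, $\mathbb{S}^3 \vee \mathbb{S}^3 \vee \mathbb{S}^3$, $\mathbb{S}^5 \vee \mathbb{S}^5$, and $\mathbb{S}^6 \vee \mathbb{S}^6 \vee \mathbb{S}^6$ for $n = 1, 2, 3, 4$ respectively. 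They also serve as the seeds that make the recursion well-founded, since the indices $n-3$ and $n-4$ appearing for $n \geq 5$ fall into the range already handled by these cases and by the companion computation of $\ind\lno H_m^2\rno$.

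The main obstacle is the null-homotopy step: choosing the correct neighbor of $v_{2n-1}^2$ to restore and verifying that the resulting intermediate graph $G'$ really does collapse, under iterated folding, to a disjoint union containing an isolated vertex. As the figures accompanying \Cref{lemma: deletion of v(2t-1)2 in Yn} illustrate, the bookkeeping of which vertices become dominated after each fold is delicate and must be carried out explicitly on the labelled graph. Once this contractibility is in place, the two supporting lemmas and the final wedge identity combine routinely to give the stated homotopy type.
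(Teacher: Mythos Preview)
Your proposal is correct and follows essentially the same route as the paper: apply \Cref{lemma: finding homotopy using link and deletion ind complex} at $v_{2n-1}^2$, invoke \Cref{lemma: link of v(2t-1)2 in Yn} and \Cref{lemma: deletion of v(2t-1)2 in Yn}, and establish null-homotopy by factoring through a contractible intermediate obtained by restoring one neighbor of $v_{2n-1}^2$ to the link graph. The paper makes the specific choice $G' = Y_n^{(5)} = Y_n \setminus \lcu v_{2n-1}^2, v_{2n}^2, v_{2n-1}^3 \rcu$ (restoring $v_{2n-2}^2$) and shows via folds that $v_{2n-1}^1$ becomes isolated; the base cases are handled just as you describe, with $n=1$ dispatched via the isomorphism $Y_1 \cong X_2^{(2)}$.
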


\begin{proof}
    For the case $n = 1$, observe that $Y_1$ is isomorphic to the graph $X_{2}^{(2)}$ (see \Cref{fig: graph of Xn2 - ind complex}). Hence, the result follows directly from \Cref{lemma: homotopy of Xn2}. We skip the details of the proof for the cases when $n = 2,3$ and $4$, since the proof proceeds in the same way as in \Cref{lemma: link of v(2t-1)2 in Yn} and \Cref{lemma: deletion of v(2t-1)2 in Yn}. That is, we analyze the link and deletion of the vertex $v_{2n-1}$ and perform a sequence of foldings analogous to those used earlier in each case. The only difference is that the resulting intermediate subgraphs are slightly different, although each of them is simple enough that the fold lemma can be applied again to determine their homotopy types. Moreover, showing that the inclusion map is null-homotopic becomes easier for these cases, since the link has homotopy type with spheres of dimension strictly smaller than that of the deletion.
    
    For the case when $n \geq 5$, observe that it suffices to prove that the inclusion map
    $$\ind \lno Y_n \setminus  N_{Y_n} \lsq v_{2n-1}^2 \rsq \rno \hookrightarrow \ind \lno Y_n \setminus \lcu v_{2n-1}^2 \rcu \rno$$ is null-homotopic, since the result then follows from \Cref{lemma: finding homotopy using link and deletion ind complex}, and \Cref{lemma: link of v(2t-1)2 in Yn}, \Cref{lemma: deletion of v(2t-1)2 in Yn}.

    For this, we consider the induced subgraph $Y_n^{(5)} = Y_n \setminus \lcu v_{2n-1}^2, v_{2n}^2, v_{2n-1}^3 \rcu$ (see \Cref{fig: graph of Yn5 in ind complex}).
    
    \begin{figure}[H]
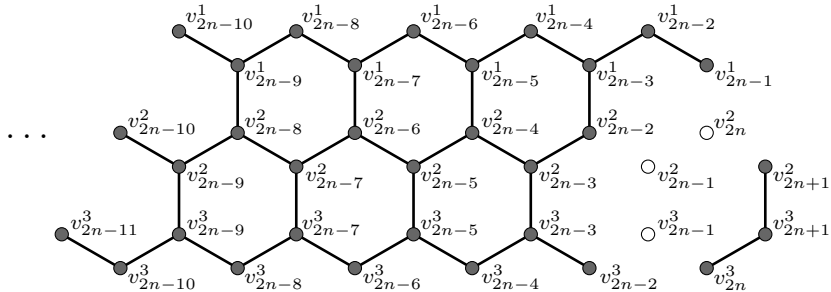

    \centering

        \caption{The graph $Y_n^{(5)} = Y_n \setminus \lcu v_{2n-1}^2, v_{2n}^2, v_{2n-1}^3 \rcu$.}
        \label{fig: graph of Yn5 in ind complex}
    \end{figure}
    
    We then have the following inclusion map, 
    $$\ind \lno Y_n \setminus  N_{Y_n} \lsq v_{2n-1}^2 \rsq \rno \hookrightarrow \ind \lno Y_n^{(5)} \rno \hookrightarrow \ind \lno Y_n\setminus \lcu v_{2n-1}^2 \rcu \rno.$$

    At first, note that we fold $v_{2n}^3$ using $v_{2n+1}^2$ in $Y_n^{(5)}$. On the other hand, we fold $v_{2n-3}^2$ and $v_{2n-4}^3$ using $v_{2n-2}^3$. Once $v_{2n-3}^2$ is folded, we fold $v_{2n-4}^1$ and $v_{2n-2}^1$ using $v_{2n-2}^2$. Observe that this whole procedure leaves $v_{2n-1}^1$ isolated (see \Cref{fig: foldings in Yn5}).

    \begin{figure}[H]
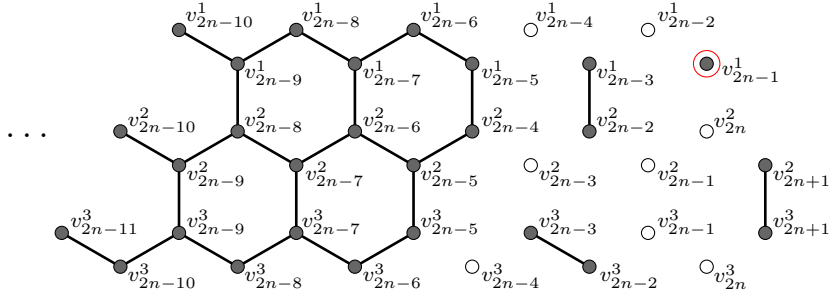

    \centering

        \caption{Foldings in the graph $Y_n^{(5)}$.}
        \label{fig: foldings in Yn5}
    \end{figure}
    
    Thus, using the fold Lemma (\Cref{foldlemma}), we have
    \begin{align*}
        \ind \lno Y_n^{(5)} \rno &\simeq \ind \lno H_{n-3}^2 \sqcup P_2 \sqcup P_2 \sqcup P_2\sqcup \lcu v_{2n-1}^1 \rcu \rno\\
        &\simeq \lno \Sigma^3 \lno \ind \lno H_{n-3}^2 \rno \rno \rno * \lcu v_{2n-1}^1 \rcu\\
        &\simeq pt.
    \end{align*}

    In other words, $\ind \lno Y_n^{(5)} \rno$ is contractible. Together with \Cref{prop: null-homotopy through factor}, this implies that the inclusion map,
    $$\ind \lno Y_n \setminus  N_{Y_n} \lsq v_{2n-1}^2 \rsq \rno \hookrightarrow \ind \lno Y_n^{(5)} \rno,$$
    is null-homotopic. Which in turn implies that, $$\ind \lno Y_n \setminus  N_{Y_n} \lsq v_{2n-1}^2 \rsq \rno \hookrightarrow \ind \lno Y_n \setminus \lcu v_{2n-1}^2\rcu \rno,$$
    is null-homotopic. This completes the proof of \Cref{thm: homotopy type of Yn in Hn2}.
\end{proof}

\subsection{Independence complex of $H_{n}^{2}$}

We now discuss the homotopy type of the independence complex of $H_{n}^{2}$, denoted $\ind \lno H_n^2 \rno$. As before, we first discuss the homotopy type of the link and deletion of the vertex $v_{2n}^2$ in $\ind \lno H_n^2 \rno$, that is, $\ind \lno H_n^2 \setminus N_{H_n^2} \lsq v_{2n}^2 \rsq\rno$ and $\ind \lno H_n^2 \setminus \lcu v_{2n}^2 \rcu\rno$, respectively.

\begin{lemma}\label{lemma: link of b(2t) in Hn2}
    For $n\geq 4$, $\ind \lno H_n^2 \setminus  N_{H_n^2} \lsq v_{2n}^2 \rsq \rno \simeq \Sigma^4 \lno \ind \lno H_{n-3}^2 \rno \rno$.
\end{lemma}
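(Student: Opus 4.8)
The plan is to work directly with the graph $H_n^2 \setminus N_{H_n^2}\lsq v_{2n}^2 \rsq$ and reduce it, by a sequence of folds, to the disjoint union of $H_{n-3}^2$ with four copies of $P_2$. The first step is to record the closed neighborhood. Since $v_{2n}^2$ is an even-indexed vertex of the middle row, its only cross edge is the $B_1$-edge to $v_{2n-1}^1$, so
$$N_{H_n^2}\lsq v_{2n}^2 \rsq = \lcu v_{2n-1}^1,\ v_{2n-1}^2,\ v_{2n}^2,\ v_{2n+1}^2 \rcu.$$
Deleting these four vertices detaches the right boundary into pendant configurations, which is precisely the setting in which the fold Lemma (\Cref{foldlemma}) can be applied repeatedly, exactly as in \Cref{lemma: link of v(2t-1)2 in Yn}.

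The heart of the argument is the explicit fold sequence, which I would carry out in order. In $H_n^2 \setminus N_{H_n^2}\lsq v_{2n}^2\rsq$ the vertex $v_{2n+2}^2$ is a leaf at $v_{2n+1}^1$, so I fold $v_{2n}^1$ using $v_{2n+2}^2$, splitting off $\lcu v_{2n+1}^1, v_{2n+2}^2 \rcu \cong P_2$. Likewise $v_{2n+1}^3$ is a leaf at $v_{2n}^3$, so I fold $v_{2n-1}^3$ using $v_{2n+1}^3$, splitting off $\lcu v_{2n}^3, v_{2n+1}^3 \rcu \cong P_2$. Once $v_{2n-1}^3$ is removed, $v_{2n-2}^3$ becomes a leaf, and I fold both $v_{2n-4}^3$ and $v_{2n-3}^2$ using $v_{2n-2}^3$, splitting off $\lcu v_{2n-2}^3, v_{2n-3}^3 \rcu \cong P_2$. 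Finally, the removal of $v_{2n-3}^2$ turns $v_{2n-2}^2$ into a leaf at $v_{2n-3}^1$, and I fold both $v_{2n-2}^2$ and $v_{2n-4}^1$ using $v_{2n-2}^1$, splitting off $\lcu v_{2n-3}^1, v_{2n-2}^1\rcu \cong P_2$.

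After these folds the remaining graph is supported on the vertices of index at most $2n-5$ in rows $1$ and $3$ and at most $2n-4$ in row $2$; inspecting the $A_j$- and $B_k$-edges at this new right boundary shows it coincides with the right boundary of $H_{n-3}^2$, so the residual graph is isomorphic to $H_{n-3}^2$. A vertex count confirms the bookkeeping: $H_n^2$ has $6n+4$ vertices, deleting $N\lsq v_{2n}^2\rsq$ leaves $6n$, the six folds remove six more, and the four detached copies of $P_2$ account for $8$, leaving $6n-14 = 6(n-3)+4 = |V(H_{n-3}^2)|$. Hence, by the fold Lemma (\Cref{foldlemma}) together with the disjoint-union formula for independence complexes,
$$\ind \lno H_n^2 \setminus N_{H_n^2}\lsq v_{2n}^2 \rsq \rno \simeq \ind \lno H_{n-3}^2 \sqcup P_2 \sqcup P_2 \sqcup P_2 \sqcup P_2 \rno \simeq \Sigma^4 \lno \ind \lno H_{n-3}^2 \rno \rno.$$

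The step I expect to be the main obstacle is checking that the folds interact correctly and that the residual graph is genuinely $H_{n-3}^2$ rather than a boundary-distorted variant: each fold changes which vertices become pendant, so the sequence must be executed in the stated order, and one must confirm that the two cross-edge families $B_1$ (odd row-$1$ to even row-$2$) and $B_2$ (odd row-$2$ to odd row-$3$) reappear at the new boundary with the correct parity. The hypothesis $n\geq 4$ is exactly what guarantees that $H_{n-3}^2 = H_1^2$ is well-defined and that all folded indices ($2n-4$ and above) remain in range, so no degenerate overlap occurs among the peeled-off paths.
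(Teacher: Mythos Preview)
Your proof is correct and follows essentially the same approach as the paper: the same six vertices are folded (using the same ``anchors'' $v_{2n+2}^2$, $v_{2n+1}^3$, $v_{2n-2}^3$, $v_{2n-2}^1$), yielding the identical decomposition $H_{n-3}^2 \sqcup P_2^{\sqcup 4}$. The only cosmetic difference is the ordering: the paper folds $v_{2n-4}^1$ and $v_{2n-2}^2$ using $v_{2n-2}^1$ directly (this is already valid since $N(v_{2n-2}^1)=\{v_{2n-3}^1\}\subseteq N(v_{2n-2}^2)$ once $v_{2n-1}^1$ and $v_{2n-1}^2$ are gone), whereas you wait until $v_{2n-3}^2$ has been removed; both sequences are legitimate.
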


\begin{figure}[H]
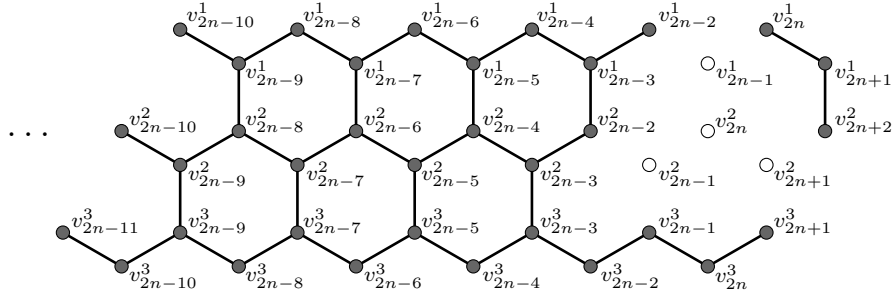

\centering

    \caption{The graph $H_n^2 \setminus  N_{H_n^2} \lsq v_{2n}^2 \rsq$.}
    \label{fig: graph of Hn2 - N[v(2n)2] in ind complex}
\end{figure}

\begin{proof}
    Observe that in $H_n^2 \setminus  N_{H_n^2} \lsq v_{2n}^2 \rsq$ (see \Cref{fig: graph of Hn2 - N[v(2n)2] in ind complex}), we fold $v_{2n}^1$ using $v_{2n+2}^2$. Similarly, we fold $v_{2n-4}^1$ and $v_{2n-2}^2$ using $v_{2n-2}^1$. On the other hand, we fold $v_{2n-1}^3$ using $v_{2n+1}^3$. At last, once $v_{2n-1}^3$ has been folded, we fold $v_{2n-3}^2$ and $v_{2n-4}^3$ using $v_{2n-2}^3$ (see \Cref{fig: foldings in Hn2 - N[v(2n)2]}).

    \begin{figure}[H]
        \centering

        \caption{Foldings in the graph $H_n^2 \setminus  N_{H_n^2} \lsq v_{2n}^2 \rsq$.}
        \label{fig: foldings in Hn2 - N[v(2n)2]}
    \end{figure}

    Therefore, using the fold Lemma (\Cref{foldlemma}), we have 
    \begin{align*}
        \ind \lno H_n^2 \setminus N_{H_n^2} \lsq v_{2n}^2 \rsq \rno &\simeq \ind \lno H_{n-3}^2 \sqcup P_2 \sqcup P_2\sqcup P_2\sqcup P_2\rno\\
        &\simeq \Sigma^4 \lno \ind \lno H_{n-3}^2 \rno \rno.
    \end{align*}

    This completes the proof \Cref{lemma: link of b(2t) in Hn2}.
\end{proof}

\begin{lemma}\label{lemma: deletion of v(2n)2 in Hn2}
    For $n\geq 4$, $\ind \lno H_n^2 \setminus \lcu v_{2n}^2 \rcu \rno \simeq \Sigma^6 \lno Y_{n-3} \rno \vee \Sigma^4 \lno Y_{n-2}\rno$.
\end{lemma}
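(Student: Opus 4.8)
The plan is to follow the template of \Cref{lemma: deletion of v(2t-1)2 in Yn} almost verbatim. Write $\widetilde{H}_n := H_n^2 \setminus \lcu v_{2n}^2 \rcu$ for the deletion graph, and resolve $\ind\lno \widetilde{H}_n \rno$ by one application of \Cref{lemma: finding homotopy using link and deletion ind complex} at a boundary vertex $w$ of the bottom row. Since the primary vertex $v_{2n}^2$ is even-indexed and hence has no $B_2$-partner below it, the natural choice is the right-most odd-indexed bottom vertex $w = v_{2n+1}^3$ (the analogue of $v_{2n-1}^3$ in \Cref{lemma: deletion of v(2t-1)2 in Yn}). Because passing to the link removes the closed neighbourhood $N_{\widetilde{H}_n}\lsq w \rsq = \lcu v_{2n+1}^3, v_{2n}^3, v_{2n+1}^2 \rcu$ and thereby strips one more hexagon from the boundary than the deletion does, I expect the link to reduce to $Y_{n-3}$ and the deletion to $Y_{n-2}$.

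First I would compute the deletion $\ind\lno \widetilde{H}_n \setminus \lcu v_{2n+1}^3 \rcu \rno$. Removing $v_{2n+1}^3$ turns $v_{2n+1}^2$ into a leaf, after which repeated application of the fold lemma (\Cref{foldlemma}) along the right boundary --- folding the exposed degree-one vertices and splitting off the resulting $P_2$-components exactly as in the figures of \Cref{lemma: deletion of v(2t-1)2 in Yn} --- should leave a copy of $Y_{n-2}$ together with four disjoint copies of $P_2$. The disjoint-union lemma then gives $\ind\lno \widetilde{H}_n \setminus \lcu v_{2n+1}^3 \rcu \rno \simeq \Sigma^4 \lno \ind \lno Y_{n-2} \rno \rno$. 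An entirely parallel folding sequence applied to $\widetilde{H}_n \setminus N_{\widetilde{H}_n}\lsq v_{2n+1}^3 \rsq$ (where now $v_{2n}^2, v_{2n}^3, v_{2n+1}^2, v_{2n+1}^3$ are all absent) should terminate in a copy of $Y_{n-3}$ and five copies of $P_2$, giving $\ind\lno \widetilde{H}_n \setminus N_{\widetilde{H}_n}\lsq v_{2n+1}^3 \rsq \rno \simeq \Sigma^5 \lno \ind \lno Y_{n-3} \rno \rno$.

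It then remains to check that the inclusion $\ind\lno \widetilde{H}_n \setminus N_{\widetilde{H}_n}\lsq v_{2n+1}^3 \rsq \rno \hookrightarrow \ind\lno \widetilde{H}_n \setminus \lcu v_{2n+1}^3 \rcu \rno$ is null-homotopic. As in the proof of \Cref{thm: homotopy type of Yn in Hn2}, I would factor this inclusion through the intermediate induced subgraph obtained by deleting $v_{2n+1}^3$ together with a single neighbour (for instance $v_{2n}^3$); folding this graph should leave one isolated vertex, so by the disjoint-union lemma its independence complex is a cone, hence contractible, and \Cref{prop: null-homotopy through factor} yields the null-homotopy. With this in place, \Cref{lemma: finding homotopy using link and deletion ind complex} gives
\begin{align*}
    \ind\lno \widetilde{H}_n \rno &\simeq \ind\lno \widetilde{H}_n \setminus \lcu v_{2n+1}^3 \rcu \rno \vee \Sigma\lno \ind\lno \widetilde{H}_n \setminus N_{\widetilde{H}_n}\lsq v_{2n+1}^3 \rsq \rno \rno\\
    &\simeq \Sigma^4 \lno \ind \lno Y_{n-2} \rno \rno \vee \Sigma^6 \lno \ind \lno Y_{n-3} \rno \rno,
\end{align*}
which is the assertion of \Cref{lemma: deletion of v(2n)2 in Hn2}.

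The main obstacle is the combinatorial bookkeeping: one must verify that each fold genuinely satisfies $N(v) \subseteq N(w)$ against the explicit adjacencies $A_1, A_2, A_3, B_1, B_2$, that the two folding sequences terminate precisely in $Y_{n-2}$ and $Y_{n-3}$ with exactly four and five $P_2$-summands, and that the intermediate graph isolates a vertex. The order in which the folds are performed is essential, and the smallest instances --- especially $n = 4$, where $Y_1$ and $Y_2$ appear and the boundary region nearly exhausts the graph --- should be verified directly to ensure there is enough room for the folds, just as the theorems of this section dispatch their smallest cases by hand.
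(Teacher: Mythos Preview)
Your overall template matches the paper's, but your choice of auxiliary vertex is the wrong one, and the claimed fold outcomes are incorrect. The paper works with $v_{2n-1}^3$, not $v_{2n+1}^3$, and this is not a cosmetic difference.

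Consider the deletion $\widetilde{H}_n \setminus \lcu v_{2n+1}^3 \rcu$. Here $v_{2n+1}^2$ and $v_{2n}^3$ are leaves; folding $v_{2n+1}^1$ (using $v_{2n+1}^2$), then $v_{2n-2}^1$ (using the new leaf $v_{2n}^1$), and $v_{2n-2}^3$, $v_{2n-1}^2$ (using $v_{2n}^3$) peels off exactly three copies of $P_2$ and leaves the induced subgraph on $\lcu v_i^1 : i \le 2n-3 \rcu \cup \lcu v_i^2 : i \le 2n-2 \rcu \cup \lcu v_i^3 : i \le 2n-3 \rcu$, which is $H_{n-2}^2$, not $Y_{n-2}$. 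Thus $\ind\lno \widetilde{H}_n \setminus \lcu v_{2n+1}^3 \rcu \rno \simeq \Sigma^3 \ind\lno H_{n-2}^2 \rno$. For $n=5$ this is $\bbS^8 \vee \bbS^9$, whereas your claimed $\Sigma^4 \ind\lno Y_3 \rno$ is $\bbS^9 \vee \bbS^9$. Since the target $\ind\lno \widetilde{H}_5 \rno \simeq \bigvee_5 \bbS^9$ contains no $\bbS^8$ summand, the link--deletion sequence at $v_{2n+1}^3$ cannot split: the inclusion is \emph{not} null-homotopic. Your proposed intermediate graph confirms this: deleting $\lcu v_{2n+1}^3, v_{2n}^3 \rcu$ from $\widetilde{H}_n$ and folding as above leaves $Y_{n-1} \sqcup P_2 \sqcup P_2$, whose independence complex $\Sigma^2 \ind\lno Y_{n-1} \rno$ is not contractible.

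The paper's vertex $v_{2n-1}^3$ behaves differently because its closed neighbourhood contains $v_{2n-1}^2$, which severs the connection between the upper hexagon chain and the bottom row. There the link folds to $Y_{n-2} \sqcup 3P_2$ (so $\Sigma^3 \ind\lno Y_{n-2}\rno$), the deletion folds to $Y_{n-3} \sqcup 6P_2$ (so $\Sigma^6 \ind\lno Y_{n-3}\rno$) --- note the roles are reversed from your expectation --- and the intermediate graph $T_n^{(4)} = T_n^{(1)} \setminus \lcu v_{2n-1}^3, v_{2n}^3 \rcu$ genuinely isolates $v_{2n+1}^3$, giving the required contractibility.
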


\begin{figure}[H]
\centering

    \caption{The graph $H_n^2 \setminus \lcu v_{2n}^2 \rcu$.}
    \label{fig: graph of Hn2 - v(2n)2 in ind complex}
\end{figure}

\begin{proof}
    Let $T_n^{(1)} = H_n^2 \setminus \lcu v_{2n}^2 \rcu$ (see \Cref{fig: graph of Hn2 - v(2n)2 in ind complex}). We first discuss the homotopy type of

    \begin{enumerate}
        \item $\ind \lno T_n^{(1)} \setminus  N_{T_n^{(1)}} \lsq v_{2n-1}^3 \rsq \rno$; and,
        \item $\ind \lno T_n^{(1)} \setminus \lcu v_{2n-1}^3 \rcu \rno$.
    \end{enumerate}

    Later, we show that the inclusion map $$\ind \lno T_n^{(1)} \setminus  N_{T_n^{(1)}} \lsq v_{2n-1}^3 \rsq \rno \hookrightarrow \ind \lno T_n^{(1)} \setminus \lcu v_{2n-1}^3 \rcu \rno,$$ is null-homotopic.

    \caseheading{Homotopy type of $\bm{{\ind} \lno T_{n}^{(1)} \setminus  N_{T_{n}^{(1)}} \lsq v_{2n-1}^3 \rsq \rno}$:}

    \begin{figure}[H]
    \centering

        \caption{Foldings in the graph $T_{n}^{(2)}$.}
        \label{c}
    \end{figure}

    Therefore, using the fold Lemma (\Cref{foldlemma}), we have 
    \begin{align*}
        \ind \lno T_n^{(2)} \rno &\simeq \ind \lno Y_{n-2} \sqcup P_2 \sqcup P_2 \sqcup P_2\rno\\
        &\simeq \Sigma^3 \lno \ind \lno Y_{n-2} \rno \rno.
    \end{align*}
    Hence, 
    \begin{align}\label{eqn: link of c(2t-1) in Tn(1)}
        \ind \lno T_n^{(1)} \setminus  N_{T_n^{(1)}} \lsq v_{2n-1}^3 \rsq \rno \simeq \Sigma^3 \lno \ind \lno Y_{n-2}^2 \rno \rno.
    \end{align}

    \caseheading{Homotopy type of $\bm{{\ind} \lno T_{n}^{(1)} \setminus \lcu v_{2n-1}^3 \rcu \rno}$:}

    \begin{figure}[H]
    \centering

        \caption{The graph $T_n^{(3)} = T_n^{(1)} \setminus \lcu v_{2n-1}^3 \rcu$.}
        \label{fig: graph of Tn3 in ind complex}
    \end{figure}

    Let $T_n^{(3)} = T_n^{(1)} \setminus \lcu v_{2n-1}^3 \rcu$ (see \Cref{fig: graph of Tn3 in ind complex}). Observe that in $T_n^{(3)}$, we fold $v_{2n-3}^2$ and $v_{2n-4}^3$ using $v_{2n-2}^3$. Once $v_{2n-3}^2$ has been folded, we fold $v_{2n-3}^1$ using $v_{2n-1}^2$. Following the folding of $v_{2n-3}^1$, we fold $v_{2n-6}^1$ and $v_{2n-4}^2$ using $v_{2n-4}^1$. Also, after the folding of $v_{2n-3}^1$, we fold $v_{2n}^1$ using $v_{2n-2}^1$. At last, once $v_{2n}^1$ has been folded, we fold $v_{2n+1}^2$ using $v_{2n+1}^1$ (see \Cref{fig: foldings in Tn3}).

    \begin{figure}[H]
    \centering

        \caption{Foldings in the graph $T_n^{(3)} = T_n^{(1)} \setminus \lcu v_{2n-1}^3 \rcu$.}
        \label{fig: foldings in Tn3}
    \end{figure}

    Therefore, using the fold Lemma (\Cref{foldlemma}), we have
    \begin{align*}
        \ind \lno T_n^{(3)} \rno &\simeq \ind \lno Y_{n-3} \sqcup P_2 \sqcup P_2 \sqcup P_2\sqcup P_2 \sqcup P_2\sqcup P_2\rno\\
        &\simeq \Sigma^6 \lno \ind \lno Y_{n-3} \rno \rno.
    \end{align*}

     Hence,
     \begin{align}\label{eqn: deletion of c(2t-1) in Tn(1)}
         \ind \lno T_n^{(1)} \setminus \lcu v_{2n-1}^3 \rcu \rno \simeq \Sigma^6 \lno \ind \lno Y_{n-3} \rno \rno.
     \end{align}

    \caseheading{The inclusion map $\bm{{\ind} \lno T_{n}^{(1)} \setminus  N_{T_{n}^{(1)}} \lsq v_{2n-1}^3 \rsq \rno} \hookrightarrow \bm{{\ind} \lno T_{n}^{(1)} \setminus \lcu v_{2n-1}^3 \rcu \rno}$  is null-homotopic:}

    To prove this, we consider the induced subgraph $T_n^{(4)} = T_n^{(1)} \setminus \lcu v_{2n-1}^3,\ v_{2n}^3 \rcu$, that is, $T_n^{(4)}$ is precisely $T_n^{(2)} \cup \lcu v_{2n-1}^2, v_{2n-2}^3 \rcu$ (see \Cref{fig: graph of Tn4 in ind complex}).

    \begin{figure}[H]
    \centering

        \caption{The graph $T_n^{(4)} = T_n^{(1)} \setminus \lcu v_{2n-1}^3,\ v_{2n}^3 \rcu$.}
        \label{fig: graph of Tn4 in ind complex}
    \end{figure}
    
    We then have the following inclusion map, 
    $$\ind \lno T_n^{(1)} \setminus  N_{T_n^{(1)}} \lsq v_{2n-1}^3 \rsq \rno \hookrightarrow \ind \lno T_n^{(4)} \rno \hookrightarrow \ind \lno T_n^{(1)} \setminus \lcu v_{2n-1}^3 \rcu \rno.$$

    The same procedure used above to determine the homotopy of $\ind \lno T_n^{(1)} \setminus \lcu v_{2n-3}^1 \rcu \rno$ will work here too. The only exception is that here, this procedure leaves $v_{2n+1}^3$ as an isolated vertex (see \Cref{fig: foldings in Tn4}).
    
    \begin{figure}[H]
    \centering

        \caption{Foldings in the graph $T_n^{(4)}$.}
        \label{fig: foldings in Tn4}
    \end{figure}

    Thus, using the fold Lemma (\Cref{foldlemma}), we have
    \begin{align*}
        \ind \lno T_n^{(4)} \rno &\simeq \ind \lno Y_{n-3} \sqcup P_2 \sqcup P_2 \sqcup P_2 \sqcup P_2 \sqcup P_2 \sqcup \lcu v_{2n+1}^3 \rcu \rno\\
        &\simeq \lno \Sigma^5 \lno \ind \lno Y_{n-3} \rno \rno \rno * \lcu v_{2n+1}^3 \rcu\\
        &\simeq pt.
    \end{align*}

    In other words, $\ind \lno T_n^{(4)} \rno$ is contractible. Together with \Cref{prop: null-homotopy through factor}, this implies that the inclusion map,,
    $$\ind \lno T_n^{(1)} \setminus  N_{T_n^{(1)}} \lsq v_{2n-1}^3 \rsq \rno \hookrightarrow \ind \lno T_n^{(4)} \rno,$$
    is null-homotopic. Which in turn implies that, $$\ind \lno T_n^{(1)} \setminus  N_{T_n^{(1)}} \lsq v_{2n-1}^3 \rsq \rno \hookrightarrow \ind \lno T_n^{(1)} \setminus \lcu v_{2n-1}^3\rcu \rno,$$ is null-homotopic. Consequently, using \Cref{lemma: finding homotopy using link and deletion ind complex}, \Cref{eqn: link of c(2t-1) in Tn(1)} and \Cref{eqn: deletion of c(2t-1) in Tn(1)}, we have
    \begin{align*}
        \ind\lno T_n^{(1)} \rno &\simeq \ind \lno T_n^{{(1)}} \setminus \lcu v_{2n-1}^3 \rcu \rno \vee \Sigma \lno \ind \lno T_n^{(1)} \setminus  N_{T_n^{(1)}}\lsq v_{2n-1}^3 \rsq \rno \rno\\
        &\simeq \Sigma^6 \lno \ind \lno Y_{n-3} \rno \rno \vee \Sigma\lno \Sigma^3 \lno \ind \lno Y_{n-2} \rno \rno \rno\\
        &\simeq \Sigma^6 \lno \ind \lno Y_{n-3} \rno \rno \vee \Sigma^4 \lno \ind \lno Y_{n-2} \rno \rno.
    \end{align*}

    Hence, $$\ind \lno H_n^2 \setminus \lcu v_{2n}^2 \rcu \rno \simeq \Sigma^6 \lno \ind \lno Y_{n-3} \rno \rno \vee \Sigma^4 \lno \ind \lno Y_{n-2} \rno \rno.$$

    This completes the proof of \Cref{lemma: deletion of v(2n)2 in Hn2}.
\end{proof}

We are now ready to determine the homotopy type of the independence complex of double hexagonal line tiling, that is, $\ind \lno H_n^2 \rno$. Recall that here $H_n^2 = H_{1 \times 2 \times n}$.

\begin{theorem}\label{thm: homotopy type of Hn2}
    For $n\geq 1$, the independence complex of the double hexagonal line tiling, that is, $\ind \lno H_n^2 \rno$, has the following homotopy type, 
    $$\ind \lno H_n^2 \rno \simeq \begin{cases}
            \mathbb{S}^2 \vee \mathbb{S}^2, & \text{if } n=1;\\
            \mathbb{S}^4 \vee \mathbb{S}^4 \vee \mathbb{S}^4, & \text{if } n=2;\\
            \mathbb{S}^5 \vee \mathbb{S}^6, & \text{if } n=3;\\
            \Sigma^6 \lno \ind \lno Y_{n-3} \rno \rno \vee \Sigma^4 \lno \ind \lno Y_{n-2} \rno \rno \vee \Sigma^5 \lno \ind \lno H_{n-3}^2 \rno \rno, & \text{if } n \geq 4.
        \end{cases}$$
\end{theorem}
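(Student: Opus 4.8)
The plan is to determine $\ind\lno H_n^2 \rno$ for $n \geq 4$ by applying \Cref{lemma: finding homotopy using link and deletion ind complex} to the vertex $v_{2n}^2$, and to treat $n = 1, 2, 3$ separately by direct computation, since the recursive terms $\ind\lno Y_{n-3} \rno$, $\ind\lno Y_{n-2} \rno$, and $\ind\lno H_{n-3}^2 \rno$ only become meaningful once $n \geq 4$. For the base cases $n = 1, 2, 3$, I would run the same link-and-deletion strategy at a suitable boundary vertex and then collapse the resulting intermediate subgraphs with the fold lemma (\Cref{foldlemma}); here the null-homotopy of the relevant inclusion is immediate via the dimension-comparison method, because the link turns out to be a wedge of spheres of strictly smaller dimension than the deletion.

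For the inductive range $n \geq 4$, the two inputs are already in hand. \Cref{lemma: link of b(2t) in Hn2} gives $\ind\lno H_n^2 \setminus N_{H_n^2}\lsq v_{2n}^2 \rsq \rno \simeq \Sigma^4\lno \ind\lno H_{n-3}^2 \rno \rno$, and \Cref{lemma: deletion of v(2n)2 in Hn2} gives $\ind\lno H_n^2 \setminus \lcu v_{2n}^2 \rcu \rno \simeq \Sigma^6\lno \ind\lno Y_{n-3} \rno \rno \vee \Sigma^4\lno \ind\lno Y_{n-2} \rno \rno$. Granting that the inclusion $\ind\lno H_n^2 \setminus N_{H_n^2}\lsq v_{2n}^2 \rsq \rno \hookrightarrow \ind\lno H_n^2 \setminus \lcu v_{2n}^2 \rcu \rno$ is null-homotopic, \Cref{lemma: finding homotopy using link and deletion ind complex} then yields
\begin{align*}
    \ind\lno H_n^2 \rno &\simeq \ind\lno H_n^2 \setminus \lcu v_{2n}^2 \rcu \rno \vee \Sigma\lno \ind\lno H_n^2 \setminus N_{H_n^2}\lsq v_{2n}^2 \rsq \rno \rno\\
    &\simeq \Sigma^6\lno \ind\lno Y_{n-3} \rno \rno \vee \Sigma^4\lno \ind\lno Y_{n-2} \rno \rno \vee \Sigma^5\lno \ind\lno H_{n-3}^2 \rno \rno,
\end{align*}
which is exactly the asserted formula for $n \geq 4$.

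The hard part—and the only genuinely nontrivial step—will be establishing the null-homotopy of that inclusion. The dimension-comparison method is not guaranteed to apply, since the suspended link $\Sigma^5\lno \ind\lno H_{n-3}^2 \rno \rno$ and the summand $\Sigma^4\lno \ind\lno Y_{n-2} \rno \rno$ of the deletion can carry spheres of overlapping dimension. I would therefore use the factoring method of \Cref{prop: null-homotopy through factor}: construct an induced subgraph $W$ with
$$H_n^2 \setminus N_{H_n^2}\lsq v_{2n}^2 \rsq \subseteq W \subseteq H_n^2 \setminus \lcu v_{2n}^2 \rcu,$$
equivalently $W = H_n^2 \setminus S$ for some $\lcu v_{2n}^2 \rcu \subseteq S \subseteq N_{H_n^2}\lsq v_{2n}^2 \rsq = \lcu v_{2n}^2, v_{2n-1}^1, v_{2n-1}^2, v_{2n+1}^2 \rcu$, chosen so that $\ind\lno W \rno$ is contractible. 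The desired inclusion then factors as $\ind\lno H_n^2 \setminus N_{H_n^2}\lsq v_{2n}^2 \rsq \rno \hookrightarrow \ind\lno W \rno \hookrightarrow \ind\lno H_n^2 \setminus \lcu v_{2n}^2 \rcu \rno$, so it is null-homotopic by \Cref{prop: null-homotopy through factor}.

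The candidate $W$ should be selected so that a cascade of folds, exactly in the style of the contractibility arguments already used for $Y_n^{(4)}$, $Y_n^{(5)}$, and $T_n^{(4)}$, collapses $W$ onto a disjoint union of a smaller copy of an $H^2$-type or $Y$-type graph together with several copies of $P_2$ and one leftover \emph{isolated} vertex; that isolated vertex makes $\ind\lno W \rno$ a cone, hence contractible. The remaining work is the bookkeeping of pinning down the precise $S$ and verifying the fold sequence—in particular, confirming that exactly one vertex is left isolated—which is routine but must be checked carefully against the figures, just as in the preceding lemmas.
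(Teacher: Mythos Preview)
Your proposal is correct and matches the paper's approach essentially line for line: the paper also applies \Cref{lemma: finding homotopy using link and deletion ind complex} at $v_{2n}^2$, invokes \Cref{lemma: link of b(2t) in Hn2} and \Cref{lemma: deletion of v(2n)2 in Hn2}, and establishes the null-homotopy by factoring through a contractible intermediate $\ind\lno T_n^{(5)}\rno$ with $T_n^{(5)} = H_n^2 \setminus S$ for $S = \lcu v_{2n-1}^1, v_{2n}^2, v_{2n+1}^2 \rcu \subset N_{H_n^2}\lsq v_{2n}^2\rsq$, exactly as you describe. The fold cascade on $T_n^{(5)}$ indeed collapses to $H_{n-3}^2 \sqcup P_2 \sqcup P_2 \sqcup P_2$ together with the isolated vertex $v_{2n-1}^2$, and the base cases are handled just as you outline (with $n=1$ reduced to $H_{1\times 1\times 2}$ via \Cref{thm: homotopy type of H11n}).
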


\begin{proof}
    For the case when $n=1$, the graph $H_2^1$ is isomorphic to the graph $H_{1 \times 1 \times 2}$ (see \Cref{fig: graph of H1n in ind complex}), and hence, the result follows from \Cref{thm: homotopy type of H11n}. For the remaining cases, the argument given for the initial cases in the proof of \Cref{thm: homotopy type of Yn in Hn2} also applies here.

    For $n \geq 4$, observe that it suffices to prove that the inclusion map $$\ind \lno H_n^2 \setminus  N_{H_n^2} \lsq v_{2n}^2 \rsq \rno \hookrightarrow \ind \lno H_n^2 \setminus \lcu v_{2n}^2 \rcu \rno$$ is null-homotopic, since the result then follows from \Cref{lemma: finding homotopy using link and deletion ind complex}, \Cref{lemma: link of b(2t) in Hn2}, and \Cref{lemma: deletion of v(2n)2 in Hn2}.

    For this, we consider the induced subgraph $T_n^{(5)} = H_n^2 \setminus \lcu v_{2n-1}^2, v_{2n}^2, v_{2n+1}^2 \rcu$ (see \Cref{fig: graph of Tn5 in ind complex}).
    
    \begin{figure}[H]
    \centering

        \caption{The graph $T_n^{(5)} = H_n^2 \setminus \lcu v_{2n-1}^2, v_{2n}^2, v_{2n+1}^2 \rcu$.}
        \label{fig: graph of Tn5 in ind complex}
    \end{figure}
    
    We then have the following inclusion map, 
    $$\ind \lno H_n^2 \setminus  N_{H_n^2} \lsq v_{2n}^2 \rsq \rno \hookrightarrow \ind \lno T_n^{(5)} \rno \hookrightarrow \ind \lno H_n^2\setminus \lcu v_{2n}^2 \rcu \rno.$$

    The same procedure used to determine the homotopy type of $\ind \lno H_n^2 \setminus  N_{H_n^2} \lsq v_{2n}^2 \rsq \rno$ in the proof of \Cref{lemma: link of b(2t) in Hn2} applies here as well. The only exception is that here, this procedure leaves $v_{2n-1}^2$ as an isolated vertex (see \Cref{fig: foldings in Tn5}).
    
    \begin{figure}[H]
    \centering

        \caption{Foldings in the graph $H_n^2 \setminus  N_{H_n^2} \lsq v_{2n}^2 \rsq$.}
        \label{fig: foldings in Tn5}
    \end{figure}
    
    Thus, using the fold Lemma (\Cref{foldlemma}), we have
    \begin{align*}
        \ind \lno T_n^{(5)} \rno &\simeq \ind \lno H_{n-3}^2 \sqcup P_2 \sqcup P_2 \ \sqcup P_2 \sqcup \lcu v_{2n-1}^2 \rcu \rno\\
        &\simeq \lno \Sigma^4 \lno \ind \lno H_{n-3}^2 \rno \rno \rno * \lcu v_{2n-1}^2 \rcu\\
        &\simeq pt.
    \end{align*}

    In other words, $\ind \lno T_n^{(5)} \rno$ is contractible. Together with \Cref{prop: null-homotopy through factor}, this implies that the inclusion map,
    $$\ind \lno H_n^2 \setminus  N_{H_n^2} \lsq v_{2n}^2 \rsq \rno \hookrightarrow \ind \lno Y_n^{(5)} \rno,$$
    is null-homotopic. Which in turn implies that, $$\ind \lno H_n^2 \setminus  N_{H_n^2} \lsq v_{2n}^2 \rsq \rno \hookrightarrow \ind \lno H_n^2 \setminus \lcu v_{2n}^2\rcu \rno,$$
    is null-homotopic. This completes the proof \Cref{thm: homotopy type of Hn2}.
\end{proof}
\section{Independence Complex of $H_{1\times 3\times n}$}

In this section, we determine the homotopy type of the independence complex of the $\lno {1\times 3 \times n} \rno$-hexagonal grid graph (see \Cref{fig: graph of H(1_3_n) in ind complex}), $H_{1\times 3 \times n}$ (or, the \textit{triple hexagonal line tiling}), that is, $\ind \lno H_{1 \times 3 \times n}\rno$, for $n \geq 1$. For simplicity, we write $H_n^3 = H_{1 \times 3 \times n}$.

\begin{figure}[H]
\centering

        \caption{The graph $Z_n^{(3)}$.}
        \label{fig: graph of Zn3 in ind complex}
    \end{figure}

    Note that the description of $Z_n^{(3)}$ given above handles the case $n \geq 3$. For $n = 1,2$, we define $Z_n^{(3)}$ in an analogous manner as follows.
    \begin{align*}
        Z_1^{(3)} &= H_{1}^3 \setminus \lcu v_{1}^1, v_{2}^1, v_{3}^1, v_{4}^2 \rcu;\\
        Z_2^{(3)} &= H_{2}^3 \setminus \lcu v_{1}^1, v_{2}^1, v_{3}^1, v_{4}^1, v_{5}^1, v_{2}^2, v_{6}^2 \rcu.
    \end{align*}
    
\end{enumerate}

We begin with a brief outline of the steps involved in determining the homotopy type of $\ind \lno H_n^3 \rno$, which will guide the computations that follow.

\caseheading{Outline of the procedure:}

\begin{enumerate}
    \item We first analyze the link and deletion of the vertex $v_{2n}^2$ in $\ind \lno H_n^3 \rno$, namely $\ind \lno H_n^3 \setminus N \lno v_{2n}^2 \rno \rno$ and $\ind \lno H_n^3 \setminus \lcu v_{2n}^2 \rcu \rno$. Determining the homotopy type of $\ind \lno H_n^3 \setminus N \lno v_{2n}^2 \rno \rno$ (see \Cref{lemma: link of v(2n)2 in Hn3}) 
    requires the homotopy type of $\ind \lno Z_n^{(1)} \rno$. Likewise, computing the homotopy type of $\ind \lno H_n^3 \setminus \lcu v_{2n}^2 \rcu \rno$ (see \Cref{lemma: deletion of v(2n)2 in Hn3}) requires the homotopy types of $\ind \lno Z_n^{(2)} \rno$ and $\ind \lno Z_n^{(3)} \rno$.

    \item The homotopy type of $\ind \lno Z_n^{(2)} \rno$ admits a closed formula (see \Cref{thm: Homotopy type of ind complex of Zn2}). This formula is obtained by repeatedly applying the fold lemma together with induction on $n$.

    \item The homotopy types of $\ind \lno Z_n^{(1)} \rno$ and $\ind \lno Z_n^{(3)} \rno$ are computed using recursive relations (see \Cref{thm: Homotopy type of ind complex of Zn1} and \Cref{thm: Homotopy type of ind complex of Zn3}). These formulas express $\ind \lno Z_n^{(1)} \rno$ and $\ind \lno Z_n^{(3)} \rno$ in terms of the corresponding complexes for smaller values of $n$.
\end{enumerate}

\subsection{Independence complex of $Z_n^{(1)}$}

Here, we will determine the homotopy type of the independence complex of $Z_n^{(1)}$, denoted $\ind \lno Z_n^{(1)} \rno$. We plan to do this by looking at the link and deletion of the vertex $v_{2n-1}^2$ in $\ind \lno Z_n^{(1)} \rno$, that is, $\ind \lno Z_n^{(1)} \setminus N \lsq v_{2n-1}^2 \rsq \rno$ and $\ind \lno Z_n^{(1)} \setminus \lcu v_{2n-1}^2 \rcu \rno$, respectively, and then analyzing the inclusion maps between these two complexes.

\begin{lemma}\label{lemma: link of v(2n-1)2 in Zn1}
    For $n\geq 3$, $\ind \lno Z_n^{(1)} \setminus N_{Z_n^{(1)}} \lsq v_{2n-1}^2 \rsq \rno \simeq \Sigma^6 \lno \ind \lno Z_{n-3}^{(1)} \rno \rno$.
\end{lemma}

\begin{figure}[H]
\centering

    \caption{The graph $Z_n^{(1)} \setminus N_{Z_n^{(1)}}  \lsq v_{2n-1}^2 \rsq$.}
    \label{fig: graph of Zn1 - N[v(2n-1)2] in ind complex}
\end{figure}

\begin{proof}
    In $Z_n^{(1)} \setminus N_{Z_n^{(1)}}  \lsq v_{2n-1}^2 \rsq$ (see \Cref{fig: graph of Zn1 - N[v(2n-1)2] in ind complex}), we fold $v_{2n-3}^1$ using $v_{2n-1}^1$. Following the folding of $v_{2n-3}^1$, we fold $v_{2n-6}^1$ and $v_{2n-4}^2$ using $v_{2n-4}^1$. Once $v_{2n-4}^2$ is folded, we fold $v_{2n-3}^3$ and $v_{2n-1}^3$ using $v_{2n-3}^2$. On the other hand, we fold $v_{2n+1}^3$ using $v_{2n+1}^2$. Following the folding of $v_{2n+1}^3$, we fold $v_{2n-1}^4$ using $v_{2n+1}^4$. At last, after the folding of $v_{2n-1}^4$, we fold $v_{2n-4}^4$ using $v_{2n-2}^4$ (see \Cref{fig: foldings in Zn1 - N[v(2n-1)2] in ind complex}).

    \begin{figure}[H]
    \centering

        \caption{Foldings in the graph $Z_n^{(1)} \setminus N_{Z_n^{(1)}}  \lsq v_{2n-1}^2 \rsq$.}
        \label{fig: foldings in Zn1 - N[v(2n-1)2] in ind complex}
    \end{figure}

    Therefore, using the fold Lemma (\Cref{foldlemma}), we have 
    \begin{align*}
        \ind \lno Z_n^{(1)} \setminus N_{Z_n^{(1)}}  \lsq v_{2n-1}^2 \rsq \rno &\simeq \ind \lno Z_{n-3}^{(1)} \sqcup P_2 \sqcup P_2 \sqcup P_2 \sqcup P_2 \sqcup P_2 \sqcup P_2\rno\\
        &\simeq \Sigma^6 \lno \ind \lno Z_{n-3}^{(1)} \rno \rno.
    \end{align*}

    This completes the proof of \Cref{lemma: link of v(2n-1)2 in Zn1}.
\end{proof}

\begin{lemma}\label{lemma: deletion of v(2n-1)2 in Zn1}
    For $n\geq 2$, $$\ind \lno Z_n^{(1)} \setminus \lcu v_{2n-1}^2 \rcu \rno \simeq \Sigma^7 \lno \ind \lno Z_{n-3}^{(1)} \rno \rno \vee \Sigma^4 \lno \ind \lno Z_{n-2}^{(1)} \rno \rno.$$
\end{lemma}

\begin{figure}[H]
\centering

    \caption{The graph $W_n^{(1)} = Z_n^{(1)} \setminus \lcu v_{2n-1}^2 \rcu$.}
    \label{fig: graph of Zn1 - v(2n-1)2 in ind complex}
\end{figure}

\begin{proof}
    Let $W_n^{(1)} = Z_n^{(1)} \setminus \lcu v_{2n-1}^2 \rcu$ (see \Cref{fig: graph of Zn1 - v(2n-1)2 in ind complex}). We first discuss the homotopy type of

    \begin{enumerate}
        \item $\ind \lno W_n^{(1)} \setminus N_{W_n^{(1)}} \lsq v_{2n+1}^{(3)} \rsq \rno$; and,
        \item $\ind \lno W_n^{(1)} \setminus \lcu v_{2n+1}^{(3)} \rcu \rno$.
    \end{enumerate}

    Later, we show that the inclusion map $$\ind \lno W_n^{(1)} \setminus N_{W_n^{(1)}} \lsq v_{2n+1}^{(3)} \rsq \rno \hookrightarrow \ind \lno W_n^{(1)} \setminus \lcu v_{2n+1}^{(3)} \rcu \rno,$$ is null-homotopic.

    \caseheading{Homotopy type of $\bm{{\ind} \lno W_{n}^{(1)} \setminus N_{W_{n}^{(1)}} \lsq v_{2n+1}^{3} \rsq \rno}$:}

    \begin{figure}[H]
    \centering

        \caption{The graph $W_n^{(2)} = W_n^{(1)} \setminus N_{W_n^{(1)}} \lsq v_{2n+1}^{3} \rsq$.}
        \label{fig: graph of Wn2 in ind complex}
    \end{figure}
    
    Let $W_n^{(2)} = W_n^{(1)} \setminus N_{W_n^{(1)}} \lsq v_{2n+1}^{3} \rsq$ (see \Cref{fig: graph of Wn2 in ind complex}). Observe that in $W_n^{(2)}$, we fold $v_{2n-1}^3$ and $v_{2n-2}^4$ using $v_{2n}^4$. On the other hand, we fold $v_{2n-1}^1$ using $v_{2n+1}^{2}$. Finally, after the folding of $v_{2n-1}^1$, we fold $v_{2n-4}^1$ and $v_{2n-2}^2$ using $v_{2n-2}^1$ (see \Cref{fig: foldings in Wn2}).

    \begin{figure}[H]
    \centering

        \caption{Foldings in $W_n^{(2)}$.}
        \label{fig: foldings in Wn2}
    \end{figure}
    
    Consequently, using the fold Lemma (\Cref{foldlemma}), we have $$\ind \lno W_n^{(2)} \rno \simeq \ind \lno Z_{n-2}^{(1)} \sqcup P_2 \sqcup P_2 \sqcup P_2 \rno \simeq \Sigma^3 \lno \ind \lno Z_{n-2}^{(1)} \rno \rno.$$

    Hence, 
    \begin{align}\label{eqn: link of v(2n+1-3) in Wn(1)}
        \ind \lno W_n^{(1)} \setminus N_{W_n^{(1)}} \lsq v_{2n+1}^{3} \rsq \rno \simeq \Sigma^3 \lno \ind \lno Z_{n-2}^{(1)} \rno \rno.
    \end{align}

    \caseheading{Homotopy type of $\bm{{\ind} \lno W_{n}^{(1)} \setminus \lcu v_{2n+1}^{3} \rcu \rno}$:}

    \begin{figure}[H]
    \centering

        \caption{The graph $W_n^{(3)} = W_n^{(1)} \setminus \lcu v_{2n+1}^{3} \rcu$.}
        \label{fig: graph of Wn3 in ind complex}
    \end{figure}
    
    Let $W_n^{(3)} = W_n^{(1)} \setminus \lcu v_{2n+1}^{3} \rcu$ (see \Cref{fig: graph of Wn3 in ind complex}). Observe that in $W_n^{(3)}$, we fold $v_{2n-2}^3$ and $v_{2n-1}^4$ using $v_{2n}^3$. Following the folding of $v_{2n-1}^4$, we fold $v_{2n-3}^3$ and $v_{2n-4}^4$ using $v_{2n-2}^4$. On the other hand, we fold $v_{2n}^2$ using $v_{2n+2}^3$. Following the folding of $v_{2n}^2$, we fold $v_{2n-3}^1$ using $v_{2n-1}^1$. At last, after the folding of $v_{2n-3}^1$, we fold $v_{2n-6}^1$ and $v_{2n-4}^2$ using $v_{2n-4}^1$ (see \Cref{fig: foldings in Wn3}).

    \begin{figure}[H]
    \centering

        \caption{Foldings in $W_n^{(3)}$.}
        \label{fig: foldings in Wn3}
    \end{figure}

    Thus, using the fold Lemma (\Cref{foldlemma}), we have $$\ind \lno W_n^{(3)} \rno \simeq \ind \lno Z_{n-3}^{(1)} \sqcup P_2 \sqcup P_2 \sqcup P_2 \sqcup P_2 \sqcup P_2 \sqcup P_2 \sqcup P_2 \rno \simeq \Sigma^7 \lno \ind \lno Z_{n-3}^{(1)} \rno \rno.$$

    Hence, 
    \begin{align}\label{eqn: deletion of v(2n+1-3) in Wn(1)}
        \ind \lno W_n^{(1)} \setminus \lcu v_{2n+1}^3 \rcu\rno \simeq \Sigma^7 \lno \ind \lno Z_{n-3}^{(1)} \rno \rno.
    \end{align}

    \caseheading{The inclusion map $\bm{{\ind} \lno W_{n}^{(1)} \setminus N_{W_{n}^{(1)}} \lsq v_{2n+1}^{3} \rsq \rno \hookrightarrow {\ind} \lno W_{n}^{(1)} \setminus \lcu v_{2n+1}^{3} \rcu \rno}$ is null-homotopic:}

    To prove this, consider the induced subgraph $W_n^{(4)} = W_n^{(1)} \setminus \lcu v_{2n+1}^3, v_{2n+2}^3, v_{2n+1}^4\rcu$, that is, $W_n^{(4)}$ is precisely $W_n^{(2)} \sqcup \lcu v_{2n}^3 \rcu$ (see \Cref{fig: graph of Wn4 in ind complex}). Recall that, $W_n^{(2)} = W_n^{(1)} \setminus \lsq v_{2n+1}^3 \rsq$

    \begin{figure}[H]
    \centering

        \caption{The graph $W_n^{(4)} = W_n^{(1)} \setminus \lcu v_{2n+1}^3, v_{2n+2}^3, v_{2n+1}^4\rcu$.}
        \label{fig: graph of Wn4 in ind complex}
    \end{figure}
    
    We then have the following inclusion map, $$\ind \lno W_n^{(1)} \setminus N_{W_n^{(1)}} \lsq v_{2n+1}^{3} \rsq \rno \hookrightarrow \ind \lno W_n^{(4)} \rno \hookrightarrow \ind \lno W_n^{(1)} \setminus \lcu v_{2n+1}^{3} \rcu \rno.$$

    The same procedure used to determine the homotopy type of $\ind \lno W_n^{(2)} \rno$ above will follow here as well. The only exception is that here, this procedure leaves $v_{2n}^3$ as an isolated vertex (see \Cref{fig: foldings in Wn4}).
    
    \begin{figure}[H]
    \centering

        \caption{Foldings in $W_n^{(4)}$.}
        \label{fig: foldings in Wn4}
    \end{figure}
    
    Thus, using the fold Lemma (\Cref{foldlemma}), we have 
    \begin{align*}
        \ind \lno W_n^{(4)} \rno &\simeq \ind \lno Z_{n-2}^{(1)} \sqcup P_2 \sqcup P_2 \sqcup P_2 \sqcup \lcu v_{2n}^3 \rcu \rno \\
        & \simeq \Sigma^3 \lno \ind \lno Z_{n-2}^{(1)} \rno \rno * \lcu v_{2n}^3 \rcu\\
        & \simeq pt.
    \end{align*}

    In other words, $\ind \lno W_n^{(4)} \rno$ is contractible. Together with \Cref{prop: null-homotopy through factor}, this implies that the inclusion map, $$\ind \lno W_n^{(1)} \setminus N_{W_n^{(1)}} \lsq v_{2n+1}^{3} \rsq \rno \hookrightarrow \ind \lno W_n^{(4)} \rno,$$ is null-homotopic. Which in turn implies that the inclusion map $$\ind \lno W_n^{(1)} \setminus N_{W_n^{(1)}} \lsq v_{2n+1}^{3} \rsq \rno  \hookrightarrow \ind \lno W_n^{(1)} \setminus \lcu v_{2n+1}^{3} \rcu \rno,$$ is null-homotopic. Therefore, using \Cref{lemma: finding homotopy using link and deletion ind complex}, \Cref{eqn: link of v(2n+1-3) in Wn(1)} and \Cref{eqn: deletion of v(2n+1-3) in Wn(1)}, we have
    \begin{align*}
        \ind\lno W_n^{(1)} \rno &\simeq \ind \lno W_n^{{(1)}} \setminus \lcu v_{2t+1}^3 \rcu \rno \vee \Sigma \lno \ind \lno W_n^{(1)} \setminus N_{T_n^{(1)}}\lsq v_{2t+1}^3 \rsq \rno \rno\\
        &\simeq \Sigma^7 \lno \ind \lno Z_{n-3}^{(1)} \rno \rno \vee \Sigma\lno \Sigma^3 \lno \ind \lno Z_{n-2}^{(1)} \rno \rno\rno\\
        &\simeq \Sigma^7 \lno \ind \lno Z_{n-3}^{(1)} \rno \rno \vee \Sigma^4 \lno \ind \lno Z_{n-2}^{(1)} \rno \rno.
    \end{align*}

    Hence, $$\ind \lno Z_n^{(1)} \setminus \lcu v_{2n-1}^2 \rcu \rno \simeq \Sigma^7 \lno \ind \lno Z_{n-3}^{(1)} \rno \rno \vee \Sigma^4 \lno \ind \lno Z_{n-2}^{(1)} \rno \rno.$$

    This completes the proof of \Cref{lemma: deletion of v(2n-1)2 in Zn1}.
\end{proof}

\begin{theorem}\label{thm: Homotopy type of ind complex of Zn1}
    For $n\geq 1$,
    $$\ind \lno Z_n^{(1)} \rno \simeq \begin{cases}
                \mathbb{S}^2, & \text{if } n=1;\\
                \mathbb{S}^5 \vee \mathbb{S}^5, & \text{if } n=2;\\
                \mathbb{S}^6, & \text{if } n=3;\\
                \Sigma^{7}\lno \ind \lno Z_{n-3}^{(1)} \rno \rno \vee \Sigma^{4}\lno \ind \lno Z_{n-2}^{(1)} \rno \rno \vee \Sigma^{7}\lno \ind \lno Z_{n-3}^{(1)} \rno \rno, & \text{if } n \geq 4.
            \end{cases}$$
\end{theorem}

\begin{proof}
    For the case $n=1$, the graph $Z_1^{(1)}$ is isomorphic to the graph $X_{3}^{(2)}$ (see \Cref{fig: graph of Xn2 - ind complex}). Therefore, the result follows immediately from \Cref{lemma: homotopy of Xn2}. The cases $n=2,3$ follow by the same reasoning given for the initial cases as in \Cref{thm: homotopy type of Yn in Hn2}.
    
    For $n \geq 4$. observe that it suffices to prove that the inclusion map $$\ind \lno Z_n^{(1)} \setminus N \lsq v_{2n-1}^2 \rsq \rno  \hookrightarrow \ind \lno Z_n^{(1)} \setminus \lcu v_{2n-1}^2 \rcu \rno $$ is null-homotopic, since the result then follows from \Cref{lemma: finding homotopy using link and deletion ind complex}, \Cref{lemma: link of v(2n-1)2 in Zn1}, and \Cref{lemma: deletion of v(2n-1)2 in Zn1}.

    For this, we consider the induced subgraph $W_n^{(5)} = Z_n^{(1)} \setminus \lcu v_{2n-2}^2, v_{2n-1}^2, v_{2n}^2\rcu$.
    
    \begin{figure}[H]
    \centering

        \caption{The graph $W_n^{(5)} = Z_n^{(1)} \setminus \lcu v_{2n-2}^2, v_{2n-1}^2, v_{2n}^2\rcu$.}
        \label{fig: graph of Wn5}
    \end{figure}

    We then have the following inclusion map, $$\ind \lno Z_n^{(1)} \setminus N_{Z_n^{(1)}} \lsq v_{2n-1}^2 \rsq \rno \hookrightarrow \ind \lno W_n^{(5)} \rno \hookrightarrow \ind \lno Z_n^{(1)}\setminus \lcu v_{2n-1}^2 \rcu \rno.$$

    The same procedure used to determine the homotopy type of $\ind \lno Z_n^{(1)} \setminus N_{Z_n^{(1)}} \lsq v_{2n-1}^2 \rsq \rno$ in the proof of \Cref{lemma: link of v(2n-1)2 in Zn1} applies here as well. The only exception is that here, this procedure leaves $v_{2n}^3$ as an isolated vertex (see \Cref{fig: foldings in Wn5}).

    \begin{figure}[H]
    \centering

        \caption{Foldings in the graph $W_n^{(5)}$.}
        \label{fig: foldings in Wn5}
    \end{figure}
    
    Thus, using the fold Lemma (\Cref{foldlemma}), we have
    \begin{align*}
        \ind \lno W_n^{(5)} \rno &\simeq \ind \lno Z_{n-3}^{(1)} \sqcup P_2 \sqcup P_2 \sqcup P_2 \sqcup P_2 \sqcup P_2 \sqcup P_2 \sqcup \lcu v_{2n}^3 \rcu \rno\\
        &\simeq \lno \Sigma^6 \lno Z_{n-3}^{(1)} \rno \rno * \lcu v_{2n}^3 \rcu\\
        &\simeq pt.
    \end{align*}

    In other words, $\ind \lno W_n^{(5)} \rno$ is contractible. Together with \Cref{prop: null-homotopy through factor}, this implies that the inclusion map,,
    $$\ind \lno Z_n^{(1)} \setminus N_{Z_n^{(1)}} \lsq v_{2n-1}^2 \rsq \rno \hookrightarrow \ind \lno W_n^{(5)} \rno,$$
    is null-homotopic. Which in turn implies that, $$\ind \lno Z_n^{(1)} \setminus N_{Z_n^{(1)}} \lsq v_{2n-1}^2 \rsq \rno \hookrightarrow \ind \lno Z_n^{(1)}\setminus \lcu v_{2n-1}^2 \rcu \rno,$$
    is null-homotopic. This completes the proof of \Cref{thm: Homotopy type of ind complex of Zn1}.
\end{proof}

\subsection{Independence complex of $Z_n^{(2)}$}

Here, we will determine the homotopy type of the independence complex of $Z_n^{(2)}$, denoted $\ind \lno Z_n^{(2)} \rno$. Recall that, $$Z_n^{(2)} = H_{n+1}^3 \setminus \lcu v_{2n+2}^1, v_{2n+3}^1, v_{2n+4}^2, v_{2n+3}^3, v_{2n+4}^3, v_{2n+2}^4, v_{2n+3}^4 \rcu,$$ where $n\geq 1$. The main result is stated in the following theorem.

\begin{theorem}\label{thm: Homotopy type of ind complex of Zn2}
    For $n \geq 1$, the independence complex of $Z_n^{(2)}$ is contractible, when $n$ is odd and is homotopy equivalent to a $\lno 2n + 1 \rno-$dimensional sphere, when $n$ is even. In other words, we have

    $$\ind \lno Z_n^{(2)} \rno \simeq \begin{cases}
         pt, & \text{if } n \text{ is odd };\\
         \bbS^{2n+1}, & \text{if } n \text{ is even }.
    \end{cases}$$
\end{theorem}

\begin{proof}
    We will prove this separately for the cases when $n$ is odd and when $n$ is even. We will use the induction on $n$, along with the fold lemma.

    \caseheading{When $\bm{n}$ is odd:}
    
    For the base case, that is, $n=1$, note that, we fold $v_3^1$ and $v_3^2$ using $v_5^2$ in $Z_1^{(2)}$. Following the folding of $v_3^2$, we fold $v_2^3$ and $v_3^4$ using $v_4^3$. This procedure leaves us with two induced $P_2$, and one induced $P_3$ and $P_4$ each (see \Cref{fig: foldings in Z1(2)}).
    \begin{figure}[H]
    \centering

        \caption{Foldings in the graph $Z_1^{(2)}$.}
        \label{fig: foldings in Z1(2)}
    \end{figure}
    
    Thus, using the fold Lemma (\Cref{foldlemma}) and \Cref{thm: homotopy type of ind complex of path graph}, we have
    \begin{align*}
        \ind \lno Z_1^{(2)} \rno & \simeq \ind \lno P_4 \sqcup P_3 \sqcup P_2 \sqcup P_2 \rno\\
        &\simeq \Sigma^2 \lno \ind \lno P_3 \rno \rno * \ind \lno P_4 \rno\\
        &\simeq pt.
    \end{align*}
    Thus, the result holds for $n=1$. Assuming the given result is true for all odd $n \leq 2t+1$, we aim to prove it for $n=2t+3$.

    In $Z_{2t+3}^{(2)}$, we fold $v_{4t+7}^1$ and $v_{4t+7}^2$ using $v_{4t+9}^2$. Following the folding of $v_{4t+7}^1$, we fold $v_{4t+4}^1$ and $v_{4t+6}^2$ using $v_{4t+6}^1$. On the other hand, after the folding of $v_{4t+7}^2$, we fold $v_{4t+6}^3$ and $v_{4t+7}^4$ using $v_{4t+8}^3$. At last, once $v_{4t+7}^4$ is folded, we fold $v_{4t+5}^3$ and $v_{4t+4}^4$ using $v_{4t+6}^4$ (see \Cref{fig: foldings in Z(2t+3)3}).

    \begin{figure}[H]
    \centering

        \caption{Foldings in the graph $Z_{2t+3}^{(3)}$.}
        \label{fig: foldings in Z(2t+3)3}
    \end{figure}

    Thus, using the fold Lemma (\Cref{foldlemma}), we have
    \begin{align*}
        \ind \lno Z_{2t+3}^{(2)} \rno &\simeq \ind\lno Z_{2t+1}^{(2)} \sqcup P_2 \sqcup P_2 \sqcup P_2 \sqcup P_2 \rno\\
        &\simeq \Sigma^4 \lno \ind \lno Z_{2t+1}^{(2)} \rno \rno\\
        &\simeq pt.
    \end{align*}

    This completes the proof for the case when $n$ is odd.

    \caseheading{When $\bm{n}$ is even:}
    
    For the base case, that is, $n=2$, note that, we fold $v_5^1$ and $v_5^2$ using $v_7^2$ in $Z_2^{(2)}$. Following the folding of $v_5^1$, we fold $v_2^1$ and $v_4^2$ using $v_4^1$. Once $v_2^1$ is folded, we fold $v_1^2$ and $v_3^2$ using $v_1^1$. On the other hand, using the folding of $v_5^2$, we fold $v_4^3$ and $v_5^4$ using $v_6^3$. At last, after the folding of $v_5^4$, we fold $v_3^3$ and $v_2^4$ using $v_4^4$. This procedure leaves us with five induced $P_2$, and one induced $P_3$ (see \Cref{fig: foldings in Z2(2)}).

    \begin{figure}[H]
    \centering

        \caption{Foldings in the graph $Z_2^{(2)}$.}
        \label{fig: foldings in Z2(2)}
    \end{figure}
    
    Thus, using the fold Lemma (\Cref{foldlemma}) and \Cref{thm: homotopy type of ind complex of path graph}, we have
    \begin{align*}
        \ind \lno Z_2^{(2)} \rno & \simeq \ind \lno P_3 \sqcup P_2 \sqcup P_2 \sqcup P_2\sqcup P_2\sqcup P_2 \rno\\
        &\simeq \Sigma^5 \lno \ind \lno P_3 \rno \rno \\
        &\simeq \Sigma^5 \lno \bbS^0 \rno \\
        &\simeq \bbS^5.
    \end{align*}

    Thus, the result holds for $n=2$. Assuming the given result is true for all even $n \leq 2t$, we aim to prove it true for $n=2t+2$, that is, to show that $\ind \lno Z_{2t+2}^{(2)} \rno \simeq \bbS^{4t+5}$.
    
    The procedure is analogous to the one used in the case when $n$ is odd, except for minor changes in labeling (see \Cref{fig: foldings in Z(2t+2)2}).

    \begin{figure}[H]
    \centering

        \caption{Foldings in the graph $Z_{2t+2}^{(2)}$.}
        \label{fig: foldings in Z(2t+2)2}
    \end{figure}
    
    Thus, using the fold Lemma (\Cref{foldlemma}), we have
    \begin{align*}
        \ind \lno Z_{2t+2}^{(2)} \rno &\simeq \ind\lno Z_{2t}^{(2)} \sqcup P_2 \sqcup P_2 \sqcup P_2 \sqcup P_2 \rno\\
        &\simeq \Sigma^4 \lno \ind \lno Z_{2t}^{(2)} \rno \rno\\
        &\simeq \Sigma^4 \lno \bbS^{4t+1}\rno\\
        &\simeq \bbS^{4t+5}.
    \end{align*}

    This completes the case when $n$ is even, thereby completing the proof of \Cref{thm: Homotopy type of ind complex of Zn2}.
\end{proof}

\subsection{Independence complex of $Z_n^{(3)}$}

Here, we will determine the homotopy type of the independence complex of $Z_{n}^{(3)}$, denoted $\ind \lno Z_{n}^{(3)} \rno$. Recall that, for $n\geq 3$, $$Z_n^{(3)} = H_{n}^3 \setminus \lcu v_{2n-4}^1, v_{2n-3}^1, v_{2n-2}^1, v_{2n-1}^1, v_{2n}^1, v_{2n+1}^1, v_{2n-2}^2, v_{2n+2}^2 \rcu.$$ Similarly, for $n=2, 3$, 
\begin{align*}
    Z_1^{(3)} &= H_{n}^3 \setminus \lcu v_{1}^1, v_{2}^1, v_{3}^1, v_{4}^2 \rcu;\\
    Z_2^{(3)} &= H_{n}^3 \setminus \lcu v_{1}^1, v_{2}^1, v_{3}^1, v_{4}^1, v_{5}^1, v_{2}^2, v_{6}^2 \rcu.
\end{align*}
We again do this by carefully looking into the link and deletion of $v_{2n}^2$ in the $\ind \lno Z_n^{(3)} \rno$, that is, $\ind \lno Z_n^{(3)} \setminus N \lsq v_{2n}^2 \rsq \rno$ and $\ind \lno Z_n^{(3)} \setminus \lcu v_{2n}^2 \rcu \rno$, respectively, and then analyzing the inclusion maps between these two complexes.

\begin{lemma}\label{lemma: link of v(2n)2 in Zn3}
    For $n\geq 3$, $\ind \lno Z_n^{(3)} \setminus N_{Z_n^{(3)}} \lsq v_{2n}^2 \rsq \rno \simeq \Sigma^2 \lno \ind \lno Z_{n-2}^{(1)} \rno \rno$.
\end{lemma}

\begin{figure}[H]
    \centering

        \caption{Foldings in the graph $Z_n^{(3)} \setminus N_{Z_n^{(3)}}  \lsq v_{2n}^2 \rsq$.}
        \label{fig: foldings in Zn3 - N[2n(2)]}
    \end{figure}

    Therefore, using the fold Lemma (\Cref{foldlemma}), we have 
    \begin{align*}
        \ind \lno Z_n^{(3)} \setminus N_{Z_n^{(3)}}  \lsq v_{2n}^2 \rsq \rno &\simeq \ind \lno Z_{n-2}^{(1)} \sqcup P_2 \sqcup P_2\rno\\
        &\simeq \Sigma^2 \lno \ind \lno Z_{n-2}^{(1)} \rno \rno.
    \end{align*}

    This completes the proof of \Cref{lemma: link of v(2n)2 in Zn3}.
\end{proof}

\begin{lemma}\label{lemma: deletion of v(2n)2 in Zn3}
    For $n\geq 3$, $$\ind \lno Z_n^{(3)} \setminus \lcu v_{2n}^2 \rcu \rno \simeq \Sigma^7 \lno \ind \lno Z_{n-2}^{(3)} \rno \rno.$$
\end{lemma}

\begin{figure}[H]
\centering

    \caption{The graph $Z_n^{(3)} \setminus \lcu v_{2n}^2 \rcu$.}
    \label{fig: graph of Zn3 - v-2n(2) in ind complex}
\end{figure}

\begin{proof}
    We perform foldings on the vertices of $Z_n^{(3)} \setminus \lcu v_{2n}^2 \rcu$ (see \Cref{fig: graph of Zn3 - v-2n(2) in ind complex}), according to the steps outlined below.
    \begin{enumerate}[label = (\roman*)]
        \item At first, we fold $v_{2n-1}^3$ and $v_{2n+1}^3$ using $v_{2n-1}^2$.
        \item Once $v_{2n+1}^3$ is folded, we fold $v_{2n-1}^4$ using $v_{2n+1}^4$.
        \item Following the folding of $v_{2n-1}^4$, we fold $v_{2n-4}^4$ and $v_{2n-3}^3$ using $v_{2n-2}^4$.
        \item After the folding of $v_{2n-3}^3$ and $v_{2n-1}^3$ (from step (i)), we fold $v_{2n-4}^2$ using $v_{2n-2}^3$.
        \item Once $v_{2n-4}^2$ is folded, we fold $v_{2n-7}^1$ using $v_{2n-5}^1$.
        \item At last, following the folding of $v_{2n-7}^1$, we fold $v_{2n-10}^1$ and $v_{2n-8}^2$ using $v_{2n-8}^1$.
    \end{enumerate}

    This procedure leaves us with seven copies of induced $P_2$ and along with $Z_{n-3}^{(3)}$ (see \Cref{fig: foldings in Zn3 - v-2n(2)}).

    \begin{figure}[H]
    \centering

        \caption{Foldings in the graph $Z_n^{(3)} \setminus \lcu v_{2n}^2 \rcu$.}
        \label{fig: foldings in Zn3 - v-2n(2)}
    \end{figure}

    Therefore, using the fold Lemma (\Cref{foldlemma}), we have 
    \begin{align*}
        \ind \lno Z_n^{(3)} \setminus \lcu v_{2n}^2 \rcu \rno &\simeq \ind \lno Z_{n-3}^{(3)} \sqcup P_2 \sqcup P_2 \sqcup P_2 \sqcup P_2 \sqcup P_2 \sqcup P_2 \sqcup P_2 \rno\\
        &\simeq \Sigma^7 \lno \ind \lno Z_{n-3}^{(3)} \rno \rno.
    \end{align*}

    This completes the proof of \Cref{lemma: deletion of v(2n)2 in Zn3}.
\end{proof}

We now have all the necessary results to discuss the homotopy type of $Z_{n}^{(3)}$. We state this result in the following theorem.

\begin{theorem}\label{thm: Homotopy type of ind complex of Zn3}
    For $n\geq 1$, 
        $$\ind \lno Z_{n}^{(3)} \rno \simeq \begin{cases}
            \mathbb{S}^2 \vee \mathbb{S}^2, & \text{if } n=1;\\
            \mathbb{S}^4, & \text{if } n=2;\\
            \mathbb{S}^5 \vee \mathbb{S}^5, & \text{if } n=3;\\
            \mathbb{S}^8 \vee \mathbb{S}^8 \vee \mathbb{S}^8 \vee \mathbb{S}^8, & \text{if } n=4;\\
            \Sigma^7 \lno \ind \lno Z_{n-3}^{(3)} \rno \rno \vee \Sigma^3 \lno \ind \lno Z_{n-2}^{(1)} \rno \rno, & \text{if } n \geq 5.
        \end{cases}$$
\end{theorem}

\begin{proof}
    When $n=1$, the graph $Z_1^{(3)}$ here is isomorphic to $H_{1 \times 1 \times 2}$ (see \Cref{fig: graph of H1n in ind complex}) and the result then follows from \Cref{thm: homotopy type of H11n}. All the remaining cases follow the same argument given for the initial cases in the proof of \Cref{thm: homotopy type of Yn in Hn2}.
    
    For $n \geq 5$, observe that it suffices to prove that the inclusion map $$\ind \lno Z_n^{(3)} \setminus N_{Z_n^{(3)}} \lsq v_{2n}^2 \rsq \rno \hookrightarrow \ind \lno Z_n^{(3)} \setminus \lcu v_{2n}^2 \rcu \rno $$ is null-homotopic, since the result then follows from \Cref{lemma: finding homotopy using link and deletion ind complex}, and \Cref{lemma: link of v(2n)2 in Zn3}, \Cref{lemma: deletion of v(2n)2 in Zn3}.

    For this, consider the induced subgraph $Z_n^{(3)} \setminus \lcu v_{2n}^2, v_{2n+1}^2\rcu$ of $Z_n^{(3)}$ (see \Cref{fig: graph of Zn3 - {v-2n(2), v-(2n+1)(2)} in ind complex}).

    \begin{figure}[H]
    \centering

        \caption{The graph $Z_n^{(3)} \setminus \lcu v_{2n}^2, v_{2n+1}^2\rcu$.}
        \label{fig: graph of Zn3 - {v-2n(2), v-(2n+1)(2)} in ind complex}
    \end{figure}
    
    We then have the following inclusion map, $$\ind \lno Z_n^{(3)} \setminus N_{Z_n^{(3)}} \lsq v_{2n}^2 \rsq \rno \hookrightarrow \ind \lno Z_n^{(3)} \setminus \lcu v_{2n}^2, v_{2n+1}^2\rcu \rno \hookrightarrow \ind \lno Z_n^{(3)}\setminus \lcu v_{2n}^2 \rcu \rno.$$

    In $Z_n^{(3)} \setminus \lcu v_{2n}^2, v_{2n+1}^2\rcu$, we fold $v_{2n+1}^3$ using $v_{2n-1}^2$. This procedure leaves $v_{2n+2}^3$ as an isolated vertex (see \Cref{fig: foldings in Zn3 - {v-2n(2), v-(2n+1)(2)}}). 

    \begin{figure}[H]
    \centering

        \caption{Foldings in the graph $Z_n^{(3)} \setminus \lcu v_{2n}^2, v_{2n+1}^2\rcu$.}
        \label{fig: foldings in Zn3 - {v-2n(2), v-(2n+1)(2)}}
    \end{figure}
    
    Thus, using the fold Lemma (\Cref{foldlemma}), we have
    \begin{align*}
       \ind \lno Z_n^{(3)} \setminus \lcu v_{2n}^2, v_{2n+1}^2\rcu \rno  &\simeq \ind \lno \lno Z_{n-3}^{(3)} \setminus \lcu v_{2n}^2, v_{2n+1}^2, v_{2n+1}^3, v_{2n+2}^3\rcu \rno \sqcup \lcu v_{2n+2}^3 \rcu \rno\\
        &\simeq \ind \lno Z_{n-3}^{(3)} \setminus \lcu v_{2n}^2, v_{2n+1}^2, v_{2n+1}^3, v_{2n+2}^3\rcu \rno *  \lcu v_{2n+2}^3\rcu \\
        &\simeq pt. 
    \end{align*}

    In other words, $\ind \lno Z_n^{(3)} \setminus \lcu v_{2n}^2, v_{2n+1}^2\rcu \rno  $ is contractible. Together with \Cref{prop: null-homotopy through factor}, this implies that the inclusion map,,
    $$\ind \lno Z_n^{(1)} \setminus N_{Z_n^{(1)}} \lsq v_{2n-1}^2 \rsq \rno \hookrightarrow \ind \lno Z_n^{(3)} \setminus \lcu v_{2n}^2, v_{2n+1}^2\rcu \rno,$$
    is null-homotopic. Which in turn implies that, $$\ind \lno Z_n^{(1)} \setminus N_{Z_n^{(1)}} \lsq v_{2n-1}^2 \rsq \rno \hookrightarrow \ind \lno Z_n^{(1)}\setminus \lcu v_{2n-1}^2 \rcu \rno,$$
    is null-homotopic. This completes the proof of \Cref{thm: Homotopy type of ind complex of Zn3}.
\end{proof}

\subsection{Independence complex of $H_n^3$}

We now discuss the homotopy type of the independence complex of $H_n^3$, denoted $\ind \lno H_n^3 \rno$. Recall that $H_n^3 = H_{1 \times 3 \times n}$. We do this by examining the link and deletion of $v_{2n}^2$ in $\ind \lno H_n^3 \rno$, that is, $\ind \lno H_n^3 \setminus N_{H_n^3} \lsq v_{2n}^2 \rsq \rno$ and $\ind \lno H_n^3 \setminus \lcu v_{2n}^2 \rcu \rno$, respectively, and then analyzing the inclusion maps between these two complexes.

\begin{lemma}\label{lemma: link of v(2n)2 in Hn3}
    For $n\geq 3$, $\ind \lno H_n^3 \setminus N_{H_{n}^3} \lsq v_{2n}^2 \rsq \rno \simeq \Sigma^4 \lno \ind \lno Z_{n-2}^{(1)} \rno \rno$.
\end{lemma}

\begin{figure}[H]
\centering

    \caption{The graph $H_n^3 \setminus N_{H_{n}^3} \lsq v_{2n}^2 \rsq$.}
    \label{fig: graph of Hn3 - N[v(2n)2] in ind complex}
\end{figure}

\begin{proof}
    Observe that in $ H_n^3 \setminus N_{H_{n}^3} \lsq v_{2n}^2 \rsq$ (see \Cref{fig: graph of Hn3 - N[v(2n)2] in ind complex}), we fold $v_{2n-4}^1$ and $v_{2n-2}^2$ using $v_{2n-2}^1$. On the other hand, we fold $v_{2n}^3$ and $v_{2n+1}^4$ using $v_{2n+2}^3$. Following the folding of $v_{2n+1}^4$, we fold $v_{2n-1}^3$ and $v_{2n-2}^4$ using $v_{2n}^4$. At last, we fold $v_{2n}^1$ using $v_{2n+2}^2$ (see \Cref{fig: foldings in Hn3 - N[v(2n)2]}).

    \begin{figure}[h!]
    \centering

        \caption{Foldings in the graph $H_n^3 \setminus N_{H_{n}^3} \lsq v_{2n}^2 \rsq$.}
        \label{fig: foldings in Hn3 - N[v(2n)2]}
    \end{figure}

    Therefore, using the fold Lemma (\Cref{foldlemma}), we have
    \begin{align*}
        \ind \lno H_n^3 \setminus N_{H_{n}^3} \lsq v_{2n}^2 \rsq \rno &\simeq \ind \lno Z_{n-2}^{(1)} \sqcup P_2 \sqcup P_2 \sqcup P_2 \sqcup P_2 \rno\\
        &\simeq \Sigma^4 \lno \ind \lno Z_{n-2}^{(1)} \rno \rno.
    \end{align*}

    This completes the proof \Cref{lemma: link of v(2n)2 in Hn3}.
\end{proof}

\begin{lemma}\label{lemma: deletion of v(2n)2 in Hn3}
    For $n\geq 4$, $$\ind \lno H_n^3 \setminus \lcu v_{2n}^2 \rcu \rno \simeq \Sigma^9 \lno \ind \lno Z_{n-3}^{(3)} \rno \rno \vee \Sigma^4 \lno \ind \lno Z_{n-2}^{(2)} \rno \rno.$$
\end{lemma}

\begin{figure}[h!]
\centering

    \caption{The graph $U_n^{(1)} = H_n^3 \setminus \lcu v_{2n}^2 \rcu$.}
    \label{fig: graph of Un1 in ind complex}
\end{figure}

\begin{proof}
    Let $U_n^{(1)} = H_n^3 \setminus \lcu v_{2n}^2 \rcu$ (see \Cref{fig: graph of Un1 in ind complex}). We first discuss the homotopy type of

    \begin{enumerate}
        \item $\ind \lno U_n^{(1)} \setminus N_{U_n^{(1)}} \lsq v_{2n+1}^3 \rsq \rno$; and,
        \item $\ind \lno U_n^{(1)} \setminus \lcu v_{2n+1}^3 \rcu \rno$.
    \end{enumerate}

    Later, we show that the inclusion map $$\ind \lno U_n^{(1)} \setminus N_{U_n^{(1)}} \lsq v_{2n+1}^3 \rsq \rno \hookrightarrow \ind \lno U_n^{(1)} \setminus \lcu v_{2n+1}^3 \rcu \rno,$$ is null-homotopic.

    \caseheading{Homotopy type of $\bm{{\ind} \lno U_{n}^{(1)} \setminus N_{U_{n}^{(1)}} \lsq v_{2n+1}^{3} \rsq \rno}$:}

    \begin{figure}[H]
    \centering

        \caption{Foldings in the graph $U_n^{(2)}$.}
        \label{fig: foldings in Un2}
    \end{figure}
    
    Therefore, using the fold Lemma (\Cref{foldlemma}), we have, $$\ind \lno U_n^{(2)} \rno \simeq \ind \lno Z_{n-2}^{(2)} \sqcup P_2 \sqcup P_2 \sqcup P_2 \rno \simeq \Sigma^3 \lno \ind \lno Z_{n-2}^{(2)} \rno \rno.$$

    Hence,
    \begin{align}\label{eqn: link of v(2n+1 - 3) in Un(1)}
        \ind \lno U_n^{(1)} \setminus N_{U_n^{(1)}} \lsq v_{2n+1}^3 \rsq \rno \simeq \Sigma^3 \lno \ind \lno Z_{n-2}^{(2)} \rno \rno.
    \end{align}

    \caseheading{Homotopy type of $\bm{{\ind} \lno U_{n}^{(1)} \setminus \lcu v_{2n+1}^{3} \rcu \rno}$:}

    \begin{figure}[H]
    \centering

        \caption{The graph $U_n^{(3)} = U_n^{(1)} \setminus \lcu v_{2n+1}^3 \rcu$.}
        \label{fig: graph of Un3 in ind complex}
    \end{figure}

    Let $U_n^{(3)} = U_n^{(1)} \setminus \lcu v_{2n+1}^3 \rcu$ (see \Cref{fig: graph of Un3 in ind complex}). We perform foldings on the vertices of $U_n^{(3)}$ according to the steps outlined below.

    \begin{enumerate}[label = (\roman*)]
        \item In $U_n^{(3)}$, we fold $v_{2n-1}^4$ using $v_{2n+1}^4$.
        \item Following the folding of $v_{2n-1}^4$, we fold $v_{2n-3}^3$ and $v_{2n-4}^4$ using $v_{2n-2}^4$.
        \item On the other hand, we fold $v_{2n+2}^2$ using $v_{2n+2}^3$.
        \item Once $v_{2n+2}^2$ is folded, we fold $v_{2n-1}^1$ using $v_{2n+1}^1$.
        \item After the folding of $v_{2n-1}^1$, we fold $v_{2n-4}^1$ and $v_{2n-2}^2$ using $v_{2n-2}^1$.
        \item Once $v_{2n-2}^2$ is folded, we fold $v_{2n-1}^3$ using $v_{2n-1}^2$.
        \item Observe that $v_{2n-3}^3$ was already folded (step (ii)), and with the folding of $v_{2n-1}^3$, we fold $v_{2n-4}^2$ using $v_{2n-2}^3$.
        \item Again, since $v_{2n-4}^1$ was already folded (step (v)), following the folding of $v_{2n-4}^2$, we fold $v_{2n-7}^1$ using $v_{2n-5}^1$.
        \item At last, after the folding of $v_{2n-7}^1$, we fold $v_{2n-10}^1$ and $v_{2n-8}^2$ using $v_{2n-8}^1$ (see \Cref{fig: foldings in Un3}).
    \end{enumerate}

    \begin{figure}[h!]
    \centering

        \caption{Folding in the graph $U_n^{(3)}$.}
        \label{fig: foldings in Un3}
    \end{figure}
    
    Thus, using the fold Lemma (\Cref{foldlemma}), we have
    \begin{align*}
        \ind \lno U_n^{(3)} \rno &\simeq \ind \lno Z_{n-3}^{(3)} \sqcup P_2 \sqcup P_2 \sqcup P_2 \sqcup P_2 \sqcup P_2 \sqcup P_2 \sqcup P_2 \sqcup P_2 \sqcup P_2 \rno\\
        &\simeq \Sigma^9 \lno \ind \lno Z_{n-3}^{(3)} \rno \rno.
    \end{align*}

    Hence,
    \begin{align}\label{eqn: deletion of v(2n+1 - 3) in Un(1)}
        \ind \lno U_n^{(1)} \setminus \lcu v_{2n+1}^3 \rcu \rno \simeq \Sigma^9 \lno \ind \lno Z_{n-3}^{(3)} \rno \rno.
    \end{align}

    \caseheading{The inclusion map $\bm{{\ind} \lno  U_{n}^{(1)} \setminus N_{U_{n}^{(1)}} \lsq v_{2n+1}^3 \rsq \rno \hookrightarrow {\ind} \lno  U_{n}^{(1)} \setminus \lcu v_{2n+1}^3 \rcu \rno}$ is null-homotopic:}

    \begin{figure}[H]
    \centering

        \caption{The graph $U_n^{(4)} = U_n^{(1)} \setminus \lcu v_{2n}^3, v_{2n+1}^3, v_{2n+1}^4 \rcu$.}
        \label{fig: graph of Un4 in ind complex}
    \end{figure}

    To prove this, we consider the induced subgraph $U_n^{(4)} = U_n^{(1)} \setminus \lcu v_{2n}^3, v_{2n+1}^3, v_{2n+1}^4 \rcu$, that is, $U_n^{(4)}$ is precisely $U_n^{(2)} \sqcup \lcu v_{2n+2}^3 \rcu$ (see \Cref{fig: graph of Un4 in ind complex}). We then have the following inclusion map, $$\ind \lno U_n^{(1)} \setminus N_{U_n^{(1)}} \lsq v_{2n+1}^3 \rsq \rno \hookrightarrow \ind \lno U_n^{(4)} \rno \hookrightarrow \ind \lno U_n^{(1)} \setminus \lcu v_{2n+1}^3 \rcu \rno.$$

    In $U_n^{(4)}$, we fold $v_{2n-3}^1$ and $v_{2n-3}^2$ using $v_{2n-1}^2$. On the other hand, we fold $v_{2n-1}^3$ and $v_{2n-2}^4$ using $v_{2n}^4$. After this procedure, we are left with two copies of an induced $P_2$ and one copy of an induced $P_7$ (see \Cref{fig: foldings in Un4}), along with $H_{n-2}^3 \setminus \lcu v_{2n-3}^1, v_{2n-3}^2, v_{2n-2}^2 \rcu$.
    
    \begin{figure}[h!]
    \centering

        \caption{Foldings in the graph $U_n^{(4)}$.}
        \label{fig: foldings in Un4}
    \end{figure}
    
    Thus, using the fold Lemma (\Cref{foldlemma}), we have
    \begin{align*}
        \ind \lno U_n^{(4)} \rno &\simeq \ind \lno \lno H_{n-2}^3 \setminus \lcu v_{2n-3}^1, v_{2n-3}^2, v_{2n-2}^2 \rcu \rno \sqcup P_2 \sqcup P_2 \sqcup P_7 \rno\\
        &\simeq \Sigma^2 \lno \ind\lno H_{n-2}^3 \setminus \lcu v_{2n-3}^1, v_{2n-3}^2, v_{2n-2}^2 \rcu \rno \rno * \ind \lno P_7 \rno.
    \end{align*}
    Using \Cref{thm: homotopy type of ind complex of path graph}, we know that $\ind \lno P_7 \rno \simeq pt$. Hence, $$\ind \lno U_n^{(4)} \rno \simeq pt.$$

    In other words, $\ind \lno U_n^{(4)} \rno$ is contractible. This implies the inclusion map, $$\ind \lno U_n^{(1)} \setminus N_{U_n^{(1)}} \lsq v_{2n+1}^3 \rsq \rno \hookrightarrow \ind \lno U_n^{(4)} \rno,$$ is null-homotopic. Which in turn implies that the inclusion map $$\ind \lno U_n^{(1)} \setminus N_{U_n^{(1)}} \lsq v_{2n+1}^3 \rsq \rno \hookrightarrow \ind \lno U_n^{(1)} \setminus \lcu v_{2n+1}^3 \rcu \rno,$$ is null-homotopic. Therefore, using \Cref{lemma: finding homotopy using link and deletion ind complex}, \Cref{eqn: link of v(2n+1 - 3) in Un(1)} and \Cref{eqn: deletion of v(2n+1 - 3) in Un(1)}, we have

    \begin{align*}
        \ind \lno U_n^{(1)} \rno &\simeq \ind \lno U_n^{(1)} \setminus \lcu v_{2n+1}^3 \rcu \rno \vee \Sigma \lno \ind \lno U_n^{(1)} \setminus N_{U_n^{(1)}} \lsq v_{2n+1}^3 \rsq \rno \rno \\ 
        &\simeq \Sigma^9 \lno \ind \lno Z_{n-3}^{(3)} \rno \rno \vee \Sigma \lno \Sigma^3 \lno \ind \lno Z_{n-2}^{(2)} \rno \rno \rno\\
        &\simeq \Sigma^9 \lno \ind \lno Z_{n-3}^{(3)} \rno \rno \vee \Sigma^4 \lno \ind \lno Z_{n-2}^{(2)} \rno \rno.
    \end{align*}

    Hence, $$\ind \lno H_n^3 \setminus \lcu v_{2n}^2 \rcu \rno \simeq \Sigma^9 \lno \ind \lno Z_{n-3}^{(3)} \rno \rno \vee \Sigma^4 \lno \ind \lno Z_{n-2}^{(2)} \rno \rno.$$

    This completes the proof of \Cref{lemma: deletion of v(2n)2 in Hn3}.
\end{proof}

We now have all the necessary results to determine the homotopy type $\ind \lno H_{n}^{(3)} \rno$. We state this result in the following theorem.

\begin{theorem}\label{thm: Homotopy type of ind complex of Hn3}
    For $n\geq 1$, the independence complex of the triple hexagonal line tiling, that is, $\ind \lno H_n^3 \rno$, has the following homotopy type, $$\ind \lno H_n^3 \rno \simeq \begin{cases}
                \mathbb{S}^3 \vee \mathbb{S}^3, & \text{if } n=1;\\
                \mathbb{S}^4 \vee \mathbb{S}^4 \vee \mathbb{S}^4, & \text{if } n=2;\\
                \mathbb{S}^7, & \text{if } n=3;\\
                \mathbb{S}^6 \vee \mathbb{S}^{10}, \vee \mathbb{S}^{10} \vee \mathbb{S}^{10}, \vee \mathbb{S}^{10}, & \text{if } n=4;\\
                \Sigma^9 \lno \ind \lno Z_{n-3}^{(3)} \rno \rno \vee \Sigma^4 \lno \ind \lno Z_{n-2}^{(2)} \rno \rno \vee \Sigma^5 \lno \ind \lno Z_{n-2}^{(1)} \rno \rno, & \text{if } n \geq 5.
            \end{cases}$$
\end{theorem}

\begin{proof}
    Clearly the $H_1^3$ and $H_2^3$ are isomorphic to the graph $H_{1 \times 1 \times 3}$ (see \Cref{fig: graph of H1n in ind complex}) and $H_{1 \times 2  \times 3}$ (see \Cref{fig: graph of H(1_2_n) in ind complex}), respectively. Therefore, the result for the case $n=1$ and $2$ follows from \Cref{thm: homotopy type of H11n} and \Cref{thm: homotopy type of Hn2}. The proof for the remaining cases is omitted, as it follows the same pattern as the argument for $n \geq 5$. The intermediate graphs arising from the link and deletion operations are even simpler here, and their homotopy types can be computed directly via the fold lemma. Furthermore, the inclusion of the link into the deletion is null-homotopic, because the spheres appearing in the homotopy type of the link occur in strictly lower dimensions than those in the deletion.
    
    For the case when $n\geq 5$, observe that it suffices to prove that the inclusion map $$ \ind \lno H_n^3 \setminus N_{H_{n}^3} \lsq v_{2n}^2 \rsq \rno \hookrightarrow \ind \lno H_n^3 \setminus \lcu v_{2n}^2 \rcu \rno$$ is null-homotopic, since the result then follows from \Cref{lemma: link of v(2n)2 in Hn3}, \Cref{lemma: deletion of v(2n)2 in Hn3} and \Cref{lemma: finding homotopy using link and deletion ind complex}.

    For this, consider the induced subgraph $U_n^{(5)} = H_n^{(3)} \setminus \lcu v_{2n-1}^1, v_{2n}^2, v_{2n+1}^2\rcu$ of $H_n^{(3)}$ (see \Cref{fig: graph of Un5 in ind complex}). 

    \begin{figure}[H]
    \centering

        \caption{The graph $U_n^{(5)} = H_n^{(3)} \setminus \lcu v_{2n-1}^1, v_{2n}^2, v_{2n+1}^2\rcu$.}
        \label{fig: graph of Un5 in ind complex}
    \end{figure}
    
    We then have the following inclusion map, $$\ind \lno \ind \lno H_n^3 \setminus N_{H_{n}^3} \lsq v_{2n}^2 \rsq \rno \rno \hookrightarrow \ind \lno U_n^{(5)} \rno \hookrightarrow \ind \lno \ind \lno H_n^3 \setminus \lcu v_{2n}^2 \rcu \rno \rno.$$

    The same procedure used to determine the homotopy type of $\ind \lno H_n^3 \setminus N_{H_{n}^3} \lsq v_{2n}^2 \rsq \rno$ in the proof of \Cref{lemma: link of v(2n)2 in Hn3} applies here as well. The only exception is that here, this procedure leaves $v_{2n-1}^2$ as an isolated vertex (see \Cref{fig: foldings in Un5}). 

    \begin{figure}[h!]
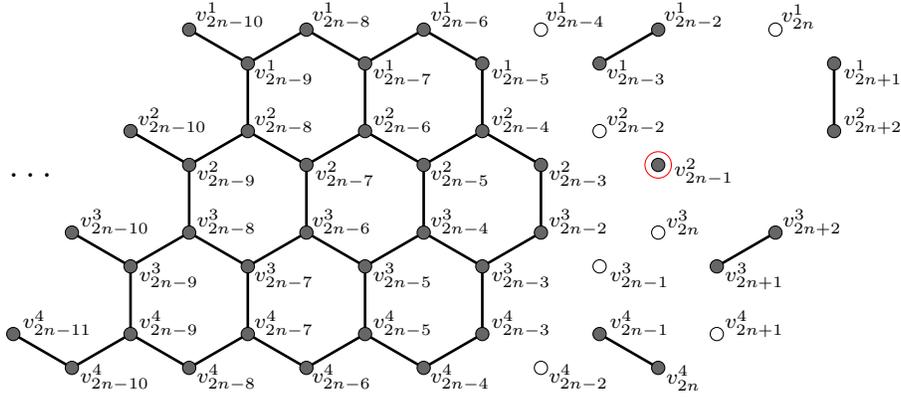

    \centering

        \caption{Foldings in the graph $U_n^{(5)}$.}
        \label{fig: foldings in Un5}
    \end{figure}
    
    Thus, using the fold Lemma (\Cref{foldlemma}), we have
    \begin{align*}
        \ind \lno U_n^{(5)} \rno &\simeq \ind \lno Z_{n-2}^{(1)} \sqcup P_2 \sqcup P_2 \sqcup P_2 \sqcup P_2 \sqcup \lcu v_{2n}^3 \rcu \rno \\
        & \simeq \Sigma^4 \lno \ind \lno Z_{n-2}^{(1)} \rno \rno * \ind \lno \lcu v_{2n-1}^2 \rcu \rno\\
        & \simeq pt.
    \end{align*}

    In other words, $\ind \lno U_n^{(5)} \rno  $ is contractible. This implies the inclusion map,
    $$\ind \lno H_n^3 \setminus N_{H_{n}^3} \lsq v_{2n}^2 \rsq \rno \hookrightarrow \ind \lno U_n^{(5)} \rno,$$
    is null-homotopic. Which in turn implies that, $$\ind \lno H_n^3 \setminus N_{H_{n}^3} \lsq v_{2n}^2 \rsq \rno \hookrightarrow \ind \lno H_n^3 \setminus \lcu v_{2n}^2 \rcu \rno,$$
    is null-homotopic. This completes the proof of \Cref{thm: Homotopy type of ind complex of Hn3}.
\end{proof}

\begin{figure}[h!]
  \centering
  \captionsetup{justification=centering}
  \includegraphics[scale = 0.9]{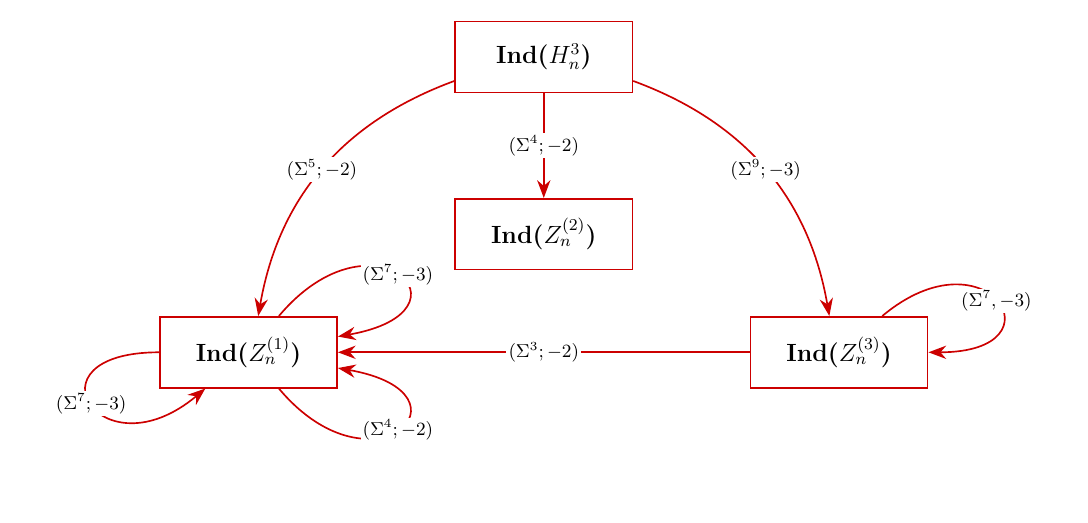}
  \caption{Flow chart illustrating the reduction steps and homotopy equivalences used in computing the homotopy type of $H_n^{3}$.}
  \label{fig: flowchart}
\end{figure}

The flowchart presented in \Cref{fig: flowchart} summarizes the results obtained and the procedure for computing the homotopy type of $H_n^{3}$. Each arrow emanating from a block indicates that the independence complex in that block is homotopy equivalent to the independence complex in the block to which the arrow points. An arrow represents a wedge–sum decomposition, and its label $(\Sigma^{r}; s)$ specifies the suspension shift $\Sigma^{r}$ that appears in the homotopy type, while the integer $s$ records the amount by which the variable $n$ changes in the corresponding step.

\section*{Acknowledgements}
Himanshu Chandrakar gratefully acknowledges the assistance provided by the Council of Scientific and Industrial Research (CSIR), India, through grant 09/1237(15675)/2022-EMR-I. 

\nocite{}
\printbibliography 
    
\end{document}